 \newtheorem{thm}{Theorem}[section]
 \newtheorem{lem}[thm]{Lemma}
 \newtheorem{prop}[thm]{Proposition}
 \newtheorem{defn}[thm]{Definition}
 \newtheorem{rem}[thm]{Remark}
 \numberwithin{equation}{section}
\def\R{{\Bbb R}}
\def\la{{\lambda}}
\def\pl{{\partial}}
\def\R{{\mathbb R}}
\def\N{{\Bbb N}}
\def\ep{{\epsilon}}
\def\no{{\nonumber}}
\def\Om{{\Omega}}
\def\bge{\begin{eqnarray}}
\def\bgee{\begin{eqnarray*}}
\def\ege{\end{eqnarray}}
\def\egee{\end{eqnarray*}}
\newcommand{\dd}{{\mathrm{d}}}
\newcommand{\ee}{{\mathrm{e}}}
\newcommand{\dif}[2]{\frac{\dd #1}{\dd #2}}
\newcommand{\I}[1]{{\, \dd #1}}
\newcommand{\Ir}{{\, \dd r}}
\newcommand{\Ix}{{\, \dd x}}
\def\be{\begin{equation}}
\def\ee{\end{equation}}
\def\bse{\begin{subequations}}
\def\ese{\end{subequations}}
\def\bge{\begin{eqnarray}}
\def\bgee{\begin{eqnarray*}}
\def\ege{\end{eqnarray}}
\def\egee{\end{eqnarray*}}
\date{\today}
\begin{document}
\title[A nonlocal problem with Robin boundary conditions]
{A study of a nonlocal problem with Robin boundary conditions arising from MEMS technology}

\author{Ourania Drosinou}
\address{Department of Mathematics, University of Aegean,
Gr-83200 Karlovassi, Samos, Greece
}
\email{rdrosinou@aegean.gr}

\author{Nikos I. Kavallaris}
\address{Department of Mathematical and Physical Sciences, University of Chester, Thornton Science Park,
Pool Lane, Ince, Chester  CH2 4NU, UK}
\email{n.kavallaris@chester.ac.uk}

\author{Christos V. Nikolopoulos}
\address{Department of Mathematics, University of Aegean,
Gr-83200 Karlovassi, Samos, Greece
}

\email{cnikolo@aegean.gr}

\subjclass{Primary 35K55, 35J60; Secondary 74H35, 74G55, 74K15}

\keywords{Electrostatic MEMS, touchdown, quenching, non-local parabolic problems,  Poho\v{z}aev's  identity.}

\date{\today}

\pagenumbering{arabic}


\begin{abstract}
In the current work we study a nonlocal parabolic problem with Robin boundary conditions. The problem arises from the study of an idealized electrically actuated MEMS  (Micro-Electro-Mechanical System)  device, when the ends of the device  are  attached or  pinned to a cantilever. Initially the steady-state problem is investigated estimates of the pull-in voltage are derived.
In particular, a Poho\v{z}aev's type identity is also  obtained which then facilitates the derivation of an estimate of the pull-in voltage for radially symmetric   $N-$dimensional domains. Next  a detailed study of  the time-dependent problem is delivered  and global-in-time as well as quenching results are obtained  for generic and radially symmetric domains. The current work closes with a numerical investigation of the presented nonlocal model via an adaptive numerical method. Various numerical experiments are presented,  verifying the previously derived  analytical results as well as providing new insights on the qualitative behaviour of the studied nonlocal model.
\end{abstract}

\maketitle
\vspace{0.5in}

 \section{Introduction}
In this work we study the following nonlocal parabolic problem:
\bse\label{o.oneN1}
 \be\label{o.one1N}
u_t =\Delta u+ \frac{\lambda}{\left(1-u\right)^2 \left[1+ \alpha\int_{\Omega} 1/(1-u\,) dx\right]^{2}},\quad
 \quad \mbox{in} \quad Q_T:=\Om \times (0,T),\;T>0,
 \ee
 \be \label{o.one2N}
\frac{\partial u}{ \partial \nu}+ \beta u=0, \quad \mbox{on} \quad \Gamma_T:=\pl\Om \times (0,T),
  \ee
  \be\label{o.one3N}
 u(x,0)=u_0(x), \quad x \in \Omega,
   \ee \ese
where $\lambda>0$, $\alpha>0$, $\beta>0$,  are given positive constants. Especially, $\la$ is proportional to the applied voltage into the system, called pull-in voltage parameter, and it is actually the controlling parameter for the operation of the considered MEMS device. The initial data
$u_0(x)$ is assumed to be a smooth function such that $0<u_0(x)< 1$ for all $x \in \bar{\Omega}$ and $\frac{\partial u_0}{ \partial \nu}+ \beta u_0=0,\;\;\mbox{for}\;\;x \in \ \partial \Omega;$ here  $\nu=\nu(x)$ stands for the unit outward normal vector on the boundary of the $N-$dimensional domain $\Om.$  Notably, from the applications point of view only the cases $N=1,2$ are viable, however from the point of view of mathematical analysis cases $N\geq 3$ are also interesting and so they will be investigated.  Moreover,  here $T$ denotes the maximum existence time of solution $u.$

When $\alpha=0$ problem (\ref{o.oneN1}) reduces to the local parabolic problem
\bse\label{o.onel}
 \be\label{o.onel1}
 u_{t}=\Delta u + \frac{\lambda}{ (1-u)^2}  , \quad \mbox{in} \quad Q_T,
 \ee
 \be \label{o.onel2}
\frac{\partial u}{ \partial \nu}+ \beta u=0, \quad \mbox{on} \quad \Gamma_T,
  \ee
  \be\label{o.onel3}
 u(x,0)=u_0(x), \quad x \in \Omega.
   \ee \ese
   It is worth mentioning  that for Robin type  boundary conditions, as the ones considered above for $\beta>0,$  there is a limited study for the local problem, cf. \cite{Guo91}, while to the best of our knowledge no published works dealing  with the nonlocal problem \eqref{o.oneN1} can be found in the literature.  Our motivation for studying  \eqref{o.oneN1} comes from the fact that it  is actually linked with special applications in MEMS industry, as pointed  below. Furthermore, due the imposed Robin-type boundary conditions extra technical difficulties arise compared to the study of the Dirichlet problem, a fact that is indicated through the manuscript.

 Problem \eqref{o.oneN1}  arises as a mathematical model which describes the operation of some electrostatic actuated micro-electro-mechanical systems (MEMS). Those MEMS  systems are precision devices which combine mechanical processes with electrical circuits. MEMS devices range in size from
millimeters down to microns, and involve precision mechanical
components that can be constructed using semiconductor
manufacturing technologies.

In particular, electrostatic actuation is
a popular application of MEMS.
 Various electrostatic actuated MEMS
have been developed and used in a wide variety of devices applied as sensors
and have fluid-mechanical, optical, radio frequency (RF),
data-storage, and biotechnology applications.
 Examples of microdevices of this kind include microphones,
temperature sensors, RF switches, resonators, accelerometers,
micromirrors, micropumps, microvalves,  etc., see for example
\cite{EGG10,JAP-DHB02,y}.

In the sequel a derivation for the nonlocal  model \eqref{o.oneN1}, for the one-dimensional case, is presented  and also
the association of that model with  applications in MEMS industry is explained. The main body of the derivation is standard (see for example \cite{KLNT11, KLNT15, KS18, JAP-DHB02}), however
  in order to justify  the inclusion for the Robin boundary conditions in the model
and for completeness it is presented here as well.
 The modifications  of this  modelling approach are presented  in detail in the next section.

\subsection{Derivation of the model}

 We consider an idealized electrostatiaclly  MEMS device which  consists of an elastic  membrane and a rigid plate placed parallel to each other as it can be seen in Figure \ref{Figmems1}. The membrane has two parallel sides usually attached or  pinned to a cantilever, while the other sides are free.
 Both membrane and plate have width $w$ and length L, and in the undeformed state (for the membrane) the distance between the membrane and the plate is $l.$ We assume here that the gap between the plate and the membrane is small, that is $l\ll L$ and $l\ll w.$  Besides, the area between the elastic mebrane and the rigid plate is occupied by some inviscid material with dielectric constant one, so permittivity is that of free space, $ \epsilon_0$.
 \begin{figure}[h!]
  \centering
\includegraphics[width=.7\textwidth]{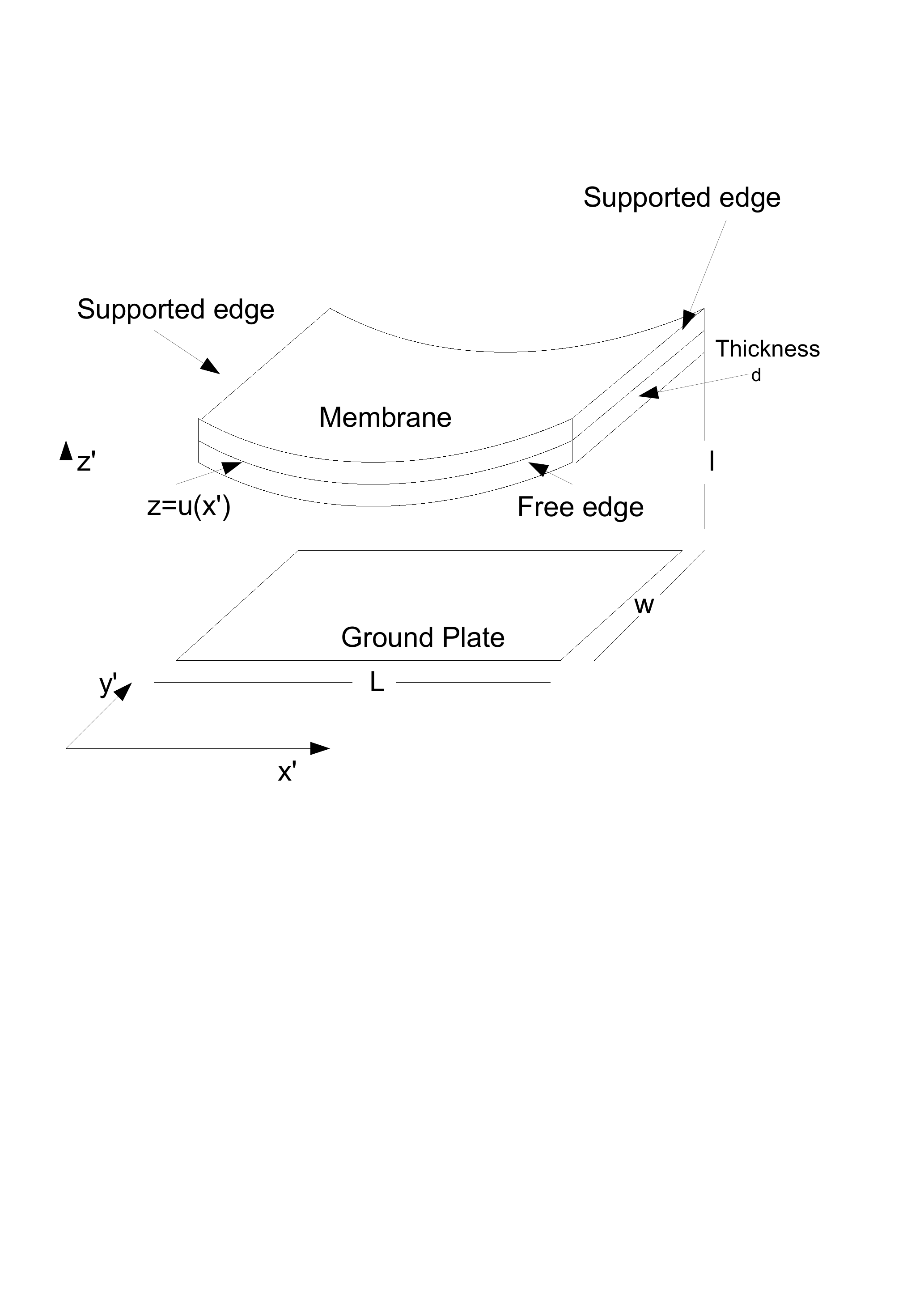}\vspace{-6cm}
  \caption{\it Schematic representation of a MEMS device }
\label{Figmems1}
\end{figure}
 A potential difference $V$  is applied  between the top surface  and the rigid plate and we further assume that the plate is earthed. Besides,  the small aspect ratio of the
gap gives potential
\begin{equation}
\phi = V(l - z')/(l - u'), \label{potential}
\end{equation}
to leading order, where $u'$ is the displacement of the membrane towards the plate ($u' = l$ corresponds to touch-down, i.e. when the top surface touches the rigid plate) and $z'$ is the distance measured from the undisturbed membrane position towards the plate. The electrostatic force per unit area on the membrane (in the $z'$ direction) is
then
\begin{center}
$\frac 12 \times \mbox{surface charge density} \times \mbox{electic field} = \frac 12 \epsilon_0
\phi_{z'}^2 = \frac 12 \epsilon_0 V^2/(l - u')^2 \, ,$
\end{center}
recalling that  $\epsilon_0$ is the permittivity of the free space.

We take the sides of  width $w$, say at  $x'=0$ and $x'=L$, to be
 connected with the support of the device,
  with those of length $L$, say
at $y'=0$ and $y'=w$, to be free. We also assume there is no variation in the $y'$ direction, so $u' =
u'(x',z',t')$ for time $t'.$ The surface density of the membrane is denoted by $\rho$, while
$T_m$ stands for constant surface tension of the membrane. Then its displacement satisfies the forced wave equation with  damping (proportional to the membrane speed),
\begin{equation}
\rho u'_{t't'} + a u'_{t'}= T_m u'_{x'x'} + \frac 12 \epsilon_0 V^2/(l - u')^2 . \label{wave1}
\end{equation}
In many situations is observed  that  the  damping term is dominant compared with the inertia term. According to this ansatz   we get the following parabolic equation
\begin{equation}
 a u'_{t'}= T_m u'_{x'x'} + \frac 12 \epsilon_0 V^2/(l - u')^2 . \label{par1}
\end{equation}
 In addition to the derived equation \eqref{par1}, appropriate boundary conditions should be imposed. The standard way to do so
 is to assume that since the edges of the membrane or beam are fixed at the support of the device, Dirichlet boundary conditions, in the case of the flexible membrane or clamped boundary conditions, in the case of a beam should be considered. Although as it is stated in \cite[Chapter 6]{y}  it is evident that the support or cantilever of MEMS devises might be nonideal and flexible.

 More specifically cantilever microbeams can tilt upward or downward due to the deformation of their support since
 the anchors or supports of them  can have some flexibility making the assumption of perfect clamping inaccurate.
 This  flexibility of the supports of microbeams are accounted for by assuming springs at the beam boundaries and consequently
 modeling  a flexible nonideal support can be done in general by assuming torsional and translational springs at the membrane or beam edge.

 As a first step towards this modelling approach in this work we will assume that we have a device for which its movable upper part   is thin enough,  so that it can be considered to behave as a membrane while its ends are connected with a flexible nonideal  support behaving as a spring moving in the $x'$-direction, see Figure \ref{Figmems2}(a). Torsional or other kind of behaviour is assumed to be negligible at this occasion.

 Therefore according to the above assumptions the appropriate boundary conditions should be those of Robin type and thus we set
  $$u'_{x'} (-L,t')=k u'(-L,t'), \quad u'_{x'} (L,t')=- k u'(L,t'),$$
where $k$ is the spring constant.

\begin{figure}[!htb]\hspace{.5cm}
   \begin{minipage}{0.48\textwidth}
     \centering
     \includegraphics[width=1\linewidth]{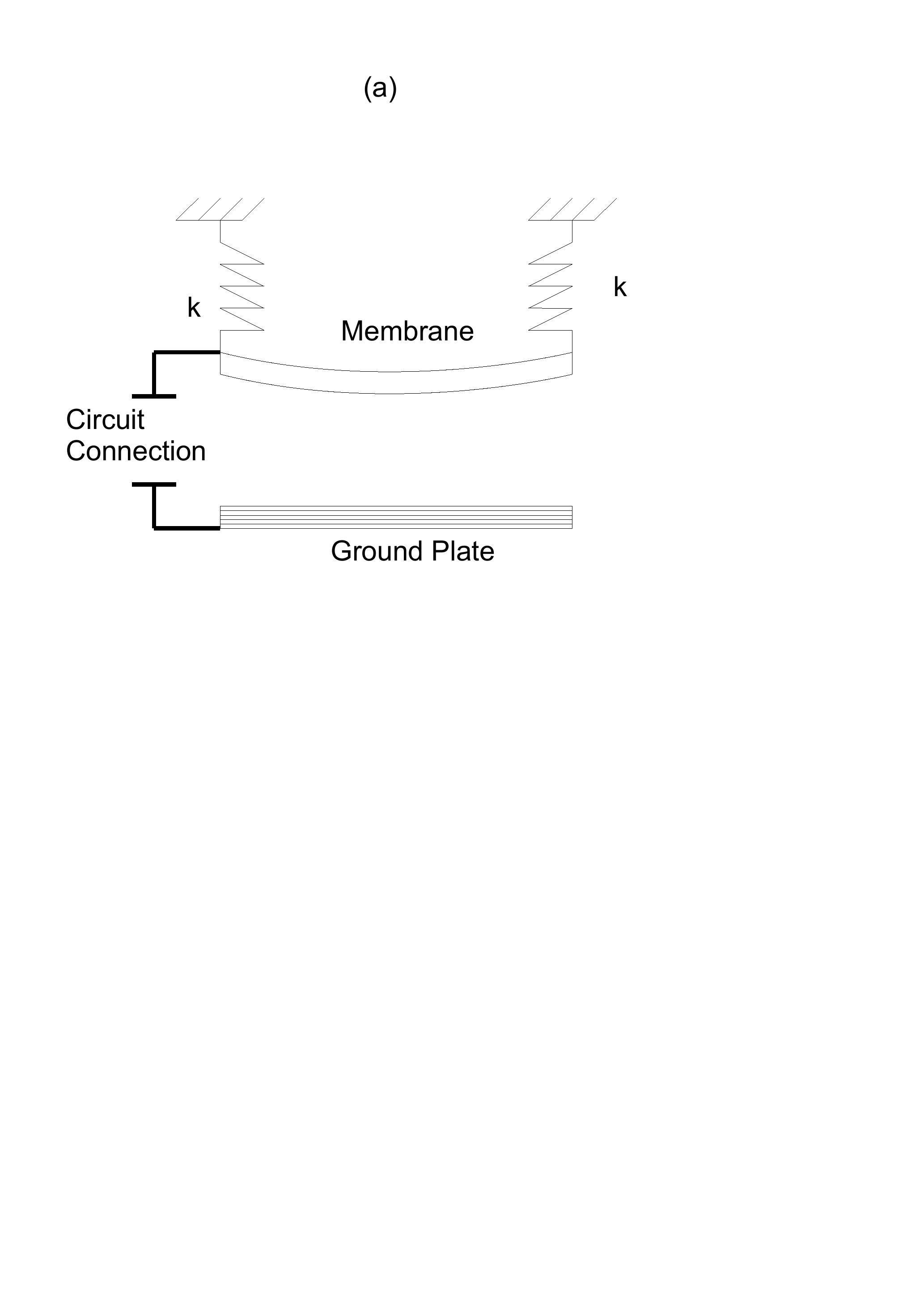}\vspace{-5cm}
   \end{minipage}\hfill
   \begin{minipage}{0.48\textwidth}
     \centering\vspace{-3cm}
     \includegraphics[width=1\linewidth]{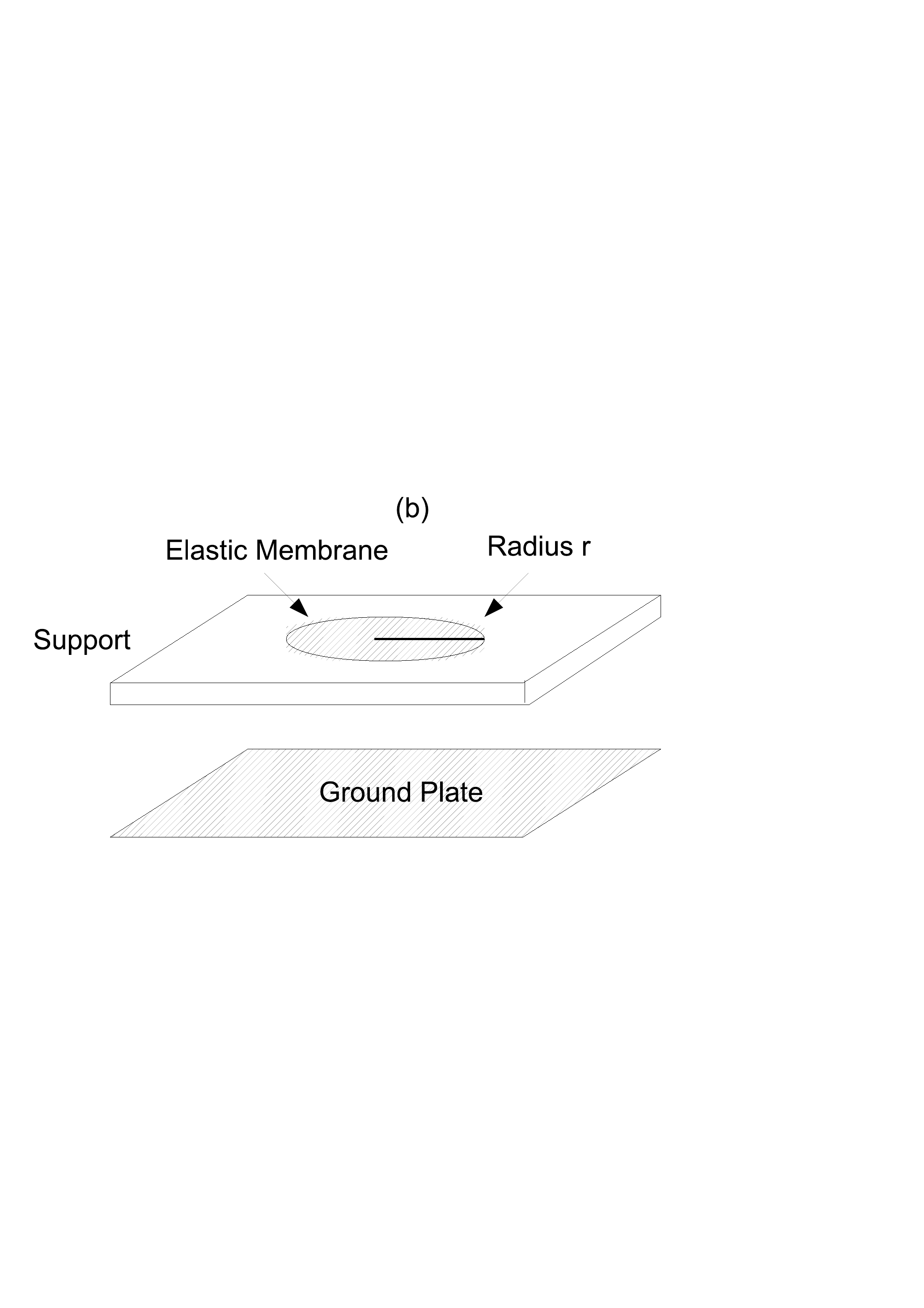}\vspace{-3cm}
   \end{minipage} \caption{(a) Schematic representation of a MEMS device with nonideal support. (b)
   Schematic representation of a MEMS device with radial symmetry}\label{Figmems2}
\end{figure}


 Next by introducing the scaling $u' = lu$, $x' = Lx$, $t'=   \frac{L^2 a}{T_m} t$, we end up with the local equation
\begin{equation}
u_{t} = u_{xx} + (\epsilon_0 V^2 L^2/T_m {2l^3})/(1 - u)^2, \label{par2}
\end{equation}
associated with  the aforementioned boundary conditions and some appropriate initial deformation $0<u(x,0)<1.$ Therefore  we end up in the first place with the following  local  problem:
\bse\label{o.onel1d}
 \be\label{o.one1l}
u_t =u_{xx}+ \frac{\lambda}{\left(1-u\right){^2}},\quad
  -1<x<1, \quad t>0,
 \ee
 \be \label{o.one2l}
u_x(\mp1,t)=\pm \beta u(\mp1,t),  \quad t>0,
  \ee
  \be\label{o.one3l}
 u(x,0)=u_0(x), \quad -1<x<1,
   \ee \ese
 for $\beta=Lk$ and $\lambda=\frac{\epsilon_0 V^2 L^2}{T_m {2l^3}}.$

Since pull-in instability is a
ubiquitous feature of electrostatically actuated systems, many
researchers have focused on extending the stable operation of electrostatically
actuated systems beyond the pull-in regime. In particular, in \cite{SC97a, SC97b}
the basic capacitive control scheme was first proposed by Seeger and Crary
to elaborate this kind of stabilization, see also \cite{CD99}. More precisely, this scheme provides  control of the voltage by the addition of a series capacitance to the circuit containing the MEMS device, since the added capacitance
 acts as a voltage divider. So in the event  the MEMS device, which has a capacitance
$C$ depending on displacement,  is connected in series with a capacitor of fixed capacitance $C_f$ and a source of fixed voltage $V_s$, we have that
 \[ V_s = \frac{Q}{C_c} = Q\left(\frac{1}{C} + \frac{1}{C_f}
\right),\]
where $Q$ is the charge on the device and fixed capacitor, and $C_c$ the series capacitance of the two. Then the potential difference $V$ across the MEMS device, by applying Kirchoff's law is equal to
\begin{equation}
V = \frac{V_s}{1 + C/C_f}.  \label{potdif}
\end{equation}
In addition  we also have
\[
Q = \epsilon_0 \int_0^w \int_0^L \phi_{z'}(x,y,0) \, dx' \,dy' =
 V \frac{wL \epsilon_0}l \int_0^1 \frac 1{1-u} \, dx \, ,
\]
and by using relation (\ref{potential}) we get,
\[
 C = C_0 \int_0^1 \frac 1{1-u} \, dx,
\]
for $C_0 =\frac{wL\epsilon_0}{l}$  being  the capacitance of the undeflected device.

When  the latter relation  is combined with equations (\ref{potdif}) and
(\ref{par2}) we finally obtain the nonlocal problem
\bse\label{o.onenl}
\be\label{o.onenl1}
u_{t} = u_{xx} +  \frac \lambda {(1 - u)^2 \left( 1 + \alpha \int_{-1}^1 \frac 1{1-u} \, dx\right)^2},\; -1<x<1, \quad t>0,
 \ee
 \be \label{o.onenl2}
u_x(\mp1,t)=\pm \beta u(\mp1,t),  \quad t>0,
  \ee
  \be\label{o.onenl3}
 u(x,0)=u_0(x), \quad -1<x<1,
   \ee
\ese
with
$\alpha = \frac{wL\epsilon_0}{l C_f}=\frac{C_0}{C_f}$.

 Usually  it is supposed that the elastic membrane is initially in  its unforced position , so that
$u(x,0)\equiv 0.$ However, in this work, we consider
more general non-negative initial conditions, reflecting also the situation when the membrane has an initial displacement.

It has been experimentally observed  that the applied voltage $V_s$
controls the operation of the MEMS device. Indeed,  when $V_s$ exceeds a critical threshold $V_{cr}$, called the {\it pull-in voltage}, then the phenomenon of
{\it touch-down} (or {\it pull-in instability} as it is also known
in MEMS literature) occurs when the elastic membrane touches
the rigid ground plate. The related mathematical  problem has been studied quite extensively in e.g.
\cite{EGG10, G08, YG-ZP-MJW06, KMS08, KLNT15, KS18,  L89, PT01a, Pelesko2}.

Note that the limiting case $\alpha=0$ corresponds to the configuration
where there is no capacitor in the circuit and then  we end up with the local problem \eqref{o.onel1d}, which has been studied in \cite{Guo91}.  A stochastic version of  problem \eqref{o.onel1d} is treated  in \cite{DKN20, K16}.  Besides,   the local problem with  Dirichlet boundary conditions ($\beta=+\infty$) has been extensively studied among others in \cite{EGG10, G08, KMS08, KS18}. Also,  for hyperbolic modifications of  the variation of  \eqref{o.onel1d} an  interested reader can check \cite{G10,  KLNT15}.

 The quenching behaviour of the  nonlocal equation \eqref{o.one1N}  associated  with Dirichlet boundary  ($\beta=+\infty$)   has been  treated in \cite{KLN16} and in references therein as well as in  \cite{GHW08, GN12, H11}. Also, non-local alterations of  parabolic and hyperbolic problems arising in MEMS technology were tackled  in \cite{ DZ19, GHW08, GKWY20, GN12, KLNT11, KLN16, KS18}. However  to the best of our knowledge there are not similar studies available in the literature  for the Robin problem ($0<\beta<+\infty,$)  so in the current work we study problem  \eqref{o.oneN1} and we extend  some of the results given in  \cite{Guo91} for the local problem,  but we also deliver a further investigation related to the steady-state problem and the quenching behaviour of the time-dependent problem. Our mathematical analysis is inspired by ideas  developed in  \cite{GN12, KLN16}, however important modifications are necessary due to the Robin boundary conditions. In particular, a new  Poho\v{z}aev's type identity for Robin boundary conditions is derived which is then used to derive lower estimates of the pull-in voltage. Moreover, a novel argument,  see Theorem \ref{thm:bound}, is developed  to derive an upper estimate of the quenching rate; note that such  a reasoning is missing  from the approach used in \cite{KLN16}. Still, the derivation of a key estimate for the nonlocal term, analogous to the one derived in \cite[Lemma 3.3 ]{KLN16} for the Dirichlet problem, needs more work for Robin problem \eqref{o.oneN1} and it is finally derived under some extra restriction,  cf. Lemma \ref{lem2}.

 The organization of the paper is as follows. In section~\ref{ss}
a thorough study of the steady-state problem is delivered, where among other results some estimates of the supremum of its spectrum (pull-in voltage) are derived.  Uniqueness and local-in-time existence results for  time-dependent  problem \eqref{o.oneN1}  are discussed in the first part of section~\ref{rnc5}. The second part of section~\ref{rnc5} deals with the long-time behaviour of  the solutions of   \eqref{o.oneN1}. In particular, at first  a quenching result is obtained for a genericl domain, whilst a sharper quenching result is derived  for a radially symmetric domain later on. A numerical treatment of  \eqref{o.oneN1} via an adaptive method is presented in section~\ref{nap}. We thus numerically verify all the obtained analytical results as well as we determine the quenching profile which cannot be derived via our theoretical approach. We conclude with a discussion of our main results in section ~\ref{dsc}.

\section{Steady-State Problem: estimates of the pull-in voltage}\label{ss}
The main purpose of the current section is to study the steady-state problem of  \eqref{o.oneN1}. In particular, we are interested in obtaining estimates of the supremum of its spectrum (pull-in voltage) whilst in the one-dimensional case we are also  able to derive  the form of its bifurcation diagram.
\subsection{The one-dimensional case}\label{ss1}
Below we provide  a thorough investigation of the steady-state problem in the one-dimensional case. In particular we study the structure of the solution set of
\bse\label{nss}
\bge\label{nss1}
&&w''+ \frac{\la}{(1-w)^2 \left[1+\alpha\int_{-1}^1 \frac{\dd x}{1-w}\right]^2}=0, \;\;-1<x<1,\\
&&w'(-1)-\beta w(-1)=0,\quad w'(1)+\beta w(1)=0, \label{nss2}
 \ege
\ese
where we always have $0\leq w <1$ in $[-1,\,1]$
for a (classical) solution of \eqref{nss}.

For convenience  we set $W = 1 - w$ and then (\ref{nss}) becomes
\bse\label{ssW}
\bge
&&W''=\frac {\mu}{W^2} \, ,\quad -1<x<1,\label{ssWa}\\
&&W'(-1)+\beta\left (1-W(-1)\right)=0  \, ,\quad W'(1)-\beta\left (1-W(1)\right)=0  \, ,
\ege
\ese
where
\begin{equation}\label{dim}
\mu =\frac {\lambda} {\left[ 1 + \alpha\int_{-1}^{1}\frac{1}{W} dx\right]^{2}}.
\end{equation}
 Note that $W$ is symmetric and thus $m=\min\{W(x), x\in [-1.1]\}=W(0)$, cf. \cite{Gi-Ni-Ni, GKWY20}. Then multiplying both sides of equation (2.2a) by $W'$ and integrating from $m=W(0)$
 to $W(x)=W$ we derive
\[
\int_{0}^{W'} W' dW' = \int_{0}^x W'' W'dx =
\mu \int_{0}^x \frac{W'}{W^2} \, dx = \mu
\int_m^W \frac{dW}{W^2} \, ,
\]
hence
\begin{eqnarray}\label{eqW_x}
\frac12 {(W')}^2=\mu\left(\frac{1}{m}-\frac{1}{W}\right).
\end{eqnarray}
This gives equivalently
\begin{eqnarray}\label{eqdxdW}
\frac{dx}{dW} = \sqrt{\frac{m}{2\mu}}\sqrt{\frac{W}{W-m}} \, ,
\end{eqnarray}
which implies  (see ~\cite{GKWY20, KLNT11})
\be\nonumber
x= \sqrt{\frac{m}{2\mu}}
\left[ \sqrt{W(W-m)} - \frac12m\ln(m) + m\ln \left( \sqrt{W} +\sqrt{W-m} \right) \right] \, .
\ee
 Additionally at the point $x=1$
and for  $W(1)=M:=\max\{W(x),\;x\in [-1,1]\} $  we deduce
\be \label{eq-mM1}
1= \sqrt{\frac{m}{2\mu}}
\left[ \sqrt{M(M-m)} - \frac12m\ln(m) + m\ln \left( \sqrt{M} +\sqrt{M-m} \right) \right] \, .
\ee
Moreover combining the boundary condition,  $W'(1)=\beta\left (1-W(1)\right) $,  with
equation \eqref{eqW_x} we obtain
\be\label{eq-mM2}
\frac{\beta^2(1-M)^2}{2}=\mu\left(\frac{1}{m}-\frac{1}{M}\right).
\ee

At this point, recalling that for $\alpha=0$ we have $\mu=\lambda,$ we can obtain the bifurcation diagram of the local
problem. More specifically rearranging \eqref{eq-mM2}, we have
\be \label{bifurc_l2}
m=\frac{2\lambda M}{2\lambda +M\beta^2 (1-M)^2},
\ee
which together with \eqref{eq-mM1}, for $\mu=\lambda$, namely
\be \label{bifurc_l1}
1= \sqrt{\frac{m}{2\lambda}}
\left[ \sqrt{M(M-m)} - \frac12m\ln(m) + m\ln \left( \sqrt{M} +\sqrt{M-m} \right) \right] ,
\ee
 forms a system of algebraic equations giving an implicit relation of the form $F(\lambda, M)=0$.

Furthermore in order to obtain the bifurcation diagram for the nonlocal problem ($\alpha>0$) we have to express the integral of the nonlocal term in terms of $\lambda, m, M$.

That is, on using equation  \eqref{eqdxdW}
\begin{eqnarray*}
&&\int_{-1}^1 \frac{1}{W}dx
 =  \int_{-1}^1\frac{dx}{dW} \frac{dW}{W} = 2
\sqrt{\frac{m}{2\mu}} \int_m^M\frac{1}{\sqrt{W(W-m)}}dW \nonumber \\
& = &2
\frac{1}{ \sqrt{M(M-m)} - \frac12m\ln(m) + m\ln \left( \sqrt{M} +\sqrt{M-m} \right)  }
\int_m^1\frac{1} {\sqrt{W(W-m)}} dW
\nonumber \\
& = & 2
\frac{1}{ \sqrt{M(M-m)} - \frac12m\ln(m) + m\ln \left( \sqrt{M} +\sqrt{M-m} \right)  }
 \ln \left(\frac{2M - m + 2\sqrt{M(M-m)}}{m} \right).
\end{eqnarray*}

Therefore, using also   \eqref{dim}, \eqref{eq-mM2} to eliminate $\mu$,  we obtain the following system of algebraic equations for $\lambda$, $M$, $m$:
\bse
\be\label{gk1}
1= \sqrt{\frac{m}{2\mu}}
\left[ \sqrt{M(M-m)} - \frac12m\ln(m) + m\ln \left( \sqrt{M} +\sqrt{M-m} \right) \right],
\ee
\be\label{gk2}
\frac{\beta^2(M-1)^2}{2}\frac{mM}{M-m}=
\lambda
{\left[1+
\alpha \frac{2 \ln \left(\frac{2M - m + 2\sqrt{M(M-m)}}{m} \right)}
{ \sqrt{M(M-m)} - \frac12 m\ln(m) + m\ln \left( \sqrt{M} +\sqrt{M-m}\right) }\right]^{-2}},
\ee
\ese
together with \eqref{dim}, which can be solved numerically.
\begin{rem} Note that  by equation \eqref{eq-mM2} for $\beta \gg 1$ we have $(M-1)\sim 0$ or $M\sim 1$
and we retrive the expression for $\lambda$ and $m$ which gives the bifurcation diagram for the local problem with Dirichlet boundary conditions (see  \cite{KLNT11} ), i.e.
\[
\lambda=\frac{m}{2}\left[\sqrt{1-m}-\frac12m\ln(m)+m\ln \left(1+\sqrt{1-m} \right) \right]^2.
\]
\end{rem}

In Figure \ref{FigLBif_ab}(a) we plot the bifurcation diagram for the stationary  local problem (\ref{nss1}) for $\alpha=0$.  We can observe the existence of a critical value of the parameter $\lambda$, say $\lambda^*,$ usually called the {\it pull-in voltage} in MEMS literature,  above which  we have no solution for the steady problem while for values below $\lambda^*$ we have two solutions. We finally derive that
$\lambda^*=0.108711900526435$
 and for this value we have that the maximum of the solution
 $M=W(1)=0.761$.

Regarding the nonlocal stationary problem, equation (\ref{nss1})  with $\alpha =1$ we present a similar plot of the bifurcation diagram in Figure \ref{FigNLBif_ab}(a) (line indicated with $\alpha=1$).
In this case the critical value of the parameter $\lambda$ is
 $\lambda^*=2.387086785660011$.
 In both of  the above cases the parameter in the boundary conditions is taken to be $\beta=1$.

 In this set of graphs we can see also the variation of the bifurcation diagram of the local problem
  with respect to the parameter $\beta$ in Figure \ref{FigLBif_ab}(b).

\begin{figure}[!htb]\vspace{-4cm}\hspace{-2cm}
   \begin{minipage}{0.45\textwidth}
     \centering
     \includegraphics[width=1.2\linewidth]{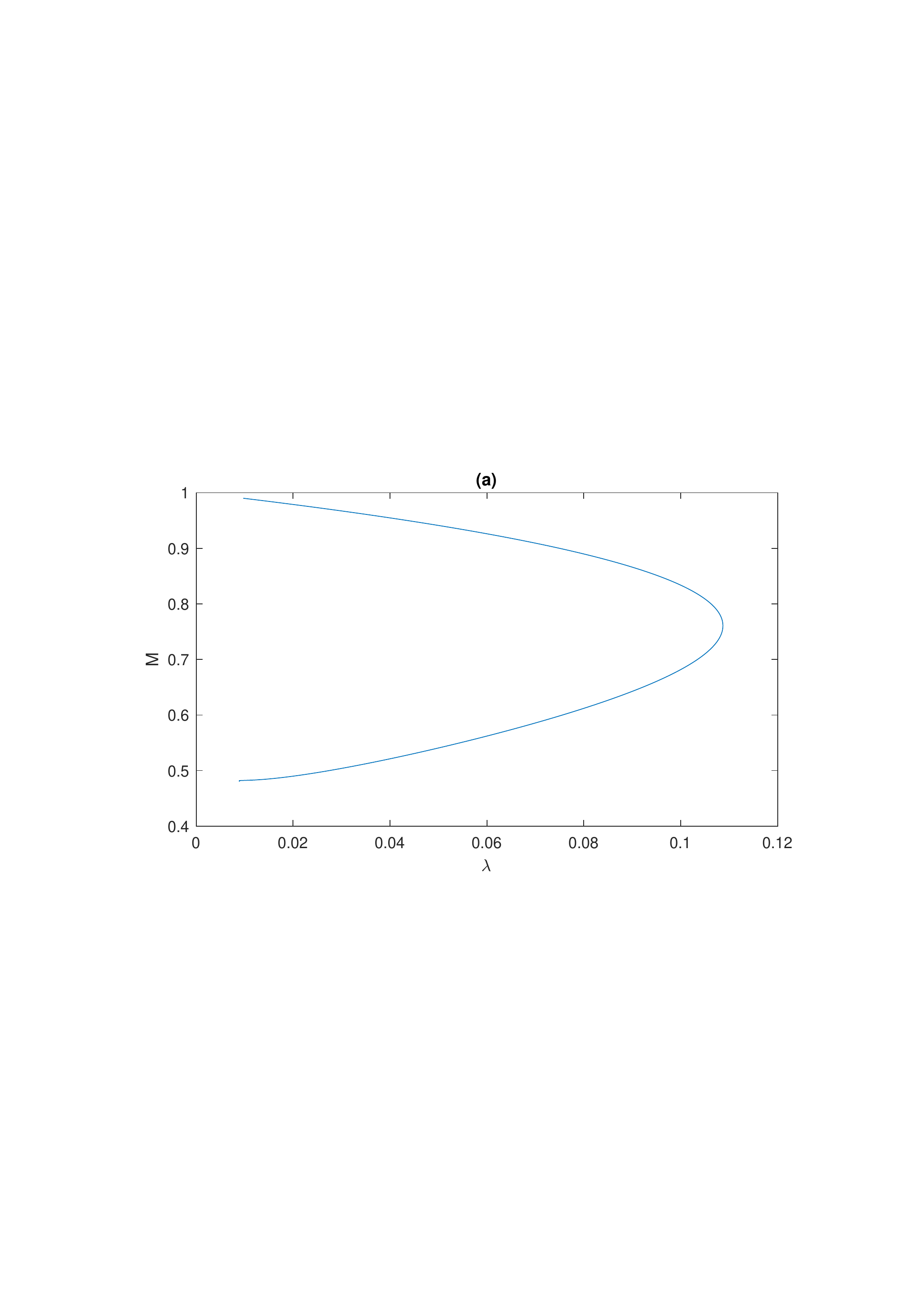}\vspace{1cm}
   \end{minipage}
   \begin{minipage}{0.48\textwidth}
     \centering\vspace{-.8cm}
     \includegraphics[width=1.2\linewidth]{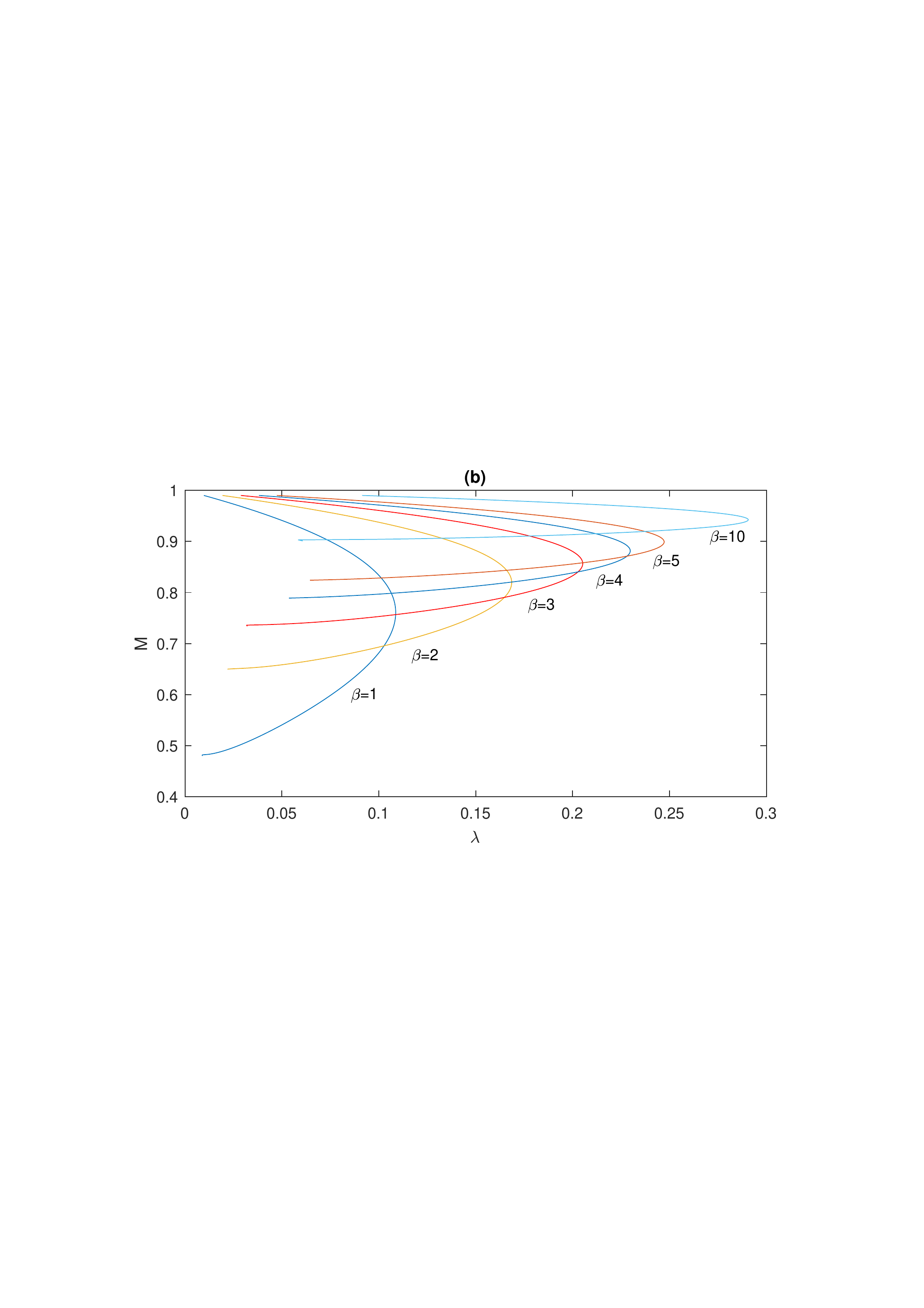}
   \end{minipage} \vspace{-5cm}
   \caption{(a) Bifurcation diagram for the local problem.
   (b)  Variation of the bifurcation diagramm of the local problem with respect to the parameter $\beta$.}\label{FigLBif_ab}
\end{figure}
 A similar graph, see Figure \ref{FigNLBif_ab},  investigates the variation of the bifurcation diagram of the nonlocal problem with respect to the parameter $\alpha$ in Figure \ref{FigNLBif_ab}(a) and with respect to the parameter $\beta$ in Figure \ref{FigNLBif_ab}(b).
\begin{figure}[!htb]\vspace{-4cm}\hspace{-2cm}
   \begin{minipage}{0.48\textwidth}
     \centering
     \includegraphics[width=1.2\linewidth]{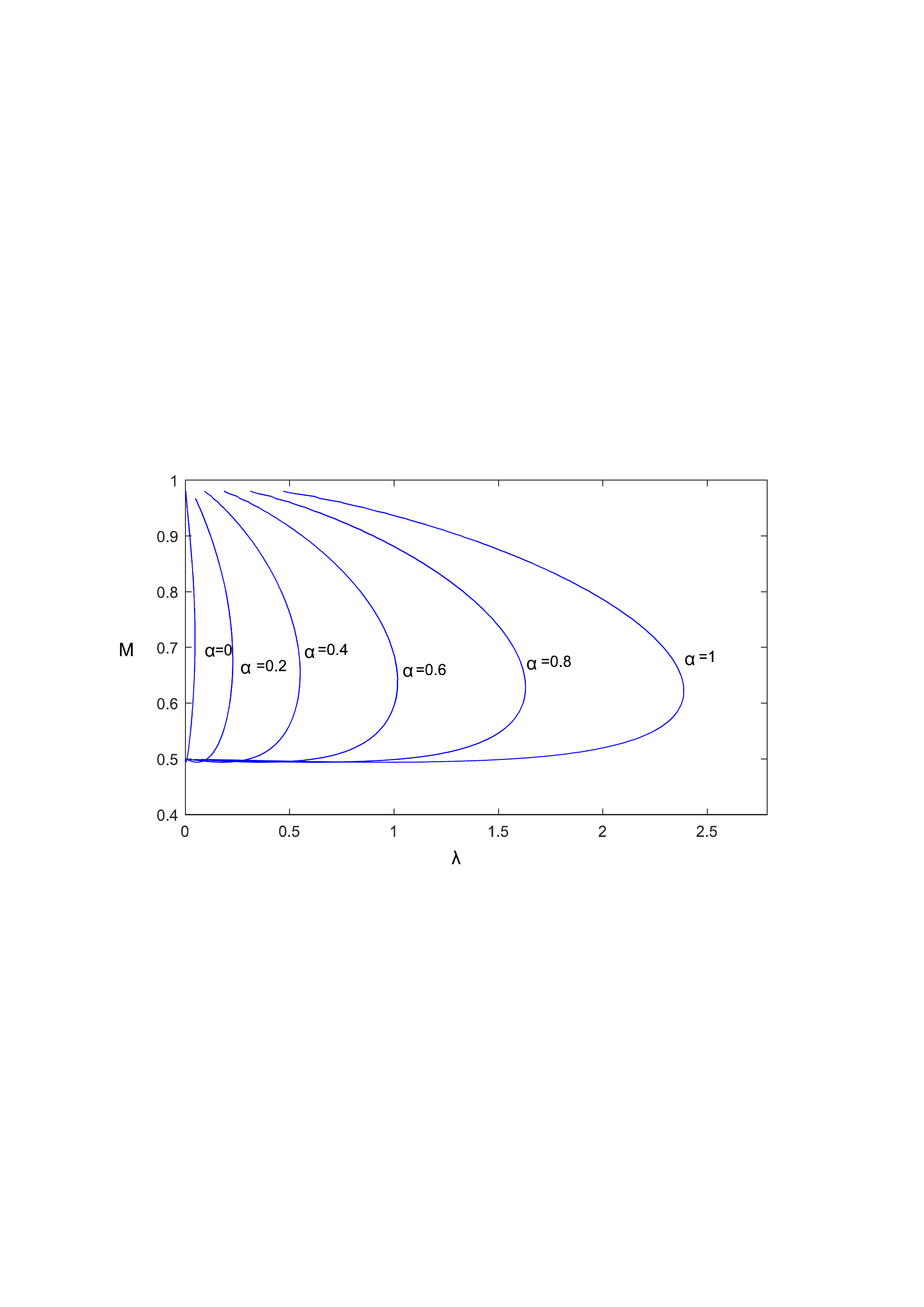}
   \end{minipage}
   \begin{minipage}{0.48\textwidth}
     \centering\hspace{-1cm}
     \includegraphics[width=1.2\linewidth]{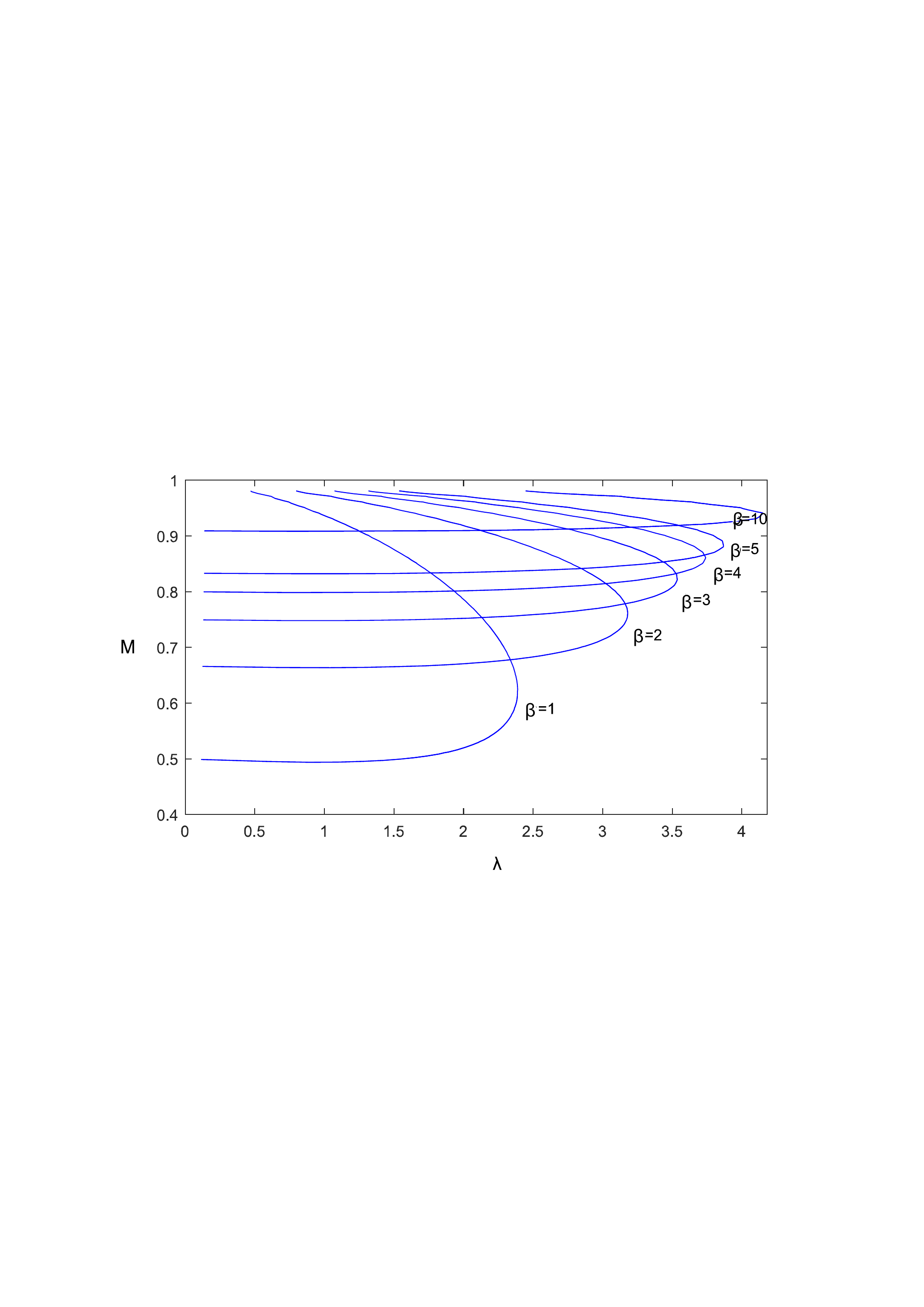}
   \end{minipage} \vspace{-4cm}
   \caption{(a) Variation of the bifurcation diagramm of the nonlocal problem with respect to the parameter $\alpha$  for $\beta=1$. (b)
  Variation of the bifurcation diagramm of the nonlocal problem with respect to the parameter $\beta$ for
   $\alpha=1$.}\label{FigNLBif_ab}
\end{figure}

  For a rigorous bifurcation analysis of  nonlocal problem \eqref{nss}  we kindly advice the reader to check \cite{GKWY20}.
\subsection{The higher dimensional case}

In this part  we study the steady-state problem of  the $N$-dimensional version  of \eqref{o.oneN1} for $N>1.$
In particular we  perform an investigation of the set of classical solutions $0\leq w=w(x) < 1$ in $\bar{\Omega},$  satisfying the nonlocal problem
\bse\label{ssN}
\bge \label{ssN1}
&&\Delta w + \frac{\lambda }{(1-w)^{2} \left(1+ \alpha \int_{\Omega} \frac{1}{1-w} dx\right)^2} = 0 , \ x \in \Omega\subset \mathbb{R}^N,\; N\geq 1,\\
&&
 \frac{\partial w}{\partial \nu} + \beta w=0 ,\; x \in \partial \Omega.\label{ssN2}
\ege
\ese
In the  following  we denote
\begin{eqnarray}
\label{3}
\lambda^*:= \sup \{\lambda>0 :\ \mbox{problem \ (\ref{ssN})\ has a classical solution}\},
\end{eqnarray}
and we recall that $\la^*$ in MEMS terminology is called {\it pull-in voltage}.
By setting
\begin{equation}
\label{28}
\mu:= \frac{\lambda}{K}= \frac{\lambda}{(1+\alpha  \int_{\Omega} \frac{1}{1-w} dx)^2},
\end{equation}
where
\bge\label{ak1}
K=K(w):=\left(1+ \alpha \int_{\Omega} \frac{1}{1-w} dx\right)^2,
\ege
then (\ref{ssN}) can be written as a local problem
\bse \label{29}
\bge
&&\Delta w+ \frac{\mu}{(1-w)^2}=0,\; x \in \Omega,\label{29a}\\
&&\frac{\partial w}{\partial \nu} +\beta w=0,\; x \in \partial \Omega, \label{29b}
\ege
\ese
and we also define
\begin{eqnarray}
\label{3A}
\mu^*:= \sup \{\mu>0 :\ \mbox{problem \ (\ref{29})\ has a classical solution}\}.
\end{eqnarray}
It is readily seen that problems (\ref{ssN}) and (\ref{29}) are equivalent via  relation (\ref{28}). More specifically $w$ is a solution of (\ref{ssN}) corresponding to $\lambda$ if and only if $w$ satisfies (\ref{29}) for $\mu$ given by (\ref{28}).

 Next we introduce the notion of weak solution for the problem (\ref{ssN}) which will be used in an essential  way to our approach (cf. \cite{KLN16}) towards the study of the quenching (touching down)  phenomenon.
\begin{defn}\label{wes}
 A function $w \in H_0 ^1 (\Omega)$ is called weak finite-energy solution of (\ref{ssN}) if there exists a sequence
 $\{w_j\}_{j=1} ^ {\infty}\in C^2(\Omega)\cap C(\Omega) $ satisfying as $j \rightarrow \infty$

\bse\label{def_weak}
\bge
&&w_j \rightarrow w \ \mbox{weakly} \ in \ H^1 (\Omega), \label{4}\\
&&w_j \rightarrow w\quad\mbox{a.e.},  \label{5}\\
&&\frac{1}{(1-w_j)^2}\rightarrow \frac{1}{(1-w)^2}\;\;\mbox{in}\;\; L^1(\Omega), \label{6}\\
&&\frac{1}{(1-w_j)}\rightarrow \frac{1}{(1-w)} \;\;\mbox{in}\;\; L^1(\Omega), \label{7}\\
&&\Delta w_j + \frac{\lambda}{(1-w_j)^2 (1+ \alpha\int_{\Omega} \frac{dx}{1-w_j})^2} \rightarrow 0 \;\;\mbox{in}\;\;  \ L^2 (\Omega). \label{8}
\ege
\ese
 A weak finite-energy solution of (\ref{ssN}) satisfies
$$- \int_{\Omega} \nabla \phi \cdot \nabla w\, dx
+\int_{\partial \Om} \phi\, \frac{\pl w}{\pl \nu} \,ds +
\lambda \frac{\int_{\Omega} \frac{\phi}{(1-w)^2} dx}{(1+ \alpha\int_{\Omega}\frac{1}{1-w} dx)^2} =0,$$
for any $\phi\in W^{2,2}(\Om)$ satisfying $\frac{\partial \phi}{ \partial \nu}+ \beta \phi=0$ on $\pl \Omega.$

We also denote
\begin{eqnarray*}
\hat{\lambda}:= \sup \{\lambda>0 :\ \mbox{problem \ (\ref{ssN})\ has a weak finite-energy  solution}\}.
\end{eqnarray*}
\end{defn}
In addition and in accordance to \cite[Proposition 2.2]{KLN16} we have the following:
\begin{prop}\label{cn3}
For the radial symmetric case, i.e. when $\Omega=B_1(0):=\{x\in \R^N: |x|<1\},$ the suprema of the spectra for classical and weak energy solutions are identical.  In particular,
 $\lambda^*= \hat{\lambda}.$
\end{prop}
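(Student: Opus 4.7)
The plan is to establish the two inequalities $\lambda^*\leq\hat\lambda$ and $\hat\lambda\leq\lambda^*$ separately, following the Dirichlet-case strategy of \cite[Proposition 2.2]{KLN16} adapted to the Robin boundary condition~\eqref{ssN2}. The easy direction $\lambda^*\leq\hat\lambda$ is immediate, because every classical solution $w$ of \eqref{ssN} is trivially a weak finite-energy solution via the constant approximating sequence $w_j\equiv w$ in Definition~\ref{wes}. The substantive content is the reverse inequality $\hat\lambda\leq\lambda^*$, which I would prove by showing that every weak finite-energy solution at a parameter $\lambda<\hat\lambda$ is actually classical, so that $\lambda\leq\lambda^*$ and the claim follows on taking the supremum over such $\lambda$.

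Fix $\lambda<\hat\lambda$ and let $w$ be a weak finite-energy solution of \eqref{ssN} at $\lambda$, with approximating sequence $\{w_j\}$ as in Definition~\ref{wes}. Since $\Omega=B_1(0)$ and the Robin condition~\eqref{ssN2} is rotationally invariant, a moving-plane argument applied to each $w_j$ and transferred to the limit via \eqref{4}--\eqref{5} yields that $w$ is radially symmetric and radially decreasing, so $\|w\|_\infty=w(0)$ and $w$ satisfies, for $0<r<1$, the radial equation
\[
\bigl(r^{N-1}w_r\bigr)_r+\frac{\lambda\,r^{N-1}}{(1-w)^{2}K(w)}=0,\qquad w_r(0)=0,\quad w_r(1)+\beta w(1)=0,
\]
with $K(w)$ as in \eqref{ak1}. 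The convergences \eqref{6}--\eqref{8} combined with standard ODE regularity then automatically promote $w$ to a $C^2$ solution on every compact subinterval of $(0,1]$, so the only possible obstruction to classicality is the value $w(0)=1$.

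To rule out $w(0)=1$ whenever $\lambda<\hat\lambda$, I would pick $\lambda_1\in(\lambda,\hat\lambda)$ with an associated weak finite-energy solution $w_1$ and use $w_1$ as a supersolution, $\underline w\equiv 0$ as a subsolution, for the monotone iteration scheme
\[
-\Delta w_{n+1}=\frac{\lambda}{(1-w_n)^{2}\,K(w_1)}\;\;\text{in}\;\;\Omega,\qquad \partial_\nu w_{n+1}+\beta w_{n+1}=0\;\;\text{on}\;\;\partial\Omega,
\]
starting from $w_0\equiv 0$. Because $\lambda<\lambda_1$ and the pointwise nonlinearity $1/(1-\cdot)^2$ is monotone increasing, the Robin maximum principle (valid since $\beta>0$) gives $0\le w_n\le w_{n+1}\le w_1$ for every $n$, so the limit $\tilde w\le w_1$ is a classical solution of the local Robin problem~\eqref{29} at parameter $\lambda/K(w_1)$. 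A second continuation step, updating the frozen coefficient $K(w_1)$ towards $K(\tilde w)$ along a homotopy and exploiting the $L^1$-continuity supplied by~\eqref{7}, then produces a classical solution of the full nonlocal problem~\eqref{ssN} at $\lambda$, bounded away from $1$ by a quantitative gap $\|w\|_\infty\le 1-\delta$ with $\delta=\delta(\lambda,\lambda_1)>0$; this shows $\lambda\leq\lambda^*$.

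The main obstacle I anticipate is adapting the moving-plane method to the Robin condition: in the Dirichlet setting of \cite{KLN16} the trace of $w$ on $\partial\Omega$ vanishes identically, which simplifies the comparison between $w$ and its reflection across a hyperplane, whereas here the trace is only controlled through $\partial_\nu w+\beta w=0$, and one must verify that the reflected function satisfies a compatible Robin condition on the reflected spherical cap. The positive sign $\beta>0$, together with the rotational invariance of $\partial B_1$, makes the required maximum-principle step go through, but the boundary contribution must be tracked carefully. A secondary, more technical difficulty is the consistency of the nonlocal coefficient along the two-stage iteration above, which is handled via the monotonicity of $1/K$ in $w$ and the strict gap $\lambda<\lambda_1$.
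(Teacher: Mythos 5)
Your first direction $\lambda^*\le\hat\lambda$ is fine. The second direction breaks down at precisely the point where you assert a ``quantitative gap $\|w\|_\infty\le 1-\delta$''. Pinning the monotone iterates between $\underline w\equiv 0$ and the weak barrier $w_1$ only yields $\tilde w\le w_1$; but a weak finite-energy solution $w_1$ at $\lambda_1<\hat\lambda$ is exactly the kind of object that may satisfy $\sup w_1=1$ (these singular solutions are what make $\hat\lambda$ and $\lambda^*$ potentially different in the first place). The domination $\tilde w\le w_1$ therefore gives no $\delta>0$, and the subsequent ``continuation step'' updating the frozen nonlocal coefficient $K$ cannot manufacture a gap that the barrier $w_1$ does not itself enjoy. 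This is the genuine missing idea, and your moving-plane preliminary, besides being delicate for weak solutions under Robin data, does not address it.

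The device used in \cite[Proposition 2.2]{KLN16} and its Dirichlet sources is a \emph{concave regularizing change of unknown}, which is absent from your argument. Set $\bar w_1=\psi(w_1)$ where $\psi\colon[0,1]\to[0,1]$ is increasing and concave with $\psi(0)=0$, chosen so that $\psi'(s)\,(1-\psi(s))^2=c\,(1-s)^2$ for some $c\in(0,1)$; integrating gives $(1-\psi(s))^3=1-c+c(1-s)^3$, hence $\psi(1)=1-(1-c)^{1/3}<1$, so $\bar w_1$ is \emph{uniformly} below $1$. Concavity gives $-\Delta\bar w_1\ge\psi'(w_1)\,(-\Delta w_1)$ in the distributional sense, and the ODE defining $\psi$ turns the weak supersolution inequality at $\mu_1=\lambda_1/K(w_1)$ into one at $c\,\mu_1$, making $\bar w_1$ a bounded classical supersolution of the local problem \eqref{29} for every $\mu<\mu_1$. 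The Robin-specific point that you should isolate, since it is the only place the boundary operator enters and it is what differs from \cite{KLN16}, is that $\partial_\nu\bar w_1+\beta\bar w_1=\beta\bigl[\psi(w_1)-w_1\,\psi'(w_1)\bigr]\ge 0$, a consequence of concavity and $\psi(0)=0$: the tangent-line inequality $\psi(0)\le\psi(w)-w\,\psi'(w)$ gives the needed sign for $\beta>0$. With the regularized barrier $\bar w_1$ in hand, monotone iteration from $0$ produces a classical minimal solution at every $\mu<\mu_1$, and the dictionary \eqref{28} between $\mu$ and $\lambda$ transfers this back to \eqref{ssN} to give $\lambda\le\lambda^*$ and hence $\hat\lambda\le\lambda^*$. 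Note that this transformation imposes no symmetry on the weak barrier; radial symmetry of the domain is not what rescues the supersolution step.
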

The proof of Proposition \ref{cn3} follows closely the proof of \cite[Proposition 2.2]{KLN16} and so it is omitted.

Next we show that $\mu^*$ defined by \eqref{3A} is well defined and bounded.  More precisely,
\begin{lem}\label{30a}
There exists a finite $\mu^*$ defined by \eqref{3A} such that
\begin{enumerate}
\item If $\mu<\mu^*$ then problem \eqref{29} has at least one (classical) solution.
\item If $\mu>\mu^*$ then problem \eqref{29} has no (classical) solution.
\end{enumerate}
\end{lem}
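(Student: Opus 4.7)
My plan is to establish the three assertions of the lemma in sequence: nonemptiness of the defining set, finiteness of $\mu^*$, and the existence statement (i); the nonexistence statement (ii) will then be immediate from the definition of $\mu^*$ as a supremum. The two main tools are a sub/super-solution construction and a Rayleigh-type test against the principal Robin eigenfunction.

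First, to see that the set in (2.13) is nonempty, observe that $\underline w\equiv 0$ is a subsolution of (2.10) for every $\mu>0$ (it satisfies the Robin condition trivially and $\Delta 0+\mu(1-0)^{-2}=\mu>0$). For a supersolution I would introduce the auxiliary function $\psi\in C^2(\bar{\Omega})$ solving $-\Delta\psi=1$ in $\Om$ with $\pl_\nu\psi+\beta\psi=0$ on $\pl\Om$; the Hopf lemma combined with $\beta>0$ yields $\psi>0$ on $\bar{\Omega}$, so $M:=\|\psi\|_\infty$ is positive and finite. Setting $\bar w=\delta\psi$ with $\delta=1/(2M)$ gives $0\leq\bar w\leq 1/2$, and a direct computation gives $\Delta\bar w+\mu(1-\bar w)^{-2}\leq -\delta+4\mu$, which is nonpositive as soon as $\mu\leq\delta/4=1/(8M)$. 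For any such $\mu$, $\bar w$ is a classical supersolution and standard monotone iteration in $[\underline w,\bar w]$ produces a classical solution of (2.10); hence the set is nonempty and $\mu^*>0$.

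Next, to prove $\mu^*<\infty$, let $(\mu_1,\phi_1)$ denote the principal Robin eigenpair of $-\Delta$, i.e. $-\Delta\phi_1=\mu_1\phi_1$ in $\Om$, $\pl_\nu\phi_1+\beta\phi_1=0$ on $\pl\Om$, normalized with $\phi_1>0$ (and $\mu_1>0$ because $\beta>0$). For any classical solution $w$ of (2.10), testing (2.10a) against $\phi_1$ and integrating by parts, the matching Robin conditions make the boundary contributions $\int_{\pl\Om}(\phi_1\pl_\nu w-w\pl_\nu\phi_1)\,\dd s$ cancel identically, leaving
\[
\mu_1\int_\Om w\,\phi_1\,\dd x\;=\;\mu\int_\Om\frac{\phi_1}{(1-w)^2}\,\dd x.
\]
The tangent-line estimate $(1-w)^{-2}\geq 1+2w$ on $[0,1)$, together with $w<1$ and hence $\int_\Om w\phi_1\,\dd x\leq\int_\Om\phi_1\,\dd x$, then forces $\mu\leq\mu_1/3$. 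Therefore $\mu^*\leq\mu_1/3<\infty$.

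Finally, for (i), fix $\mu\in(0,\mu^*)$. By the definition of supremum there exists $\mu'\in(\mu,\mu^*)$ for which (2.10) admits a classical solution $w'$. Since $\mu<\mu'$ and $w'<1$ on $\bar{\Omega}$, one has
\[
\Delta w'+\mu(1-w')^{-2}=(\mu-\mu')(1-w')^{-2}<0,
\]
so $w'$ is a classical supersolution of (2.10) at parameter $\mu$. Combined with the subsolution $\underline w\equiv 0$ and the ordering $0\leq w'<1$ on $\bar{\Omega}$, monotone iteration---at each step solving a linear Robin problem whose unique classical solvability is standard Schauder theory---produces a classical solution of (2.10) at $\mu$. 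Assertion (ii) is immediate from the definition of $\mu^*$ as a supremum. The only real technical obstacle is the rigorous execution of the monotone iteration with Robin conditions: one must verify that successive iterates remain in $[0,w']$ so the nonlinearity $(1-w)^{-2}$ stays bounded, and that the strong maximum principle and Hopf lemma adapted to Robin problems propagate the required monotonicity and ordering; these points follow from the uniform positivity of $1-w'$ on $\bar{\Omega}$ together with standard $C^{2,\alpha}$ Schauder estimates for the linearized Robin operator.
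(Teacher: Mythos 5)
Your proof is correct and its skeleton coincides with the paper's: both establish finiteness by testing equation \eqref{29a} against the principal Robin eigenfunction and both obtain statement (i) by comparing with the subsolution $0$ and a supersolution borrowed from a larger parameter, with (ii) read off the supremum. The one genuinely different ingredient is your nonemptiness argument. Where the paper invokes the implicit function theorem to produce a branch of solutions bifurcating from $w\equiv 0$ at $\mu=0$, you build an explicit supersolution $\bar w=\delta\psi$ from the Robin torsion function $\psi$ (solving $-\Delta\psi=1$ with $\partial_\nu\psi+\beta\psi=0$), use Hopf's lemma and $\beta>0$ to get $\psi>0$ on $\bar\Omega$, and close with monotone iteration. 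This is more elementary and self-contained---it avoids the functional-analytic setup needed to apply the implicit function theorem near a singular right-hand side---and it also yields a concrete lower bound $1/(8\|\psi\|_\infty)$ on $\mu^*$ that the paper's argument does not make explicit. In the finiteness step you sharpen the paper's crude estimate $(1-w)^{-2}\ge 1$ to the tangent-line bound $(1-w)^{-2}\ge 1+2w$, which improves the paper's bound $\mu^*\le\lambda_1$ to $\mu^*\le\lambda_1/3$; both are of course sufficient for the lemma. One cosmetic remark: you denote the principal Robin eigenvalue by $\mu_1$, which the paper calls $\lambda_1$; since $\mu$ is already in use as the bifurcation parameter, the paper's choice is less likely to mislead a reader.
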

\begin{proof}
We first establish the existence of $\mu^*$ defined by \eqref{3A}.
Indeed, implicit function theorem implies that problem \eqref{29} has a solution bifurcating from the trivial solution $w=0$ at $\mu=0.$ This solution is positive due to the maximum principle, hence  $\mu^*$ is well-defined and positive.

Next we prove the  boundedness of $\mu^*.$ Let $(\lambda_1,\phi_1(x))$ be the principal normalized eigenpair of the  Laplacian associated with Robin boundary conditions, i.e. $\phi_1(x)$ satisfies
\bge \label{egv}
-\Delta \phi_1= \lambda_1 \phi_1, x \in \Omega,\quad \frac{\partial \phi_1}{\partial \nu}+ \beta \phi_1=0 ,\; x \in \partial \Omega,
\ege
with
\bge\label{ik1}
\int_{\Omega} \phi_1\,dx=1.
\ege
It is known (see, for example, \cite[Theorem 4.3]{Am76}) that $\lambda_1$ is positive and that $\phi_1(x)$ does not change sign in $\Omega$, so  by condition \eqref{ik1}  is positive.

Testing \eqref{29a} by $\phi_1(x)$ and using second Green's identity in conjunction with \eqref{ik1} we obtain for any calssical solution $w$
\bgee
\lambda_1\int_{\Omega} w \phi_1\,dx=\mu\int_{\Omega} \frac{\phi_1}{(1-w)^2}\,dx\geq \mu.
\egee
 The latter inequality, by virtue of \eqref{3A}, implies
\bgee
\mu^*\leq \lambda_1\int_{\Omega} w \phi_1\,dx\leq \lambda_1<\infty,
\egee
and so $\mu^*$ is finite.

Next we focus on  proving  statement $(i).$ We   pick $\mu\in(0,\mu^*),$ then thanks to the definition of $\mu^*$ there exists $\bar{\mu}\in (\mu,\mu^*)$ such that the minimal solution $w_{\bar{\mu}}$  (i.e. the smallest solution corresponding  parameter $\bar{\mu}$) of \eqref{29} satisfies
\bgee
&&-\Delta w_{\bar{\mu}}=\frac{\bar{\mu}}{(1-w_{\bar{\mu}})^2}\geq\frac{\mu}{(1-w_{\bar{\mu}})^2},\; x \in \Omega,\\
&&\frac{\partial w_{\bar{\mu}}}{\partial \nu} +\beta w_{\bar{\mu}}=0,\; x \in \partial \Omega,
\egee
since $\bar{\mu}>\mu.$ The latter implies that $w_{\bar{\mu}}$ is an upper solution of \eqref{29} corresponding to parameter $\mu.$ Additionally, it is easily seen that $w\equiv0$ is a lower solution of \eqref{29} corresponding to  $\mu.$ Consequently by using comparison arguments, cf. \cite{Pao92}, we can construct a solution of \eqref{29} corresponding to parameter $\mu,$ and this completes the proof of $(i).$ On the other hand, by the definition of $\mu^*$ we deduce that problem \eqref{29} has no solution for $\mu>\mu^*$ and statement $(ii)$ is also proven.
\end{proof}
Next we prove the monotonicity of minimal (stable) branch of problem \eqref{29} with respect to (local) parameter $\mu.$

\begin{lem} \label{30}
Let $\mu_1,\mu_2 \in (0,\mu^*).$ Assume that  $w_{\mu_1}$ and $w_{\mu_2}$ are the corresponding minimal solutions of problem \eqref{29}, then
\begin{equation} \label{31}
0<w_{\mu_1}(x)<w_{\mu_2} (x)<1 \quad for \quad x \in \Omega, \mbox{ if }\ 0<\mu_1< \mu_2 < \mu^*.
\end{equation}
\end{lem}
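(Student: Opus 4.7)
The plan is to establish the ordering in two stages: first the non-strict inequality $w_{\mu_1}\le w_{\mu_2}$ via monotone iteration (which is essentially built into the definition of the minimal solution used in Lemma \ref{30a}), and then upgrade to strict inequality plus positivity by invoking the strong maximum principle adapted to Robin boundary conditions.

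First I would exploit the monotonicity of the nonlinearity $f(w)=1/(1-w)^2$. Since $w_{\mu_2}<1$ classically solves \eqref{29} with parameter $\mu_2>\mu_1$,
\[
-\Delta w_{\mu_2}=\frac{\mu_2}{(1-w_{\mu_2})^2}\ge \frac{\mu_1}{(1-w_{\mu_2})^2},\qquad \frac{\partial w_{\mu_2}}{\partial\nu}+\beta w_{\mu_2}=0,
\]
so $w_{\mu_2}$ is an upper solution of the $\mu_1$–problem. Trivially $w\equiv 0$ is a lower solution (Robin boundary condition is satisfied with equality and $-\Delta 0=0\le \mu_1/(1-0)^2$). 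The standard monotone iteration scheme of \cite{Pao92}, started from $0$ and using $w_{\mu_2}$ as the a priori upper barrier, produces a classical solution $\tilde{w}_{\mu_1}$ of the $\mu_1$–problem with $0\le \tilde{w}_{\mu_1}\le w_{\mu_2}$. Because $w_{\mu_1}$ is by definition the minimal solution at level $\mu_1$, we get $w_{\mu_1}\le \tilde{w}_{\mu_1}\le w_{\mu_2}$ pointwise.

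Next I would promote this to strict inequality. Setting $v:=w_{\mu_2}-w_{\mu_1}\ge 0$, a direct computation together with $w_{\mu_2}\ge w_{\mu_1}$ gives
\[
-\Delta v=\frac{\mu_2}{(1-w_{\mu_2})^2}-\frac{\mu_1}{(1-w_{\mu_1})^2}\ge \frac{\mu_2-\mu_1}{(1-w_{\mu_1})^2}>0\quad\text{in }\Omega,
\]
and $\partial_\nu v+\beta v=0$ on $\partial\Omega$. If $v$ vanished at an interior point it would attain an interior minimum there, forcing $-\Delta v\le 0$ at that point, contradicting the strict positivity above; hence $v>0$ in $\Omega$, which yields $w_{\mu_1}<w_{\mu_2}$. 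For the lower bound, $-\Delta w_{\mu_1}=\mu_1/(1-w_{\mu_1})^2>0$ rules out an interior minimum with value $\le 0$; if such a minimum were attained at a boundary point $x_0$, Hopf's lemma would give $\partial_\nu w_{\mu_1}(x_0)<0$, while the Robin condition forces $\partial_\nu w_{\mu_1}(x_0)=-\beta w_{\mu_1}(x_0)\ge 0$, a contradiction. Thus $w_{\mu_1}>0$ in $\Omega$, and $w_{\mu_2}<1$ holds by hypothesis.

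The main subtle point — and the only place where the Robin condition genuinely enters — is the Hopf-type argument needed to exclude a non-positive boundary minimum; everything else is a routine application of the sub/supersolution framework already employed in the proof of Lemma \ref{30a}. No further work is required on the nonlocal term because the local reformulation \eqref{29} has absorbed it into the parameter $\mu$.
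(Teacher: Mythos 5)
Your proof is correct, and it takes a genuinely different route from the paper's. The paper assumes the minimal branch $w(\cdot;\mu)$ is differentiable in $\mu$ (citing \cite{EGG10, GN12}), sets $z=\partial w/\partial\mu$, and applies a maximum principle to $-\Delta z - 2\mu(1-w)^{-3}z=(1-w)^{-2}>0$. Because the zeroth-order coefficient $-2\mu(1-w)^{-3}$ has the unfavourable sign, that step actually rests on the stability of the minimal branch (nonnegativity of the principal eigenvalue of the linearized operator) rather than merely on the boundedness of $(1-w)^{-3}$ stated there. Your argument needs neither the differentiability-in-$\mu$ input nor the spectral one: the non-strict ordering $w_{\mu_1}\le w_{\mu_2}$ falls out of the same sub/supersolution iteration already used in Lemma~\ref{30a}, and since
\[
-\Delta(w_{\mu_2}-w_{\mu_1}) \ \ge\ \frac{\mu_2-\mu_1}{(1-w_{\mu_1})^2}\ >\ 0 ,
\]
strictness and interior positivity follow from the ordinary strong maximum principle together with Hopf's lemma at the Robin boundary, with no sign issue to circumvent. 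Your version is more elementary and self-contained; the paper's is shorter once one is willing to cite the regularity and stability of the minimal branch. (One cosmetic remark: for the conclusion $w_{\mu_1}>0$ in $\Omega$ the interior part of your argument already suffices, since $w_{\mu_1}\ge 0$ and strict superharmonicity preclude an interior zero; the Hopf step is only needed if you also want positivity up to $\partial\Omega$.)
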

\begin{proof}
It is known, cf. \cite{EGG10, GN12}, that $w(x;\mu)$ is differentiable with respect to $\mu\in(0,\mu^*).$ Set $z=\frac{\partial w}{\partial\mu}$ then differentiating  \eqref{29a} with respect to $\mu$ we derive
$$-\Delta z  - \frac{2\mu}{(1-w)^3}z=\frac{1}{(1-w)^2}>0.$$
In addition due to the  boundary conditions we have similarly $\frac{\partial z}{\partial \nu} + \beta z> 0.$
 Therefore by the maximum principle, since $(1-w)^{-3}$ is bounded for a classical solution, cf.  \cite{Ev}, we obtain that $z> 0,$ that is $\frac{\partial w}{\partial\mu}> 0.$
\end{proof}
Using the preceding monotonicity result we can also prove, as in  \cite{GN12}, the following.
\begin{thm}\label{psks1}
There exists a classical  solution to  problem \eqref{ssN} for any
$\lambda \in (0,\,(1+\alpha\vert \Omega \vert )^2 \mu^*)$ and therefore
\bge\label{aal1}
\lambda^*\geq
   \sup_{(0,\mu^*)}\mu K(w_{\mu})\geq (1+\alpha\vert \Omega \vert )^2 \mu^*,
\ege
    where $\mu^*$ is defined by \eqref{3A}   and recall that $K(w_{\mu})=\left(1+ \alpha \int_{\Omega} \frac{1}{1-w_{\mu}} dx\right)^2.$
\end{thm}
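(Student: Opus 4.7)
The plan is to exploit the equivalence, noted just before Lemma \ref{30a}, between the nonlocal problem \eqref{ssN} and the local problem \eqref{29} via the substitution $\mu = \lambda/K(w)$. Parameterising the minimal branch of \eqref{29} by $\mu \in (0,\mu^*)$ and writing $w_\mu$ for the corresponding minimal classical solution produced by Lemma \ref{30a}(i), I introduce
\bgee
\Lambda(\mu) := \mu\,K(w_\mu)=\mu\left(1 + \alpha \int_{\Omega} \frac{1}{1-w_\mu}\,dx\right)^{2},\qquad \mu \in (0,\mu^*).
\egee
Each $w_\mu$ automatically solves \eqref{ssN} classically with parameter $\lambda = \Lambda(\mu)$, so the range of $\Lambda$ lies inside the spectrum of \eqref{ssN} and hence $\lambda^* \geq \sup_{(0,\mu^*)}\Lambda$. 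Since $w_\mu \geq 0$ (by the maximum principle, as $w \equiv 0$ is a subsolution of \eqref{29} and the minimal branch is obtained by iterating upward from $0$), one has $K(w_\mu) \geq (1+\alpha|\Omega|)^{2}$, whence $\Lambda(\mu) \geq (1+\alpha|\Omega|)^{2}\mu$; letting $\mu \uparrow \mu^*$ yields the rightmost inequality $\sup\Lambda \geq (1+\alpha|\Omega|)^{2}\mu^*$.

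For the existence of a classical solution of \eqref{ssN} at every $\lambda \in (0,(1+\alpha|\Omega|)^{2}\mu^*)$, I would apply the intermediate value theorem to $\Lambda$. First I verify continuity of $\Lambda$ on $(0,\mu^*)$: Lemma \ref{30} gives $w_\mu$ strictly increasing and differentiable in $\mu$; on any compact subinterval $[\mu_1,\mu_2]\subset(0,\mu^*)$ the uniform estimate $w_\mu \leq w_{\mu_2} < 1$ on $\bar\Omega$ dominates $(1-w_\mu)^{-1}$ by the $L^{1}(\Omega)$ function $(1-w_{\mu_2})^{-1}$, so dominated convergence makes $\mu \mapsto \int_{\Omega}(1-w_\mu)^{-1}\,dx$ continuous, whence so are $K(w_\mu)$ and $\Lambda$; note that the same monotonicity of the two factors $\mu$ and $K(w_\mu)$ in fact renders $\Lambda$ strictly increasing.

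Next I would analyse the endpoint behaviour: the implicit function theorem applied at the trivial solution at $\mu=0$ (as in the proof of Lemma \ref{30a}, where the Robin Laplacian is invertible) gives $w_\mu \to 0$ uniformly as $\mu \to 0^+$, so $K(w_\mu) \to (1+\alpha|\Omega|)^{2}$ and $\Lambda(\mu) \to 0$; at the other end $\Lambda(\mu) \geq (1+\alpha|\Omega|)^{2}\mu$ and the right-hand side approaches $(1+\alpha|\Omega|)^{2}\mu^*$ as $\mu \uparrow \mu^*$. For any prescribed $\lambda_0 \in (0,(1+\alpha|\Omega|)^{2}\mu^*)$ I then select $\tilde\mu \in \bigl(\lambda_0/(1+\alpha|\Omega|)^{2},\mu^*\bigr)$ so that $\Lambda(\tilde\mu) \geq (1+\alpha|\Omega|)^{2}\tilde\mu > \lambda_0$, and the intermediate value theorem yields $\mu_0 \in (0,\tilde\mu)$ with $\Lambda(\mu_0)=\lambda_0$; the function $w_{\mu_0}$ is the desired classical solution of \eqref{ssN} at parameter $\lambda_0$. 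The main technical point is precisely the uniform sub-unity bound $w_\mu \leq w_{\mu_2} < 1$ on $\bar\Omega$ that legitimates dominated convergence, which is already encoded in the strict ordering of Lemma \ref{30} combined with the fact that every $w_{\mu_2}$ with $\mu_2 < \mu^*$ is a bona fide classical solution and hence bounded strictly below $1$ on $\bar\Omega$.
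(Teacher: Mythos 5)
Your proof is correct, but it follows a route that is genuinely different from the paper's. The paper, for a prescribed $\lambda\in(0,(1+\alpha|\Omega|)^2\mu^*)$, sets $\mu=\lambda/(1+\alpha|\Omega|)^2<\mu^*$ and takes the minimal solution $w_\mu$ of the local problem \eqref{29}; this $w_\mu$ solves the \emph{nonlocal} problem \eqref{ssN} at $\lambda_1:=\mu K(w_\mu)>\lambda$ (since $K(w_\mu)>K(0)=(1+\alpha|\Omega|)^2$ by Lemma~\ref{30}), which already gives $\lambda^*\geq(1+\alpha|\Omega|)^2\mu^*$, and the classical solution of \eqref{ssN} at $\lambda$ itself is obtained because $-\Delta w_\mu=\mu/(1-w_\mu)^2=\lambda/\bigl((1+\alpha|\Omega|)^2(1-w_\mu)^2\bigr)$ makes $w_\mu$ an upper solution for the steady-state nonlocal problem at $\lambda$ paired with the lower solution $v\equiv0$, so the monotone iteration scheme (cf.\ Proposition~\ref{26}, \cite{AKH11,Pao92}) hands back a minimal nonlocal solution $w_\lambda\leq w_\mu$. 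You instead parametrize the minimal local branch and apply the intermediate value theorem to $\Lambda(\mu)=\mu K(w_\mu)$; this is legitimate, but it requires the continuity of $\mu\mapsto w_\mu$ in a sense strong enough to pass to the limit in $\int_\Omega(1-w_\mu)^{-1}dx$, which you verify by dominated convergence using the parametric differentiability in Lemma~\ref{30} and the uniform bound $w_\mu\leq w_{\mu_2}<1$ on compacta. The trade-off: the paper's comparison argument is shorter and gives the minimal solution of \eqref{ssN} directly (used again in the global-existence theorem), whereas your IVT route spells out the branch continuity that the paper leaves tacit behind its final ``and thus'' step and produces the solution as $w_{\mu_0}$ on the parametrised local branch without directly claiming minimality for the nonlocal problem.
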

\begin{proof}
By virtue of (\ref{31}) we have
\bge\label{ikk1a}
(1+\alpha\vert \Omega \vert )^2 = K(0)<  K(w_{\mu}),\quad if \quad 0<\mu < \mu^*.
\ege
Next, for any $\lambda \in (0,\,(1+\alpha\vert \Omega \vert) ^2 \mu^*)$ there is a unique $\mu \in (0,\mu^*)$ such that
\begin{equation} \label{32}
\mu = \frac{\lambda}{(1+\alpha\vert \Omega \vert) ^2},
\end{equation}
and hence there is a minimal solution $w_{\mu}$ for local problem (\ref{29}). Since  problems (\ref{ssN}) and (\ref{29}) are equivalent through  \eqref{28}, there exists  $\la_1\in (0,\la^*)$ with
\bge\label{ikk2}
\mu=\frac{\la_1}{K(w_\mu)}.
\ege
Therefore \eqref{32} and \eqref{ikk2} in conjunction with \eqref{ikk1a} imply  that $0<\la<\la_1<\la^*$ and thus nonlocal problem \eqref{ssN} has at least one (minimal) solution  $w_{\la}.$ This completes the proof.
\end{proof}
\begin{rem}
One can derive lower estimates of $\mu^*$ as in the case of Dirichlet boundary conditions, cf. \cite[Proposition 2.2.2]{EGG10}, and thus via \eqref{aal1} can finally obtain lower estimates of the pull-in voltage $\la^*.$
\end{rem}
Next we provide a more delicate  lower estimate of $\la^*$  in the case of the  $N-$dimensional sphere, i.e. when
\bgee
\Omega=B_R=B_R(0)=:\{x\in \R^N: |x|<R\},\;\mbox{for}\;R>0.
\egee
Such a radial symmetric case is rather of high importance from applications point of view as it is indicated in \cite{Ac69, PC03, T68}.
In order to prove such a lower estimate of $\la^*$  we need to use a
Poho\v{z}aev's type identity, cf. \cite{Poh}, for the following problem

 \bse\label{mkt1}
 \bge
&&\Delta v + \mu f(v)=0, \quad  x \in \Omega,\; \mu>0,\label{mkt1a}\\
&&\frac{\partial v }{\partial \nu }+ \beta v=0, \quad for \quad x \in \partial \Omega.\label{mkt1b}
 \ege
 \ese
Since to the best of our knowledge such an identity is not available in the literature for problem  \eqref{mkt1a}-\eqref{mkt1b},  we provide a proof  of it  below.
\begin{prop}\label{propPoh}
Let $f:\R\to \R$ be continuous with antiderivative  $F(v):= \int_0 ^v f(s) ds.$  Assume that $\Omega \subset \R^N$ is open and bounded. If $v\in C^2(\bar{\Omega})$ is a smooth solution of problem \eqref{mkt1} then the following identity holds
\bge
\label{33}
\frac{\mu (N-2)}{2} \int _\Omega v f(v) dx-\mu N \int _\Omega F(v) dx  &=&\frac{(N-2)}{2\beta}
 \int_ {\partial \Omega}\left(\frac{\partial v}{\partial \nu(x)}\right)^2 dS
 +\frac{1}{2} \int_{\pl \Omega} |\nabla v|^2\, \left\langle \nu(x), x\right\rangle\;dS\no\\
&&- \int_{\pl \Omega} \frac{\partial v}{\partial \nu(x)} \left\langle \nabla v, x\right\rangle\,dS
-\mu \int_{\pl \Omega} F(v) \frac{\partial }{\partial \nu(x)}\left(\frac{1}{2} |x|^2\right)\,dS,\qquad\quad
\ege

where $\left\langle \cdot,\cdot\right\rangle$ stands for the dot (inner) product in the Euclidean space $\R^N.$
\end{prop}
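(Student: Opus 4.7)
The strategy is the classical Pohožaev multiplier argument: test \eqref{mkt1a} against $\langle x,\nabla v\rangle$ and integrate by parts, then use a separate energy identity (testing against $v$) to eliminate the interior term $\int_\Omega|\nabla v|^2\,dx$ that would otherwise survive. The Robin condition enters only at the last stage, where it is used to convert the boundary trace of $v$ into a trace of $\partial v/\partial\nu$.

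\textbf{Step 1 (multiplier identity).} I will multiply \eqref{mkt1a} by $\langle x,\nabla v\rangle$ and integrate over $\Omega$. For the Laplacian term, the standard computation
\[
\mathrm{div}\!\bigl(\langle x,\nabla v\rangle\,\nabla v\bigr)=(\Delta v)\langle x,\nabla v\rangle+|\nabla v|^2+\tfrac12\langle x,\nabla|\nabla v|^2\rangle,
\]
combined with $\mathrm{div}\,x=N$ and the divergence theorem, gives
\[
\int_\Omega(\Delta v)\langle x,\nabla v\rangle\,dx
=\int_{\partial\Omega}\frac{\partial v}{\partial\nu}\langle\nabla v,x\rangle\,dS-\Bigl(1-\tfrac{N}{2}\Bigr)\int_\Omega|\nabla v|^2\,dx-\tfrac12\int_{\partial\Omega}|\nabla v|^2\langle\nu,x\rangle\,dS.
\]
For the nonlinear term I will write $f(v)\langle x,\nabla v\rangle=\langle x,\nabla F(v)\rangle$ and integrate by parts once more to obtain $-N\int_\Omega F(v)\,dx+\int_{\partial\Omega}F(v)\langle\nu,x\rangle\,dS$. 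Adding $\mu$ times this to the Laplacian identity yields a preliminary Pohožaev relation still containing the bulk term $\int_\Omega|\nabla v|^2\,dx$.

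\textbf{Step 2 (energy identity, where Robin enters).} To remove $\int_\Omega|\nabla v|^2\,dx$, I will test \eqref{mkt1a} against $v$ itself and integrate by parts:
\[
\int_\Omega|\nabla v|^2\,dx=\int_{\partial\Omega}v\,\frac{\partial v}{\partial\nu}\,dS+\mu\int_\Omega v\,f(v)\,dx.
\]
Now the Robin boundary condition \eqref{mkt1b} gives $v=-\beta^{-1}\partial v/\partial\nu$ on $\partial\Omega$, so
\[
\int_\Omega|\nabla v|^2\,dx=-\frac{1}{\beta}\int_{\partial\Omega}\Bigl(\frac{\partial v}{\partial\nu}\Bigr)^{\!2}dS+\mu\int_\Omega v\,f(v)\,dx.
\]

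\textbf{Step 3 (combination).} Substituting this expression for $\int_\Omega|\nabla v|^2\,dx$ into the relation from Step~1, the coefficient $-\bigl(1-N/2\bigr)=\frac{N-2}{2}$ converts the boundary integral $\frac{1}{\beta}\int_{\partial\Omega}(\partial v/\partial\nu)^2$ into the desired term $\frac{N-2}{2\beta}\int_{\partial\Omega}(\partial v/\partial\nu)^2\,dS$ and produces the interior term $\frac{\mu(N-2)}{2}\int_\Omega v f(v)\,dx$. Recognising finally that $\langle\nu,x\rangle=\frac{\partial}{\partial\nu}\bigl(\tfrac12|x|^2\bigr)$ in the last boundary integral yields exactly \eqref{33} after rearrangement.

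\textbf{Main obstacle.} The genuinely new point compared with the Dirichlet case of \cite{Poh} is the appearance of the bulk term $(1-N/2)\int_\Omega|\nabla v|^2\,dx$: under Dirichlet data the boundary traces of the tangential components of $\nabla v$ vanish and the identity can be closed without a secondary energy estimate, but here the Robin trace $v\!\!\upharpoonright_{\partial\Omega}$ is nonzero and the tangential gradient on $\partial\Omega$ does not vanish. The whole point of Step~2 is to exploit the \emph{linear} Robin relation between $v$ and $\partial v/\partial\nu$ to convert $\int_\Omega|\nabla v|^2\,dx$ into a pure boundary integral of $(\partial v/\partial\nu)^2$; keeping track of the signs produced by $-\bigl(1-N/2\bigr)$ and $-1/\beta$ is where an arithmetic mistake is easiest, so I will carry those constants carefully through the substitution.
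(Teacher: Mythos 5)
Your proof is correct and follows essentially the same route as the paper: test against $\langle x,\nabla v\rangle$, integrate by parts to produce the boundary terms and the bulk term $(1-N/2)\int_\Omega|\nabla v|^2\,dx$, eliminate the latter via the energy identity from testing against $v$, and then invoke the Robin condition to convert $\int_{\partial\Omega}v\,\partial_\nu v\,dS$ into $-\beta^{-1}\int_{\partial\Omega}(\partial_\nu v)^2\,dS$. One small inaccuracy in your closing remark: the secondary energy identity is in fact also needed in the Dirichlet case to remove $\int_\Omega|\nabla v|^2\,dx$ (there it gives $\int_\Omega|\nabla v|^2\,dx=\mu\int_\Omega vf(v)\,dx$ because the boundary trace of $v$ vanishes); what the Robin condition genuinely changes is that this boundary trace is now proportional to $\partial_\nu v$ rather than zero, producing the extra term $\frac{N-2}{2\beta}\int_{\partial\Omega}(\partial_\nu v)^2\,dS$, and that the other boundary integrals no longer collapse because the tangential component of $\nabla v$ and the value $F(v)$ on $\partial\Omega$ survive.
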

\begin{proof}
We first multiply \eqref{mkt1a} by $\left\langle x, \nabla v\right\rangle$ and integrate over $\Omega$ to derive

\bge\label{mkt2}
-\int_{\Omega} \Delta v \left\langle x, \nabla v\right\rangle\,dx=
\mu \int_{\Omega} f(v)\,\left\langle x, \nabla v\right\rangle\,dx.
\ege

The LHS of \eqref{mkt2} via integration by parts   gives
\bge\label{mkt3}
-\int_{\Omega} \Delta v \left\langle x, \nabla v\right\rangle\, dx=
\int_{\Om} \left\langle \nabla v, \nabla\left\langle x, \nabla v\right\rangle\right\rangle\, dx-
\int_{\pl \Om} \frac{\partial v}{\partial \nu(x)}\left\langle  x, \nabla v\right\rangle\, dS.
\ege
Now since
\bgee
\left\langle \nabla v, \nabla\left\langle x, \nabla v\right\rangle\right\rangle&&=\frac{1}{2}\left\langle \nabla\left(|\nabla v|^2\right), x\right\rangle+|\nabla v|^2\\&&=\frac{1}{2}\left\langle \nabla\left(|\nabla v|^2\right), \nabla\left(\frac{1}{2} |x|^2\right)\right\rangle+|\nabla v|^2,
\egee
using again integration by parts we obtain
\bge\label{mkt4}
\int_{\Om} \left\langle \nabla v, \left\langle x, \nabla v\right\rangle\right\rangle\, dx&=&
\frac{1}{2}\int_{\Om}\left\langle \nabla\left(|\nabla v|^2\right), \nabla\left(\frac{1}{2} |x|^2\right)\right\rangle\, dx+\int_{\Om} |\nabla v|^2\, dx \no\\&=&
\frac{1}{2}\int_{\pl \Om} |\nabla v|^2 \frac{\partial }{\partial \nu(x)}\left(\frac{1}{2} { |x^2|}\right)\, dS
-\frac{N}{2}\int_{\Om} |\nabla v|^2\,dx+\int_{\Om} |\nabla v|^2\, dx \no\\
&=&\frac{2-N}{2} \int_{\Om} |\nabla v|^2\, dx
+\frac{1}{2}\int_{\pl \Om} |\nabla v|^2 \left\langle \nu(x),x\right\rangle\, dS,
\ege
taking also into account that $\Delta\left(\frac{1}{2} |x|^2\right)=N$ for any $x\in \Om.$

Next we estimate the first term on the RHS of \eqref{mkt4} using \eqref{mkt1a}. Indeed multiplying \eqref{mkt1a} by $v,$  integrating over $\Om$ and using integration by parts we deduce
\bge\label{mkt5}
\int_{\Om} |\nabla v|^2\, dx &=&\mu\int_{\Om} vf(v)\, dx
+\int_{\pl \Om} v\frac{\pl v}{\pl {\nu(x)}} dS \no\\
&=&\mu\int_{\Om} vf(v)\, dx
-\frac{1}{\beta}\int_{\pl \Om} \left(\frac{\pl v}{\pl \nu(x)}\right)^2\, dS ,
\ege
where the last equality is a result of  boundary condition \eqref{mkt1b}.

Therefore \eqref{mkt3} in conjunction with \eqref{mkt4} and \eqref{mkt5}  implies
\bge\label{mkt6}
-\int_{\Omega} \Delta v \left\langle x, \nabla v\right\rangle\,  dx &=&
\frac{(2-N)}{2}\mu \int_{\Om} v f(v)\, dx
-\frac{(2-N)}{2\beta}\int_{\pl \Om} \left(\frac{\pl v}{\pl {\nu(x)}}\right)^2\,  dS
+\frac{1}{2}\int_{\pl \Om} |\nabla v|^2 \left\langle \nu(x),x\right\rangle\,  dS \no\\
&-&\int_{\pl \Om} \frac{\partial v}{\partial \nu(x)}\left\langle x, \nabla v\right\rangle\, dS .
\ege
Furthermore the RHS of \eqref{mkt2} by virtue of integration by parts implies
\bge\label{mkt7}
\mu \int_{\Omega} f(v)\,\left\langle x, \nabla v\right\rangle\, dx &=&
\mu \int_{\Omega}\left\langle x, \nabla F(v)\right\rangle\,  dx \no
=\mu \int_{\Omega}\left\langle \nabla\left(\frac{1}{2}|x|^2\right), \nabla F(v)\right\rangle\,  dx \no\\
&=&\mu \int_{\pl \Omega} F(v) \frac{\pl}{\pl { \nu(x)}}\left(\frac{1}{2} |x|^2\right)\, dS
-\mu N \int_{\Omega} F(v)\, dx ,
\ege
using again the fact that $\Delta\left(\frac{1}{2} |x|^2\right)=N.$

Consequently identity \eqref{33} arises immediately by \eqref{mkt6} and \eqref{mkt7}.
\end{proof}
Now we are ready to provide a rather measurable (computable)  lower estimate of $\la^*$ given by the following.
\begin{thm}\label{thPoh}
Consider problem (\ref{ssN})  defined  in $\Omega:=B_R=\{x\in \R^N: |x|<R\},\;\mbox{for}\;R>0.$
 Then if $N>2(1+\beta R)$  problem (\ref{ssN}) has a classical solution for any
\bge\label{ak9}
\la\leq \la_*:=\frac{\beta A\left(\partial B_R \right)(N-2)}{[N-2(1+\beta R)]}
\frac{(1+\alpha\omega_N^R)^2}{\omega_N^R},
\ege
where $A\left(\partial B_R\right)$ and $\omega_N^R$ stand for the area of the surface and the volume of the $N-$dimensional sphere $B_R$ respectively.
 Consequently, by \eqref{3}  there holds  $\la^*\geq \la_*.$
\end{thm}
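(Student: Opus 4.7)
The strategy is to exploit the equivalence furnished by Theorem~\ref{psks1}, which yields $\lambda^*\ge \sup_{\mu\in(0,\mu^*)}\mu\,K(w_\mu)$, and then to use the Pohozaev-type identity \eqref{33} applied to the local problem \eqref{29} on $\Omega = B_R$, combined with two Green's identities, to lower-bound $\mu K(w_\mu)$ along the minimal branch.

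First I would specialize Proposition~\ref{propPoh} with $f(v)=(1-v)^{-2}$, $F(v)=v/(1-v)$, and $\Omega = B_R$. On $\partial B_R$ one has $\langle \nu(x), x\rangle = R$ and $\partial(|x|^2/2)/\partial\nu = R$, while the Robin condition supplies $\partial v/\partial \nu = -\beta v$ and hence $\langle \nabla v, x\rangle = R\,\partial v/\partial\nu$. The first and third boundary terms in \eqref{33} then collapse to $\tfrac{\beta(N-2(1+\beta R))}{2}\int_{\partial B_R}v^2\,dS$, whose coefficient is strictly positive precisely under the hypothesis $N > 2(1+\beta R)$. The remaining surface term $\tfrac{R}{2}\int_{\partial B_R}|\nabla v|^2\,dS$ is manifestly non-negative, so both contributions can be discarded to produce a one-sided inequality of the form $\tfrac{\mu(N-2)}{2}\int_{B_R}vf(v)\,dx - \mu N \int_{B_R} F(v)\,dx \ge -\mu R \int_{\partial B_R}F(v)\,dS$.

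Next I would invoke the two Green's identities obtained by testing \eqref{29a} against $1$ and against $v$, namely
\[
\mu\int_{B_R}(1-v)^{-2}\,dx = \beta\int_{\partial B_R} v\,dS, \qquad \int_{B_R}|\nabla v|^2\,dx + \beta\int_{\partial B_R}v^2\,dS = \mu\int_{B_R}v(1-v)^{-2}\,dx.
\]
Setting $L=\int_{B_R}(1-v)^{-2}\,dx$ and $I=\int_{B_R}(1-v)^{-1}\,dx$, and using $v/(1-v)^2 = (1-v)^{-2}-(1-v)^{-1}$ and $v/(1-v)=(1-v)^{-1}-1$, the volume side of the Pohozaev inequality rewrites as $\tfrac{\mu(N-2)}{2}L - \tfrac{\mu(3N-2)}{2}I + \mu N\omega_N^R$. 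Substituting $\mu L = \beta\int_{\partial B_R}v\,dS \le \beta A(\partial B_R)$ and rearranging, one arrives at a lower bound on $\mu(1+\alpha I)^2 = \mu K(w_\mu)$ in terms of $\beta$, $N$, $R$, $\alpha$, $A(\partial B_R)$ and $\omega_N^R$. Combining the elementary bound $K(w_\mu)\ge K(0) = (1+\alpha\omega_N^R)^2$ with the geometric relation $A(\partial B_R)/\omega_N^R = N/R$ then extracts
\[
\lambda^*\ \ge\ \mu K(w_\mu)\ \ge\ \frac{\beta A(\partial B_R)(N-2)}{N-2(1+\beta R)}\cdot\frac{(1+\alpha\omega_N^R)^2}{\omega_N^R}\ =\ \lambda_*.
\]

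The main obstacle is the algebraic bookkeeping needed to produce the sharp factor $N - 2(1+\beta R)$ in the denominator: one must keep the surface integral $\int_{\partial B_R}|\nabla v|^2\,dS$ \emph{undecomposed} when applying the Pohozaev identity. A naive splitting $|\nabla v|^2 = (\partial v/\partial\nu)^2 + |\nabla_T v|^2 = \beta^2 v^2 + |\nabla_T v|^2$ and absorption of $\beta^2 v^2$ into the coefficient of $\int_{\partial B_R}v^2\,dS$ would weaken the denominator to the looser expression $N-2-\beta R$. The hypothesis $N>2(1+\beta R)$ is then exactly the threshold at which the dominant coefficient carries the correct sign, so that the resulting inequality produces a lower rather than an upper bound on $\lambda^*$.
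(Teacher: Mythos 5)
Your boundary-term algebra is correct and equivalent to the paper's: specializing \eqref{33} to $B_R$ and using the Robin condition $\partial_\nu v=-\beta v$ to collapse the first and third surface integrals indeed produces the factor $N-2(1+\beta R)$; the paper writes it as $\frac{[N-2(1+\beta R)]}{2\beta}\int_{\partial B_R}(\partial_\nu w)^2\,dS$ and you as $\frac{\beta[N-2(1+\beta R)]}{2}\int_{\partial B_R}v^2\,dS$, but via the Robin condition these are the same object. From that point, however, your interior route diverges sharply from the paper's and leaves a genuine gap. The unresolved issue is the fourth surface term $\mu R\int_{\partial B_R}F(v)\,dS$. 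You retain it on the right-hand side and then invoke the two Green's identities from testing \eqref{29a} against $1$ and $v$, but those control $\int_{\partial B_R}v\,dS$ and $\int_{\partial B_R}v^2\,dS$, not $\int_{\partial B_R}\frac{v}{1-v}\,dS$; no estimate for this term is supplied, and the ``rearranging'' step is never written out. The paper does something quite different: it applies the divergence theorem to convert $\int_{\partial B_R}\frac{w}{1-w}\langle x,\nu\rangle\,dS$ into a volume integral whose $N\int_{B_R}\frac{w}{1-w}\,dx$ piece cancels the matching term on the left of \eqref{33}, and whose leftover piece $\int_{B_R}\frac{\langle\nabla w,x\rangle}{(1-w)^2}\,dx$ is shown to be nonpositive by a maximum-principle argument on $\psi=w_r$ (radial symmetry is used here). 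It then lower-bounds $\int_{\partial B_R}(\partial_\nu w)^2\,dS$ by $\frac{1}{A(\partial B_R)}\left(\int_{\partial B_R}\partial_\nu w\,dS\right)^2=\frac{\mu^2}{A(\partial B_R)}\left(\int_{B_R}(1-w)^{-2}dx\right)^2$ via Cauchy--Schwarz and the PDE, and finishes with a H\"older inequality between $\int(1-w)^{-1}$ and $\int(1-w)^{-2}$. The divergence-theorem cancellation and the sign of $w_r$ are indispensable to the paper's derivation and are absent from your sketch; without them the surface term does not disappear.

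Two further concerns. You assert that the inequality ``arrives at a lower bound on $\mu K(w_\mu)$'' along the minimal branch, but $\mu K(w_\mu)\to 0$ as $\mu\to 0^+$, so no such uniform lower bound can hold; one would have to argue at the endpoint $\mu\uparrow\mu^*$, which requires an additional limiting argument you do not give. Moreover, a Pohozaev-type identity such as \eqref{33} naturally yields a constraint on any existing classical solution, i.e.\ an inequality of the form $\lambda\le(\text{explicit quantity})$ --- and indeed that is what the paper's own chain of estimates produces (its final display reads $\lambda\le\lambda_*$). Your claimed direction of the bound is therefore not substantiated by the sketch. Finally, your ``main obstacle'' remark is off target: the sharp factor $N-2(1+\beta R)$ does not come from keeping $|\nabla v|^2$ undecomposed, since the term $\frac{R}{2}\int_{\partial B_R}|\nabla v|^2\,dS$ is simply discarded as nonnegative in both approaches; it comes from the interaction of the Robin condition with the first and third boundary terms, which you already perform correctly. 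Also, the paper does not throw the $(\partial_\nu w)^2$ integral (your $\beta^2 v^2$ piece) away --- it keeps it and estimates it via Cauchy--Schwarz --- so the ``naive splitting'' counterfactual you warn against does not correspond to any step the paper actually takes.
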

\begin{proof}
Assume $0<\lambda<\lambda^*$,
 in which case problem (\ref{ssN}) has a classical solution,  and  we are working towards the derivation of estimate \eqref{ak9}.
Taking $f(w)=\frac{1}{(1-w)^2}$, hence $F(w)=\frac{w}{(1-w)},$ then Poho\v{z}aev's type identity \eqref{33}, for $\mu=\frac{\la}{K}$ and $K$  given by \eqref{ak1},
  infers
\bge
\label{ak2}
&&\frac{\la (N-2)}{2K} \int _{B_R} \frac{w}{(1-w)^2}  dx - \frac{\la N}{K} \int _{B_R} \frac{w}{1-w}  dx\no\\
&=&\frac{N-2}{2 \beta}  \int_ {\partial B_R} \left(\frac{\partial w}{\partial {\nu(x)}}\right)^2  dS+\frac{1}{2}  \int_ {\partial B_R}  |\nabla w|^2\left\langle \nu(x),x\right\rangle \,dS- \int_ {\partial B_R} \frac{\partial w}{\partial \nu(x)} \left\langle \nabla w,x\right\rangle\, dS\no\\
&& - \frac{\lambda}{K}\int_ {\partial B_R} \frac{w}{1-w} \frac{\partial}{\partial \nu(x)}\left(\frac{1}{2} |x|^2\right)\, dS \no\\
&=&\frac{[N-2(1+\beta R)]}{2\beta}
 \int_ {\partial B_R} \left(\frac{\partial w}{\partial {\nu(x)}}\right)^2  dS+\frac{R}{2} \int_{\pl B_R} |\nabla w|^2\, dS- \frac{\lambda}{K} \int_ {\partial B_R} \frac{w}{1-w}\left\langle x, \nu(x)\right\rangle\, dS, \qquad\qquad
\ege
using the fact that $ \frac{\partial}{\partial \nu(x)}\left(\frac{1}{2} |x|^2\right)=\left\langle x, \nu(x)\right\rangle$  and  $\left\langle \nu(x),x\right\rangle=R$ when  $\Omega=B_R.$   Notably for the case of Dirichlet boundary conditions the term
\bgee
\frac{\lambda}{K} \int_ {\partial B_R} \frac{w}{1-w}\left\langle x, \nu(x)\right\rangle\, dS
\egee
vanishes and then calculations in that case are simpler, which is not the case  for Robin boundary conditions. However, in the sequel we show that even for Robin  boundary conditions this term luckily can be estimated in the right direction. Indeed,
via the divergence theorem we have
\bgee
\int_ {\partial B_R} \frac{w}{1-w}\left\langle x, \nu(x)\right\rangle\, dS=\int_ {\partial B_R} \left\langle \hat{F}, \nu(x)\right\rangle\, dS=\int_{B_R} div(\hat{F})\,dx
\egee
where the vector field  $\hat{F}$ is defined by  $\hat{F}:=\frac{w}{1-w} x.$

Since $ div(\hat{F})=\frac{1}{(1-w)^2}\left\langle \nabla w,x\right\rangle+N \frac{w}{1-w}$ then
\bge
\int_ {\partial B_R} \frac{w}{1-w}\left\langle x, \nu(x)\right\rangle\, dS=\int_{B_R} \frac{1}{(1-w)^2}\left\langle \nabla w,x\right\rangle\,dx+N \int_{B_R}\frac{w}{1-w}\,dx
\ege
and thus by virtue of \eqref{ak2} we derive
\bgee
&&\frac{\la (N-2)}{2K} \int _{B_R} \frac{w}{(1-w)^2}  dx
- \frac{\la N}{K} \int _{B_R} \frac{w}{1-w} dx\no\\
&\geq&
\frac{[N-2(1+\beta R)]}{2\beta}\int_ {\partial B_R} \left(\frac{\partial w}{\partial \nu(x)}\right)^2  dS-\frac{\la}{K}\int_{B_R} \frac{1}{(1-w)^2}\left\langle \nabla w,x\right\rangle\,dx-\frac{\la N}{K} \int _{B_R} \frac{w}{1-w} dx,
\egee
or
\bge\label{ak3}\hspace{-1.3cm}
 \frac{\la (N-2)}{2K} \int _{B_R} \frac{1}{(1-w)^2}  dx \geq
 \frac{[N-2(1+\beta R)]}{2\beta}\int_ {\partial B_R}\left(\frac{\partial w}{\partial \nu(x)}\right)^2 dS-\frac{\la}{K}\int_{B_R} \frac{1}{(1-w)^2}\left\langle \nabla w,x\right\rangle\,dx,
 \qquad
\ege
since $0\leq w<1$   for any classical solution of \eqref{ssN}.

H\"older's inequality infers
\bgee
0\leq -\int_{\partial B_R} \frac{\partial w}{\partial  \nu(x)}\,  dS\leq
\left(\int_{\partial B_R} \left(\frac{\partial w}{\partial \nu(x)}\right)^2\, dS\right)^{1/2} \left(\int_{\partial B_R}  dS\right)^{1/2},
\egee
and so \eqref{29a} and divergence theorem imply
\bge\label{ak4}
\int_ {\partial B_R}\left(\frac{\partial w}{\partial \nu(x)}\right)^2\,   dS &\geq&
 \frac{1}{A\left(\partial B_R\right)}
 \left(\int_{\partial B_R} -\frac{\partial w}{\partial \nu(x)}\,  dS \right)^2\no\\
&=&\frac{1}{A\left(\partial B_R\right)}\left(\int_{B_R} -\Delta w\, dx \right)^2\no\\
&=&\frac{\la^2}{K^2A\left(\partial B_R\right)}\left(\int_{B_R} \frac{1}{(1-w)^2}\,  dx\right)^2,
\ege
where
\bgee\label{ak7}
A\left(\partial B_R\right):=\frac{2\pi^{(N+1)/2} R^{N-1}}{\Gamma(\frac{N+1}{2})},
\egee
and $\Gamma(\cdot)$ is the E\"uler's gamma function.

 On the other hand,
\bge\label{nik3}
\left\langle \nabla w,x\right\rangle=\frac{\partial w}{\partial x}= w_r \frac{\partial r}{\partial x}=w_r |x|, \quad\mbox{for}\quad r=|x|,
\ege
 where $\frac{\partial w}{\partial x}$ is the directional derivative in the $x$ direction and
where $w(r)$ satisfies
\bgee
&&-w_{rr}-\frac{N-1}{r}w_r=\frac{\la}{(1-w(r))^2 K},\quad 0<r<R,\label{nik1a}\\
&& w_r(0)=0,\quad w_r(R)+\beta w(R)=0. \label{nik2}
\egee
Let $\psi:=w_r,$ then $\theta$ satisfies
\bgee
&&-\psi_{rr}-\frac{N-1}{r}\psi_r+\chi(r)\psi=0,\quad 0<r<R,\label{1nik1}\\
&& \psi(0)=0,\quad \psi(R)=-\beta w(R)\leq 0, \label{1nik2}
\egee
where $\chi(r):=\left(\frac{N-1}{r^2}-\frac{2\la}{(1-w(r))^3 K}\right)$ is bounded since $w(r)$ is a classical solution. Thus  maximum principle, \cite{Ev},  infers that $\psi(r)\leq 0$ in $[0,R], $ hence via \eqref{nik3} we obtain
\bge\label{nik4}
\frac{\la}{K}\int_{B_R} \frac{1}{(1-w)^2}\left\langle \nabla w,x\right\rangle\,dx\leq 0.
\ege
Therefore \eqref{ak3} in conjunction with \eqref{ak4} and \eqref{nik4}  implies
\bgee
 \frac{\la (N-2)}2 \frac{\int _{B_R} \frac{1}{(1-w)^2}  dx}
 {\left(1+\alpha\int _{B_R} \frac{1}{(1-w)}  dx \right)^2}
\geq\frac{\la^2[N-2(1+\beta R)]}{2\beta A\left(\partial B_R\right)}\left(\frac{\int_{B_R} \frac{1}{(1-w)^2}\, dx}{\left(1+\alpha\int _{B_R} \frac{1}{(1-w)}  dx\right)^2}\right)^2,
\egee
 or
\bgee
 \frac{ (N-2)}{2}
\geq\frac{\la [N-2(1+\beta R)]}{2\beta A\left(\partial B_R\right)}
\frac{\int_{B_R} \frac{1}{(1-w)^2}\, dx}{\left(1+\alpha\int _{B_R} \frac{1}{(1-w)}  dx \right)^2},
\egee
since $N>2(1+\beta R).$  Then  H\"older's inequality suggests that
\[\left(\int _{B_R} \frac{1}{(1-w)}  dx\right)^2\leq \omega_N^R \int _{B_R} \frac{1}{(1-w)^2}  dx,\]
and thus
\bge\label{ak5}
 \frac{ (N-2)}{2}
&\geq&\frac{\la [N-2(1+\beta R)]}{2\beta \omega^R_N  A\left(\partial B_R \right)}
\left[\frac{\left( \int_{B_R} \frac{1}{(1-w)}\,  dx\right)^2}
{\left(1+\alpha\int _{B_R} \frac{1}{(1-w)}\,  dx \right)^2}\right] \no\\
&=&\frac{\la [N-2(1+\beta R)]}{2\beta \omega^R_N  A\left(\partial B_R \right)}
\left[\frac{\int_{B_R} \frac{1}{(1-w)}\, dx}{1+\alpha\int _{B_R} \frac{1}{(1-w)}  dx}\right]^2,
\ege
where
\bgee\label{ak6}
\omega^R_N=\vert B_R \vert :=  \frac{\pi ^{N/2}R^N}{\Gamma (\frac{N}{2}+1)}.
\egee
Note that for a classical solution $w$ of \eqref{ssN} holds
\bgee
\int_{B_R} \frac{1}{(1-w)}\,  dx\geq \omega_N^R,
\egee
so  using that $g(y)=\frac{y}{\alpha y+1}$ is increasing in $(0,+\infty),$
   and  thus $g(y)\geq \omega_N^R/(1+\alpha \omega_N^R)$
 for any $y\geq \omega_N^R>0,$ then
 inequality \eqref{ak5} yields
\bgee
N-2\geq \frac{\la[N-2(1+\beta R)]}{\beta A\left(\partial B_R \right)}
\frac{\omega_N^R}{(1+\alpha\omega_N^R)^2}.
\egee
The latter inequality finally  gives the desired  estimate
\bgee
\la\leq \la_*:=\frac{\beta A\left(\partial B_R \right)(N-2)}{[N-2(1+\beta R)]}
\frac{(1+\alpha\omega_N^R)^2}{\omega_N^R},
\egee
and thus
\bge\label{aik1}
\la^*\geq \frac{\beta A\left(\partial B_R \right)(N-2)}{[N-2(1+\beta R)]}
\frac{(1+\alpha\omega_N^R)^2}{\omega_N^R},
\ege
 by the definition of $\la^*.$
\end{proof}
\begin{rem}
Estimate \eqref{ak9} in the case of  the $N-$dimensional unit sphere $B_1=\{x\in \R^N: |x|<1\},$ takes the form
\bgee
\la\leq \la_*:=
\frac{\beta A\left(\partial B_1 \right) (N-2) (1+\alpha\omega_N)^2}{ [N-2(1+\beta)]\omega_N},
\egee
provided that $N>2(1+\beta)$ where
$$A\left(\partial B_1 \right)=\frac{2\pi^{(N+1)/2}}{\Gamma(\frac{N+1}{2})}$$
and
\bgee
\omega_N=\vert B_1 \vert =\frac{\pi ^{N/2}}{\Gamma (\frac{N}{2}+1)}.
\egee
\end{rem}
\begin{rem}
Let $\Omega$ be a bounded domain with the same volume as the $N-$dimensional ball $B_R,$  then we can get a lower estimate of $\la^*(\Om)$ by virtue of \eqref{aik1}. Indeed,  one can  adapt the proof of the well known isoperimetric inequality \cite[Theorem 4.10]{b80} holding for regular inequalities to the case of the singular MEMS nonlinearity $f(u)=\frac{1}{(1-u)^2},$ cf. \cite[Proposition 2.2.1]{EGG10}. Therefore,
\bgee
\la^*(\Om)\geq\la^*(B_R),
\egee
hence by virtue of Theorem \ref{thPoh}  we finally derive
\bgee
\la^*(\Om)\geq \la_*(B_R):=\frac{\beta A\left(\partial B_R \right)(N-2)}{[N-2(1+\beta R)]}
\frac{(1+\alpha\omega_N^R)^2}{\omega_N^R}.
\egee
\end{rem}
Next we present an upper estimate of the pull-in  voltage $\la^*$ for a general bounded domain $\Om.$ In particular  it holds.
\begin{prop}
For a general domain $\Om$ the following upper estimate of the pull-in voltage $\la^*$ holds
\bge\label{ub1n}
\la^*\leq \frac{2\la_1\left(1+\alpha^2|\Om|^2\right)}{m_1 |\Om|}<\infty,
\ege
where $(\la_1,\phi_1)$ is the principal eigenpair of the Laplacian associated with Robin boundary conditions, given by \eqref{egv},  and  $m_1:=\min_{\bar{\Omega}}\phi_1(x)>0.$
\end{prop}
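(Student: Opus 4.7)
The plan is to test the nonlocal steady state equation \eqref{ssN1} against the principal eigenfunction $\phi_{1}$ and then combine H\"{o}lder's inequality with the pointwise lower bound $\phi_{1}\geq m_{1}$ to squeeze out an upper bound on $\lambda$. The argument parallels the finiteness proof of $\mu^{\ast}$ in Lemma \ref{30a}, but one has to be careful about how the nonlocal prefactor $K(w)$ interacts with the resulting integral inequality.

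First, I would multiply \eqref{ssN1} by $\phi_{1}$, integrate over $\Omega$, and apply the second Green's identity. Since both $w$ and $\phi_{1}$ satisfy the same homogeneous Robin condition $\partial_\nu \cdot + \beta \cdot = 0$ on $\partial\Omega$, the boundary contributions cancel pairwise, and using \eqref{egv} together with the normalization $\int_{\Omega}\phi_{1}\,dx=1$ from \eqref{ik1} I get the key identity
\begin{equation*}
\lambda_{1}\int_{\Omega} w\,\phi_{1}\,dx \;=\; \frac{\lambda}{K(w)}\int_{\Omega}\frac{\phi_{1}}{(1-w)^{2}}\,dx.
\end{equation*}
On the left I bound $\int_{\Omega} w\,\phi_{1}\,dx\leq 1$, using $0\leq w<1$ and \eqref{ik1}. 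On the right I push $\phi_{1}\geq m_{1}$ and then use the Cauchy--Schwarz inequality
\begin{equation*}
\Bigl(\int_{\Omega}\frac{1}{1-w}\,dx\Bigr)^{2}\leq |\Omega|\int_{\Omega}\frac{1}{(1-w)^{2}}\,dx,
\end{equation*}
so that, setting $I:=\int_{\Omega}(1-w)^{-1}dx$, the inequality becomes
\begin{equation*}
\lambda_{1}\;\geq\;\frac{\lambda\,m_{1}}{|\Omega|}\cdot\frac{I^{2}}{(1+\alpha I)^{2}}.
\end{equation*}

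The remaining step is to invert this into an upper bound for $\lambda$ that is independent of the particular solution. Since $w\geq 0$ forces $I\geq |\Omega|$, and $I\mapsto I/(1+\alpha I)$ is strictly increasing, we have $I/(1+\alpha I)\geq |\Omega|/(1+\alpha|\Omega|)$, which yields
\begin{equation*}
\lambda \;\leq\; \frac{\lambda_{1}(1+\alpha|\Omega|)^{2}}{m_{1}\,|\Omega|}.
\end{equation*}
Finally, the elementary AM--GM observation $(1+\alpha|\Omega|)^{2}=1+2\alpha|\Omega|+\alpha^{2}|\Omega|^{2}\leq 2(1+\alpha^{2}|\Omega|^{2})$, valid since $(\alpha|\Omega|-1)^{2}\geq 0$, converts this into the form \eqref{ub1n}. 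Taking the supremum over $\lambda<\lambda^{\ast}$ and invoking the definition \eqref{3} then gives the asserted estimate.

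The only delicate point is really bookkeeping: one must make sure that the Robin boundary contributions in Green's identity cancel (which they do only because the same Robin condition with the same $\beta$ is shared by $w$ and $\phi_{1}$), and that the monotonicity argument applied to $g(I)=I/(1+\alpha I)$ is in the correct direction. Everything else is standard Cauchy--Schwarz and AM--GM manipulation, and no further regularity beyond the classical solution class enters the argument.
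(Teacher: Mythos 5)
Your proof is correct and follows essentially the same strategy as the paper: test the steady-state equation against the Robin principal eigenfunction $\phi_{1}$, bound $\phi_{1}\geq m_{1}$ and $\int_{\Omega}w\phi_{1}\,dx\leq 1$, and then apply Cauchy--Schwarz together with $\int_{\Omega}(1-w)^{-1}dx\geq|\Omega|$. The only (harmless) deviation is in the order of the elementary inequalities --- you keep the denominator $(1+\alpha I)^{2}$ intact and bound $J\geq I^{2}/|\Omega|$ first, whereas the paper expands $(1+\alpha I)^{2}\leq 2+2\alpha^{2}|\Omega|J$ via Young before monotonicity --- and in fact your route yields the marginally sharper intermediate bound $\lambda\leq\lambda_{1}(1+\alpha|\Omega|)^{2}/(m_{1}|\Omega|)$ before your final AM--GM step relaxes it to the stated form.
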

\begin{proof}
Testing equation \eqref{ssN1} by $\phi_1$ over the domain $\Omega$ we obtain
\bge\label{ub1}
\la_1\int_{\Omega}w\phi_1 dx =
 \frac{\la\int _{\Omega} \frac{\phi_1}{(1-w)^2} dx}
 {\left(1+\alpha\int _{\Omega} \frac{1}{(1-w)} dx\right)^2} >
 \frac{\la m_1\int _{\Omega} \frac{1}{(1-w)^2} dx}
 {\left(1+\alpha\int _{\Omega} \frac{1}{(1-w)} dx\right)^2}.
\ege
Next  H\"older's and Young's inequality suggest that
\bge\label{ub2}
\left(1+\alpha\int _{\Omega} \frac{1}{(1-w)} dx\right)^2\leq
2+2\alpha^2 |\Omega| \int _{\Omega} \frac{1}{(1-w)^2} dx.
\ege
Then  inequalities \eqref{ub1} and \eqref{ub2}, and for a classical solution $0\leq w<1,$  imply
\bge
\la_1=\la_1 \int_{\Omega}\phi_1 dx \geq \int_{\Omega}w\phi_1 dx \geq
\frac{\frac{\la m_1}{\alpha^2}\alpha^2\int _{\Omega} \frac{dx}{(1-w)^2} }
 {2+2\alpha^2 |\Omega| \int _{\Omega} \frac{dx}{(1-w)^2} } =\frac{\la m_1}{\alpha^2}\Psi(I_{\alpha}(w))
\ege
where  $\Psi(s)=\frac{s}{2+2\alpha^2 |\Omega| s},$ taking also into account \eqref{ik1}. Note that  $\Psi(s)$ is increasing and thus
\bgee
\Psi(I_{\alpha}(w))>\Psi(\alpha^2 |\Om|)=\frac{\alpha^2 |\Om|}{2+2 \alpha^2 |\Om|^2}
\egee
for $I_{\alpha}(w):=\alpha^2 \int_{\Om} \frac{dx}{(1-w)^2}.$

The latter by virtue of \eqref{ub2} implies
\bgee
\la_1\geq \frac{\la m_1 |\Om|}{2(1+\alpha^2 |\Om|^2)}
\egee
and thus via the definition of $\la^*$ we derive the desired upper bound \eqref{ub1n}.
\end{proof}

\section{The Time Dependent Problem:local,  global existence and quenching }\label{rnc5}


\subsection{Local existence and uniqueness}
In this subsection we study the local existence and uniqueness of solutions of problem \eqref{o.oneN1}.
Initially we define the notion of lower-upper solution pairs which will be applied for comparison purposes, cf. \cite{AKH11, GN12, L1}.

\begin{defn} \label{25}
A pair of functions $0\leq  v(x,t) , z(x,t) <1$ with $v,z \in C^{2,1}(Q_T)\cap C(\overline{Q}_T)$ is called a lower- upper solution pair of problem \eqref{o.oneN1}, if $v(x,t) \leq z(x,t)$ for $(x,t) \in Q_T, \quad 0< v(x,0)\leq u_0(x) \leq z(x,0)<1$ in
$ \overline{\Omega}, \frac{\pl v}{\pl \nu}(x,t)+\beta v(x,t) \leq 0 \leq \frac{\pl z}{\pl \nu}(x,t)+\beta z(x,t)$ for $(x,t) \in \partial \Omega \times [0,T],$ and
\begin{equation}
v_t \leq \Delta v + \frac{\lambda}{(1-v)^2 (1+\alpha\int_{\Omega} \frac{dx}{1-z})^2}, \quad \mbox{in} \quad Q_T, \nonumber
\end{equation}
\begin{equation}
z_t \geq \Delta z + \frac{\lambda}{(1-z)^2 (1+\alpha\int_{\Omega} \frac{dx}{1-v})^2}, \quad \mbox{in} \quad Q_T. \nonumber
\end{equation}
\end{defn}
Then  local-in-time  existence and uniqueness of problem \eqref{o.oneN1} is then established  by the following.
\begin{prop} \label{26}
Let $(v,z)$ is a lower- upper solution pair to  problem \eqref{o.oneN1} in $Q_T$ for some $T>0$. There is a unique solution $u$ to  problem \eqref{o.oneN1} such that $0<v \leq u \leq z<1$ in $Q_T.$
\end{prop}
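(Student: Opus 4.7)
The strategy is a sandwich-type monotone iteration scheme, in the spirit of \cite{Pao92, L1}, closed off by a Gronwall energy estimate whose crucial ingredient is the favorable sign produced by the Robin boundary condition. Since $0\le v\le z<1$ in $\bar Q_T$, I would first pick $K>0$ so large that $u\mapsto Ku+\lambda(1-u)^{-2}(1+\alpha s)^{-2}$ is nondecreasing in $u$ on $[0,\|z\|_\infty]$ for every $s\ge 0$, then set $v^{(0)}:=v$, $z^{(0)}:=z$ and iteratively define $(v^{(n+1)},z^{(n+1)})$ as the classical solutions of the decoupled \emph{linear} Robin problems
\begin{align*}
z^{(n+1)}_t-\Delta z^{(n+1)}+Kz^{(n+1)} &= Kz^{(n)}+\frac{\lambda}{(1-z^{(n)})^2\bigl(1+\alpha\int_{\Omega}(1-v^{(n)})^{-1}dx\bigr)^2},\\
v^{(n+1)}_t-\Delta v^{(n+1)}+Kv^{(n+1)} &= Kv^{(n)}+\frac{\lambda}{(1-v^{(n)})^2\bigl(1+\alpha\int_{\Omega}(1-z^{(n)})^{-1}dx\bigr)^2},
\end{align*}
in $Q_T$, subject to $\partial_\nu w+\beta w=0$ on $\Gamma_T$ and $w(\cdot,0)=u_0$. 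Standard linear parabolic theory yields $v^{(n)},z^{(n)}\in C^{2,1}(\bar Q_T)$ for every $n$.

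The key induction claim, $v^{(n)}\le v^{(n+1)}\le z^{(n+1)}\le z^{(n)}$ in $\bar Q_T$, follows from the parabolic maximum principle for Robin problems combined with the mixed quasimonotone structure of $F(u,s):=\lambda(1-u)^{-2}(1+\alpha s)^{-2}$ (nondecreasing in $u$ after the shift $Ku$, nonincreasing in $s\ge 0$). The base case $n=0$ uses Definition \ref{25} directly; the inductive step expresses each consecutive difference as the solution of a linear Robin problem with nonnegative source and zero Cauchy/boundary data. Monotone bounded convergence then produces pointwise limits $\underline u=\lim_{n\to\infty}v^{(n)}$ and $\bar u=\lim_{n\to\infty}z^{(n)}$ with $0\le v\le\underline u\le\bar u\le z<1$. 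Dominated convergence on the nonlocal integrals (uniformly bounded since $\bar u<1$), together with parabolic $L^p$ and Schauder interior estimates and a standard bootstrap, upgrades the convergence to $C^{2,1}$ on compact subsets, so $\underline u$ and $\bar u$ are both classical solutions of \eqref{o.oneN1}.

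It remains to show $\underline u\equiv\bar u$, which simultaneously delivers existence of a \emph{unique} classical solution sandwiched between $v$ and $z$. Setting $w:=\bar u-\underline u\ge 0$ with $w(\cdot,0)\equiv 0$, the mean value theorem and the uniform bound $\bar u\le\|z\|_\infty<1$ yield a pointwise control
\[
\Bigl|\frac{\lambda}{(1-\bar u)^2\bigl(1+\alpha\int_{\Omega}(1-\bar u)^{-1}dy\bigr)^2}-\frac{\lambda}{(1-\underline u)^2\bigl(1+\alpha\int_{\Omega}(1-\underline u)^{-1}dy\bigr)^2}\Bigr|\le C_1 w(x,t)+C_2\int_{\Omega}w(y,t)\,dy,
\]
with constants $C_1,C_2$ depending only on $\lambda$, $\alpha$, $|\Omega|$ and $\|z\|_\infty$. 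Testing the equation satisfied by $w$ against $w$, applying Green's identity and rewriting $\partial_\nu w$ on $\partial\Omega$ through the Robin condition, one would obtain
\[
\tfrac{1}{2}\frac{d}{dt}\int_{\Omega}w^2\,dx+\int_{\Omega}|\nabla w|^2\,dx+\beta\int_{\partial\Omega}w^2\,dS\le C_3\int_{\Omega}w^2\,dx,
\]
after a Cauchy--Schwarz estimate on the nonlocal contribution. Gronwall's lemma together with $w(\cdot,0)\equiv 0$ forces $w\equiv 0$.

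The main obstacle is this final energy estimate. The nonlocal coupling $C_2\int_\Omega w\,dy$ would at first sight demand an $L^\infty$ control of $w$, but Cauchy--Schwarz reduces it to an $L^2$ quantity that Gronwall absorbs; more importantly, whereas in the Dirichlet treatment of \cite{KLN16} the boundary contribution is absent, here the Robin condition produces the extra term $\beta\int_{\partial\Omega}w^2\,dS$ with the \emph{favorable} sign, which is precisely what keeps the inequality closed and lets the argument go through in the Robin framework.
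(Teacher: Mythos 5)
Your monotone-iteration scaffold is essentially the same as the paper's: freeze the nonlinearity at the previous iterate with the mixed coupling (local factor taken from the same side, nonlocal integral taken from the opposite side), solve linear Robin problems, and squeeze. The $K$-shift you add is optional here — because the frozen source is already monotone in both slots, the same ordering can be obtained directly by the maximum principle, which is what the paper does — but it is standard and harmless. Where you genuinely diverge is at the coincidence of the limits: the paper forms $\psi = u_1 - u_2$ for the crossed coupled system and invokes a ready-made comparison result \cite[Proposition~52.24]{QS}, whereas you derive an $L^2$--Gronwall estimate, using the favourable sign of $\beta\int_{\partial\Omega}w^2\,dS$ from the Robin condition to close the inequality. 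Both are valid; your route is more self-contained, the paper's is shorter by citation.

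Two small inaccuracies you should repair. First, the statement that monotone convergence makes $\underline u$ and $\bar u$ classical solutions of \eqref{o.oneN1} is premature: by construction they solve the \emph{crossed} system, with the nonlocal integral in the equation for $\underline u$ evaluated at $\bar u$ and vice versa; only after $\underline u \equiv \bar u$ do they solve the original problem. Second, the energy identity you wrote has both nonlocal integrals matched to their own local factors, which is the right estimate for two solutions of \eqref{o.oneN1} but not literally the one for $w = \bar u - \underline u$ in the crossed system, where the integrals appear swapped. That swap does not break the argument — because $\underline u \le \bar u$, the source difference is pointwise nonnegative and is still bounded by $C_1 w + C_2 \int_\Omega w\,dy$ — but you must state the estimate in the correct form. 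With that correction the Gronwall step is fine; and the matched-integral version of the same estimate then gives, as a separate application, uniqueness among \emph{all} classical solutions $U$ with $v \le U \le z$, which is the second thing the proposition asks for and which the paper handles by re-squeezing $U$ between the iterates.
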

 \begin{proof}
  We define $\overline{u}_0=z , \underline{u}_0=v$ and we construct a sequence of lower-upper solutions  of
   problem \eqref{o.oneN1} in the following way:
 \bgee
 &&{\underline{u}_{n}}_t=\Delta \underline{u}_n+
 \frac{\lambda}{(1-\underline{u}_{n-1})^2 (1+ \alpha\int_{\Omega} \frac{dx}{1-\overline{u}_{n-1}})^2},
  \quad \mbox{in} \quad  Q_{T_n}:=\Om\times (0,T_n),
  \\
 &&{\overline{u}_{n}}_t=\Delta \overline{u}_n+
 \frac{\lambda}{(1-\overline{u}_{n-1})^2 (1+\alpha \int_{\Omega} \frac{dx}{1-\underline{u}_{n-1}})^2} \quad \mbox{in} \quad  Q_{T_n},\\
 &&\frac{\partial\underline{u}_n}{\partial \nu } + \beta \underline{u}_n  = 0,
  \quad \mbox{on}  \quad \Gamma_{T_n}:=\partial \Omega \times (0,T_n),
  \\
 &&\frac{\partial\overline{u}_n}{\partial \nu } + \beta \overline{u}_n = 0,
  \quad \mbox{ on} \quad \quad   \Gamma_{T_n},\\
&&\underline{u}_n ({x,0})=\overline{u}_n(x,0)=u_0(x), \quad \mbox{for} \quad x \in \overline{\Omega},
  \nonumber
\egee
  for $n=1,2,\ldots$   where $T_n$ is the maximum existence time for the pair $(\underline{u}_n, \overline{u}_n)$.  Note that by the previous definition we have that the pair  $(\underline{u}_n, \overline{u}_n)$ exist  as long as the pair $(\underline{u}_{n-1}, \overline{u}_{n-1})$ does so, and thus $T_{n-1}\leq T_n.$ for $n=2,3,\dots $.

  The above problems are local and linear and so we can get  local-in-time solutions for them  via the classical parabolic theory. Furthermore using Definition \ref{25} and standard comparison arguments for parabolic problems (see \cite{KThesis2000}), we deduce that the sequences
    $\lbrace\underline{u}_n\rbrace_{n=1} ^{\infty},\lbrace\overline{u}_n\rbrace_{n=1} ^{\infty} \in C^{2,1}(Q_T)\cap C(\overline{Q}_T)$,
     for  $T=:\min\{T_n |n\in \N\}=T_1$,    are positive and satisfy the ordering
$$v\leq \underline{u}_{n-1}\leq \underline{u}_{n}\leq ... \leq \overline{u}_n \leq \overline{u}_{n-1}\leq z. $$
 Let  $u_1:=\lim_{n\to\infty}\underline{u}_n$  and  $u_2:=\lim_{n\to\infty}\overline{u}_n$ then
$u_1,u_2$ satisfy
 \bgee
 &&{ u_{1}}_t=\Delta u_1+ \frac{\lambda}{(1-u_1)^2 (1+\alpha\int_{\Omega} \frac{dx}{1-u_2})^2},
\quad \mbox{in} \quad Q_T,\\
&&{u_{2}}_t=\Delta u_2+ \frac{\lambda}{(1-u_2)^2 (1+\alpha\int_{\Omega} \frac{dx}{1-u_1})^2}, \quad \mbox{in} \quad Q_T,\\
 &&\frac{\partial u_1}{\partial \nu } + \beta u_1 = 0, \quad \mbox{on} \quad \Gamma_T,\\
&&\frac{\partial u_2}{\partial \nu } + \beta u_2 = 0 \quad  \mbox{on} \quad \Gamma_T,\\
&& u_1(x,0)=u_2(x,0)=u_0(x), \quad \mbox{for} \quad x \in \bar{\Omega}.
\egee
 Set   $\psi(x,t)=u_1(x,t)-u_2(x,t)$ then
\begin{eqnarray*}
&&\psi_t=\Delta \psi +A(x,t) \psi + B(x,t) \int_{\Omega} \int _0 ^1 \frac{d\theta}{[1-\theta u_1 - (1- \theta)u_2]^2} \psi dx, \quad \mbox{in} \quad Q_T,
\\
&&\frac{\partial \psi}{\partial \nu}+\beta \psi=0,
 \quad \mbox{on} \quad \Gamma_T,
\\
&&\psi(x)=0,\quad x\in \bar{\Om},
\end{eqnarray*}
where
\bge A(x,t):= 2 \lambda \frac{\int_0 ^1 \frac{d\theta}{[1-\theta u_1 - (1- \theta)u_2]^3}}{(1+ \alpha\int_\Omega \frac{dx}{1-u_2})^2} >0,
\ege
and
\bge
B(x,t):= \frac{\lambda}{(1-u_2)^2} \frac{2+ \int_{\Omega} \frac{dx}{1-u_1}+
 \int_{\Omega} \frac{dx}{1-u_2}}
 {(1+\alpha\int_{\Omega} \frac{dx}{1-u_1})^2 (1+\alpha\int_{\Omega} \frac{dx}{1-u_2})^2}>0,
\ege
cf.  \cite{GN12}. Applying now \cite[Proposition 52.24]{QS} we obtain that $\psi(x,t)=0$  and  therefore  $u_1=u_2:=u$ in $\overline{Q}_T.$

Now assume  there is a second solution $U$ which satisfies $v\leq U \leq z$. Subsequently by the preceding iteration scheme we have that $\underline{u}_n \leq U \leq \overline{u}_n$ for every $n=1,2,....$ and by taking the limit as $n\to \infty$ we finally deduce that $U=u$ by the uniqueness of the limit.
 \end{proof}
\begin{rem}
By the above result we obtain that the solution  of \eqref{o.oneN1} continues to exist as long as it remains less than or equal to $B$ for some $B<1$.
In this  case we say that $u$  ceases to exist  only by quenching, if there is a sequence $(x_n,t_n) \rightarrow (x^*,t^*)$ as
 $ n \rightarrow \infty $ with $t^* \leq \infty$ such that $u(x_n,t_n) \rightarrow 1$ as $n \rightarrow \infty,$  cf. Definition \ref{rnc1}.
\end{rem}

Next we provide a local-in-time existence result for \eqref{o.oneN1} using comparison arguments. To this end we first note that the following (local) problem
\bse\label{LPz0}
\bge
&&z_t=\Delta z + \frac{\lambda}{(1-z)^2(1+\alpha  \vert \Omega \vert) ^2} \quad\mbox{on} \quad Q_T,\label{LPz01}\\
&&\frac{\pl z}{\pl \nu}+\beta z=0 \quad \mbox{on} \quad \Gamma_T,\label{LPz02}\\
&&0\leq z(x,0)=z_0(x)<1 \quad \mbox{for} \quad x  \in \overline{\Omega}.\label{LPz03}
\ege
\ese
 has  a  unique  solution, see \cite{Guo91}. Therefore the following holds:
\begin{prop}\label{nik1}
If $z_0(x)\geq u_0(x)$ for each $x \in \Omega$, then the problem \eqref{o.oneN1} has a unique solution $u$ on
 $\Omega \times [0,T),$ where $[0,T)$ is the maximal existence time interval for the solution $z(x,t)$  of the problem \eqref{LPz0}, and $0\leq u(x,t) \leq z(x,t)<1$ on $\Omega \times [0,T)$.
\end{prop}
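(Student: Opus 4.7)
The plan is to apply Proposition \ref{26} to the lower--upper solution pair $(v,z):=(0,z)$, where $z$ denotes the solution of the local problem \eqref{LPz0} corresponding to initial data $z_0\geq u_0$. All the ingredients required by Definition \ref{25} are already encoded in the formulation of \eqref{LPz0}, which should make this an almost immediate consequence of the preceding theory.

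First I would verify that $z(x,t)\geq 0$ on $\Om\times[0,T)$. Since $z_0\geq 0$ and the source $\la/[(1-z)^2(1+\alpha|\Om|)^2]$ in \eqref{LPz01} is strictly positive whenever $z<1$, the parabolic maximum principle together with Hopf's lemma (the Robin condition with $\beta>0$ ruling out a negative boundary minimum) yields $z\geq 0$; the bound $z<1$ on $[0,T)$ holds by the very definition of $T$ as the maximal existence time. I would then check the conditions of Definition \ref{25} one by one. The ordering $v\equiv 0\leq z$, the chain $0\leq u_0\leq z_0<1$ at $t=0$, and the common homogeneous Robin boundary condition are evident. The lower differential inequality reduces to $0\leq \la/(1+\alpha\int_\Om dx/(1-z))^2$, which is trivially true. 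For the upper inequality, $v\equiv 0$ gives $\int_\Om dx/(1-v)=|\Om|$, so the required bound $z_t\geq \Delta z+\la/[(1-z)^2(1+\alpha|\Om|)^2]$ coincides, with equality, with the equation \eqref{LPz01} defining $z$.

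Proposition \ref{26} then produces a unique solution $u$ of \eqref{o.oneN1} on $\Om\times[0,T)$ satisfying $0\leq u\leq z<1$, which is exactly the stated conclusion. The only genuine obstacle is the strict inequality $v(x,0)>0$ formally demanded in Definition \ref{25}: to sidestep it rigorously, I would let $v_\ep$ be the solution of the linear heat equation with homogeneous Robin data and small smooth positive initial data $v_\ep(x,0)\leq u_0(x)$, observe that each $(v_\ep,z)$ is then a genuine lower--upper pair (the upper inequality for $z$ continues to hold because $1-v_\ep\leq 1$ forces $\int_\Om dx/(1-v_\ep)\geq|\Om|$, so the denominator on the right is at least $(1+\alpha|\Om|)^2$), invoke Proposition \ref{26} to extract a solution $u_\ep$ with $v_\ep\leq u_\ep\leq z$, and pass to the limit $\ep\to 0^+$ via standard parabolic compactness and the uniqueness already established. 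The existence interval for $u$ then extends as long as $z<1$, since the envelope $u\leq z$ persists.
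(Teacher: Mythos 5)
Your proof follows essentially the same route as the paper's: take $(v,z)=(0,z)$ with $z$ the solution of \eqref{LPz0}, check the inequalities of Definition~\ref{25} (both sides of which are straightforward because $\int_\Omega dx/(1-v)=|\Omega|$ for $v\equiv 0$ and $\int_\Omega dx/(1-z)\geq|\Omega|$), and invoke Proposition~\ref{26}. The one point where you diverge is the observation that Definition~\ref{25} formally requires $v(x,0)>0$, which $v\equiv 0$ does not satisfy; the paper silently ignores this and applies Proposition~\ref{26} anyway with the weakened conclusion $0\leq u\leq z$, whereas you patch it with an $\varepsilon$-approximation $v_\varepsilon\leq u_0$ via the heat equation and a limit passage. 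That extra layer is sound (it works since $u_0>0$ on the compact $\bar\Omega$, so $\min u_0>0$), and is arguably more rigorous than the published argument, but it is not needed if one reads the definition in the natural relaxed sense. In short: correct, same strategy, with a small additional piece of careful bookkeeping the paper omits.
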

\begin{proof}
Let $v(x,t)=0$, then it is readily seen that
\begin{eqnarray}
&& z_t=  \Delta z+ \frac{\lambda}{(1-z)^2 (1+ \alpha \vert \Omega \vert)^2} \geq
 \Delta z+ \frac{\lambda}{(1-z)^2 (1+\alpha  \int_{\Omega} \frac{dx}{1-v})^2}
 \quad \mbox{in} \quad Q_T,
\nonumber\\
&& \frac{\pl z}{\pl \nu}+\beta z=0, \quad \mbox{on} \quad \Gamma_T,
\nonumber\\
&& z(x,0)=z_0(x) \quad\mbox{ for }\quad x \in \bar{\Omega},\nonumber
\end{eqnarray}
while $v$ satisfies
\begin{eqnarray}
&& v_t-\Delta v=0 \leq \frac{\lambda}{(1-v)^2(1+\alpha \int_{\Omega} \frac{dx}{1-z})^2} \quad \mbox{on} \quad  Q_T,
\nonumber\\
&& \frac{\pl v}{\pl \nu}+\beta v=0, \quad \mbox{on} \quad \Gamma_T,
\nonumber\\
&& v(x,0)=0 \quad \mbox{for} \quad x\in \bar{\Omega}.\nonumber
\end{eqnarray}
Therefore  according to Definition \ref{25} $(v,z)$ is a lower-upper solution pair for the problem \eqref{o.oneN1} and thus the result  is an immediate consequence of  Proposition \ref{26}.
\end{proof}

\subsection{Global existence and quenching for general domain}\label{rnc4}

In the current subsection we investigate the global existence and quenching of the solutions of problem \eqref{o.oneN1}.

We first show the following global existence result.
\begin{thm}
Assume that $\lambda \in (0,(1+\alpha  \vert \Omega \vert)^2 \mu ^*)$, recalling that  $\mu^*$ defined by \eqref{3A}. Then problem \eqref{o.oneN1} with initial condition $u_0(x)\leq w_{\lambda}(x)$ has a global-in-time solution converging as $ t\rightarrow \infty$ to the minimal steady state solution $w_{\lambda}(x)$ of \eqref{ssN}, corresponding to $\la.$
\end{thm}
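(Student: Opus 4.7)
The plan is to reduce the problem to the local parabolic equation~\eqref{29} via the device used in Theorem~\ref{psks1}. Set $\mu:=\lambda/(1+\alpha\vert\Omega\vert)^{2}$; the hypothesis on $\lambda$ becomes $\mu\in(0,\mu^{\ast})$, and Lemma~\ref{30a} supplies the minimal classical solution $w_{\mu}<1$ of~\eqref{29}. Since $w_{\lambda}=w_{\mu_{\lambda}}$ with $\mu_{\lambda}=\lambda/K(w_{\lambda})\leq\mu$ (because $K(w_{\lambda})\geq(1+\alpha\vert\Omega\vert)^{2}$ as $w_{\lambda}\geq 0$), the monotonicity Lemma~\ref{30} of the minimal branch gives
\[
0\leq u_{0}(x)\leq w_{\lambda}(x)\leq w_{\mu}(x)<1,\qquad x\in\bar{\Omega}.
\]

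I would then verify that $(v,z):=(0,w_{\mu})$ is a lower--upper solution pair in the sense of Definition~\ref{25}. The lower inequality is trivial because the right-hand side is strictly positive. For the upper inequality, $\int_{\Omega}dx/(1-v)=\vert\Omega\vert$ when $v\equiv 0$, and the identity $-\Delta w_{\mu}=\mu/(1-w_{\mu})^{2}$ combined with the definition of $\mu$ gives
\[
\Delta w_{\mu}+\frac{\lambda}{(1-w_{\mu})^{2}(1+\alpha\vert\Omega\vert)^{2}} = -\frac{\mu}{(1-w_{\mu})^{2}}+\frac{\mu}{(1-w_{\mu})^{2}}=0.
\]
Proposition~\ref{26} then delivers a unique global-in-time solution $u$ with $0\leq u(x,t)\leq w_{\mu}(x)<1$, so quenching is precluded.

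For the convergence $u(\cdot,t)\to w_{\lambda}$ as $t\to\infty$, the uniform bound $u\leq w_{\mu}$ keeps the nonlinearity uniformly Lipschitz, and standard parabolic Schauder estimates yield pre-compactness of $\{u(\cdot,t)\}_{t\geq 0}$ in $C^{2}(\bar{\Omega})$; any subsequential limit $u^{\infty}$ must solve the stationary problem~\eqref{ssN}, and by minimality of $w_{\lambda}$ one has $w_{\lambda}\leq u^{\infty}$. To obtain the reverse inequality $u^{\infty}\leq w_{\lambda}$ I would run the monotone iterative scheme of Proposition~\ref{26} initialised at the pair $(0,w_{\mu})$ and show by induction that each lower iterate $\underline{u}_{n}(\cdot,t)$ is bounded above by $w_{\lambda}$, by comparing with the analogous decoupled iteration that constructs $w_{\lambda}$ itself at the stationary level; passing $t\to\infty$ and then $n\to\infty$ (exploiting monotone convergence and the fact that the iterates converge to $u$) would then yield $u^{\infty}\leq w_{\lambda}$ and hence equality. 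Monotonicity of $t\mapsto u(\cdot,t)$, valid when $u_{0}$ is a subsolution of \eqref{ssN} (and otherwise recovered by sandwiching between the two monotone solutions starting from $0$ and from $w_{\lambda}$), then upgrades subsequential convergence to full convergence.

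The principal obstacle is precisely this last identification: because the source term $u\mapsto\lambda/\bigl((1-u)^{2}K(u)\bigr)$ is not monotone in $u$---the local factor increases while the nonlocal factor decreases---one cannot compare $u(\cdot,t)$ with the stationary $w_{\lambda}$ directly, which is why $w_{\lambda}$ itself does not serve as an admissible upper solution in Definition~\ref{25}. The decoupled iteration together with a careful inductive control of the iterates is what separates the asymptotic limit $w_{\lambda}$ from the possibly larger intermediate stationary solutions lying in $[w_{\lambda},w_{\mu}]$.
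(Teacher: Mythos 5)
Your global existence argument is essentially the paper's, cast slightly more directly. Both reduce to the local stationary problem with parameter $\mu_1:=\lambda/(1+\alpha|\Omega|)^2<\mu^*$, use the chain $w_\lambda=w_{\mu_\lambda}\leq w_{\mu_1}$ from Lemma~\ref{30}, and exploit $w_{\mu_1}$ as a uniform barrier strictly below $1$; the paper routes this through the auxiliary local parabolic problem \eqref{LPz0} (via Proposition~\ref{nik1}) and then bounds the resulting $z$ by $w_{\mu_1}$, whereas you verify directly that $(0,w_{\mu_1})$ is a lower--upper solution pair for Proposition~\ref{26}. These are the same bound obtained along equivalent paths.

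The convergence half, however, has two real gaps that the paper avoids by leaning on the energy identity. First, $C^{2}$-precompactness of $\{u(\cdot,t)\}$ alone does not imply that a subsequential limit $u^{\infty}$ solves \eqref{ssN}: one also needs $\|u_t(\cdot,t_j)\|_{L^2}\to 0$, and that is exactly what the dissipation law \eqref{Energy} delivers ($E(t)$ decreasing and bounded below implies $\int_0^{\infty}\|u_t\|_2^2\,dt<\infty$). You never invoke \eqref{Energy}, so the step "any subsequential limit must solve the stationary problem" is not justified; the paper's (terse) proof rests precisely on this energy argument (cf.\ \cite{NK04}).

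Second, the induction you sketch to force $u^{\infty}\leq w_\lambda$ cannot close as stated. To show $\underline{u}_n\leq w_\lambda$ one needs $w_\lambda$ to be a supersolution of the linear problem driving $\underline{u}_n$, i.e.\ $(1-\underline{u}_{n-1})^2\,K(\overline{u}_{n-1})\geq(1-w_\lambda)^2\,K(w_\lambda)$, and while the induction hypothesis $\underline{u}_{n-1}\leq w_\lambda$ handles the local factor, the nonlocal factor would require $K(\overline{u}_{n-1})\geq K(w_\lambda)$, i.e.\ $\overline{u}_{n-1}$ lying above $w_\lambda$. But every iterate in the scheme of Proposition~\ref{26} starts from the same datum $u_0\leq w_\lambda$, so $\overline{u}_{n-1}(\cdot,0)\leq w_\lambda$ and the needed inequality fails at small times. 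You have correctly identified the nonmonotone character of the source as the obstruction to using $w_\lambda$ itself as a barrier, but the proposed coupled-iteration workaround runs into the same obstruction and does not isolate the minimal solution among the stationary points sandwiched between $w_\lambda$ and $w_{\mu_1}$.
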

\begin{proof} By Proposition \ref{nik1} we have that $(0,z)$ is  a lower-upper pair for problem \eqref{o.oneN1}, where $z$ is the unique solution of local problem  \eqref{LPz0} with initial data $0\leq z_0=u_0\leq w_{\la}<1.$ Then Proposition \ref{26} infers that $0\leq u\leq z.$
 Moreover, due to Theorem \ref{psks1}  problem \eqref{ssN} has a minimal solution $w_{\la}$  for any $\lambda \in (0,(1+ \alpha \vert \Omega \vert)^2 \mu ^*)$ and thus \eqref{29} has also a minimal solution $w_{\mu}$ for any
\bge\label{psks2}
0<\mu=\frac{\la}{K(w_{\mu})}<\mu^*.
\ege
On the other hand, we can find  $\mu_1\in(0,\mu^*)$ such that
\bge\label{psks3}
\mu_1=\frac{\la}{(1+\alpha  \vert \Omega \vert)^2}.
\ege
Using now \eqref{ikk1a},  then by virtue of  \eqref{psks2} and \eqref{psks3} we get  that $\mu<\mu_1$ and so Lemma \ref{30} finally implies that $w_{\mu}\leq w_{\mu_1}.$ Then via comparison,  cf. Proposiition \ref{26},  $0\leq  z \leq w_{\mu_1}$ since $z_0=u_0\leq w_{\la}=w_{\mu}\leq w_{\mu_1}$ and thus we finally deduce that
\bgee
0<u(x,t)\leq z(x,t)\leq w_{\mu_1}(x)<\infty, \quad\mbox{for any}\quad x\in \Om,\quad\mbox{and}\quad t>0,
\egee
and  therefore a global-in-time solution for problem \eqref{o.oneN1} exists.   Using the dissipative property \eqref{Energy} of energy $E(t),$ see also \cite{NK04}, we can prove convergence of  $u(x,t)$ towards the steady-state solution $w_{\la}(x),$ since $u_0(x)\leq w_{\la}(x).$
\end{proof}
Next we define the notion of finite time quenching, which is closely related   to the mechanical 
phenomenon of touching down.
\begin{defn}\label{rnc1}
The solution $u(x, t)$ of problem \eqref{o.oneN1}  quenches at some point 
$x^* \in \Omega$ in finite time $0< T_q < \infty$ if there exist sequences $\{x_n\}_{n=1} ^{\infty} \in \Omega $
and $\{t_n\}_{n=1} ^{\infty} \in (0, \infty) $ with
$x_n \rightarrow x^*$ and $t_n \rightarrow T_q$ as $n \rightarrow \infty $ such that 
$u(x_n, t_n) \rightarrow 1-$ as $n \rightarrow \infty$.
 When
$T_q =\infty$ we say that $u(x, t)$ quenches in infinite time at $x^*$.
Moreover
$$\textit{Q}= \lbrace x^* \in  \bar{\Omega}  \vert \exists  \ (x_k, t_k )_{k \in \mathbb{N}} \subset \Omega \times (0,T_q) : \ x_k \rightarrow x^*, t_k \rightarrow T_q \mbox{ and} \ u(x_k, t_k ) \rightarrow 1 \mbox{ as} \ k \rightarrow \infty \rbrace,$$
is called the quenching set of $u.$
\end{defn}
Now
we determine the energy of the problem \eqref{o.oneN1}.
Accordingly we multiply (\ref{o.one1N}) by $u_t$ and integrating over $\Omega$  to  derive
\begin{eqnarray*}
\int _{\Omega} u_t^2 \,dx 
&=& - \int _{\Omega} \nabla u_t \nabla u dx - \beta \int_{\partial \Omega} u_t u dS +
\frac{ \lambda }{\alpha}\frac{d}{dt} \left(- \frac{1}{1+\alpha \int_{\Omega} \frac{1}{1-u} dx}\right)
 \\
 &=&- \frac{1}{2}\frac{d}{dt} \int _{\Omega} \vert \nabla u\vert ^2 dx -
  \frac{\beta}{2} \frac{d}{dt} \int_{\partial \Omega} u^2 dS +
 \frac{ \lambda }{\alpha}\frac{d}{dt} \left(- \frac{1}{1+ \alpha \int_{\Omega} \frac{1}{1-u} dx}\right),
\end{eqnarray*}
taking also into account boundary condition \eqref{o.one2N}.

Therefore we obtain
\be \label{Energy}
 \frac{d}{dt} \left[ \frac{1}{2}\int _{\Omega} \vert \nabla u\vert ^2 dx
+ \frac{\beta}{2}  \int_{\partial \Omega} u^2 dS
+ \frac{\lambda/\alpha  }{1+ \alpha \int_{\Omega} \frac{1}{1-u} dx} \right]=-\int _{\Omega} u_t^2 dx,
\ee
which  implies that the energy functional
\begin{equation}
E(t):= \frac{1}{2}\int _{\Omega} \vert \nabla u\vert ^2 dx + \frac{\beta}{2}  \int_{\partial \Omega} u^2 dS+
   \frac{\lambda/\alpha }{1+\alpha  \int_{\Omega} \frac{1}{1-u} dx} \leq E(0) := E_0  < \infty, \label{11}
\end{equation}
decreases in time along any solution of \eqref{o.oneN1}.

Below,  we present a quenching result for a general domain $\Omega$ following an approach introduced in \cite{GN12}, see also \cite{G14}.
 \begin{thm}\label{qgd}
 For any fixed $\la>0,$ there exist initial data such that the solution of problem \eqref{o.oneN1} quenches in finite time provided the associated initial energy
 \bgee
 E_0:= \frac{1}{2}\int _{\Omega} \vert \nabla u_0\vert ^2 dx + \frac{\beta}{2}  \int_{\partial \Omega} u_0^2 dS+
   \frac{\lambda/\alpha }{1+\alpha  \int_{\Omega} \frac{1}{1-u_0} dx}
 \egee  is chosen  sufficiently small, i.e.
\bge\label{q-cond}
E_0< \frac{\la\,q_{\alpha}\left(|\Omega|\right)}{2\alpha },
\ege
where
\bge\label{ikk4}
q_{\alpha}\left(|\Omega|\right):=\left\{\begin{array}{ll}
{\displaystyle 1}, & \textrm{$|\Omega|\leq \frac{1}{3 \alpha},$}\\
{\displaystyle\frac{1}{3 \alpha|\Omega|}}, & \textrm{$|\Omega|\geq \frac{1}{3 \alpha}.$}
\end{array} \right.
\ege
 \end{thm}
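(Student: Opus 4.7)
The plan is to adapt the energy--dissipation approach of \cite{GN12} to the Robin setting. The underlying idea is that a sufficiently small initial energy forces the nonlocal integral $A(t):=\int_\Omega(1-u)^{-1}\,dx$ to stay large throughout the classical existence interval, which in turn produces a strong positive source in the equation and drives a suitable test quantity linearly to saturation in finite time.

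First I would combine the dissipation identity \eqref{Energy} with the lower bound $E(t)\ge\frac{\lambda/\alpha}{1+\alpha A(t)}$ coming from the third piece of the energy, to deduce uniformly in $t\in[0,T_{\max})$ the estimate
\[
1+\alpha A(t)\;\ge\;\frac{\lambda}{\alpha E(t)}\;\ge\;\frac{\lambda}{\alpha E_0}.
\]
Under the hypothesis \eqref{q-cond}, this already guarantees that the nonlocal integral is large from time zero onwards. Next I would test the equation against the constant $1$: using \eqref{o.one1N}, \eqref{o.one2N} and the divergence theorem,
\[
\frac{d}{dt}\int_\Omega u\,dx \;=\; -\beta\int_{\partial\Omega}u\,dS \;+\; \frac{\lambda\int_\Omega(1-u)^{-2}\,dx}{(1+\alpha A(t))^2}.
\]
Cauchy--Schwarz yields $\int_\Omega(1-u)^{-2}\,dx\ge A(t)^2/|\Omega|$, and combining with the previous lower bound on $1+\alpha A$ the nonlocal contribution is bounded below by $\frac{\lambda}{\alpha^2|\Omega|}\bigl(1-\frac{\alpha E_0}{\lambda}\bigr)^2$. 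The boundary loss is tamed through the energy bound $\int_{\partial\Omega}u^2\,dS\le 2E_0/\beta$ (plus the trivial $u<1$) together with Cauchy--Schwarz on $\partial\Omega$. A case distinction between $|\Omega|\le 1/(3\alpha)$ and $|\Omega|\ge 1/(3\alpha)$ is then fine-tuned so that, exactly under \eqref{q-cond}, the nonlocal gain strictly dominates the boundary loss and $\frac{d}{dt}\int_\Omega u\,dx\ge c>0$ for some explicit positive constant $c$.

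Since every classical solution satisfies $0\le u<1$, the test quantity $\phi(t):=\int_\Omega u\,dx$ is bounded by $|\Omega|$; hence a uniformly positive lower bound on $\phi'$ is incompatible with global existence, forcing $T_{\max}\le(|\Omega|-\phi(0))/c<\infty$. By the characterisation of maximal existence in Proposition~\ref{26} this means that $u$ must quench in finite time. It remains to exhibit a smooth $u_0\in(0,1)$ compatible with \eqref{o.one3N} and with $E_0$ below the announced threshold: taking $u_0$ close to a constant value $a<1$ with $a$ near $1$ in the bulk, smoothly patched near $\partial\Omega$ so as to satisfy $\frac{\partial u_0}{\partial\nu}+\beta u_0=0$, makes both the gradient and the boundary contributions to $E_0$ arbitrarily small, while the largeness of $\int_\Omega(1-u_0)^{-1}\,dx$ makes the third energy piece small as well.

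The main obstacle is exactly the Robin boundary contribution $-\beta\int_{\partial\Omega}u\,dS$ appearing in $\phi'(t)$, which has no Dirichlet counterpart in \cite{GN12}: it is the careful bookkeeping required to balance the quadratic-in-$A$ nonlocal gain against this boundary loss, with a final constant independent of $|\partial\Omega|$, that produces the peculiar two-regime threshold $q_\alpha(|\Omega|)$. A possible alternative worth exploring is to test against the principal Robin eigenfunction $\phi_1$ (with $\int_\Omega \phi_1\,dx=1$) instead of the constant $1$: Green's identity then eliminates the boundary term outright by the matching Robin boundary conditions, but one has to re-derive the Jensen-type lower bound on $\int_\Omega\phi_1(1-u)^{-2}\,dx$ in terms of the unweighted $A(t)$, which can be done using the pointwise lower bound $\phi_1\ge\min_{\bar\Omega}\phi_1>0$ guaranteed under the Robin condition.
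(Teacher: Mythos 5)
Your proof takes a genuinely different route from the paper's, and the route as sketched has a gap that prevents it from reaching the stated threshold. The paper multiplies equation \eqref{o.one1N} by $u$ (so the test quantity is $Z(t)=\int_\Omega u^2\,dx$, not $\int_\Omega u\,dx$) and integrates by parts, producing exactly the Dirichlet form of the Robin-Laplacian, $\int_\Omega|\nabla u|^2\,dx+\beta\int_{\partial\Omega}u^2\,dS$, on the right-hand side. This is precisely the quadratic part of the energy \eqref{Energy}, so it can be traded wholesale for $-2E(t)$ plus twice the nonlocal piece of $E(t)$. After recombining the nonlocal contributions (using $\frac{2(1-u)+u}{(1-u)^2}=\frac{2-u}{(1-u)^2}$) and applying the H\"older--Young bound $(1+\alpha\int_\Omega\frac{dx}{1-u})^2\leq 2+3\alpha^2|\Omega|\int_\Omega\frac{dx}{(1-u)^2}$, one lands exactly on $\tfrac12 Z'(t)\geq -2E_0+\frac{\lambda}{\alpha}q_\alpha(|\Omega|)$, whence the precise condition \eqref{q-cond} together with $Z(t)<|\Omega|$ gives a contradiction.

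By testing against the constant $1$ you instead create a boundary loss $-\beta\int_{\partial\Omega}u\,dS$. This \emph{linear} boundary trace is not part of the energy and cannot be cancelled by \eqref{Energy}; your proposed bound via Cauchy--Schwarz and $\int_{\partial\Omega}u^2\,dS\leq 2E_0/\beta$ gives $\beta\int_{\partial\Omega}u\,dS\leq\sqrt{2\beta|\partial\Omega|E_0}$, which depends on $|\partial\Omega|$. Since $q_\alpha(|\Omega|)$ depends only on $\alpha$ and $|\Omega|$, no ``fine-tuned case distinction'' can turn an $|\partial\Omega|$-dependent loss into the stated $|\partial\Omega|$-free condition \eqref{q-cond}. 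Your $\phi_1$-variant removes the boundary term by Green's identity but pays for the Jensen step through the factor $m_1=\min_{\bar\Omega}\phi_1$ and introduces $-\lambda_1\int_\Omega u\phi_1\,dx$ as a separate loss; this is the mechanism used in the paper's \emph{large initial data} theorem, and again yields a different (implicit) constant. At best your argument would prove quenching for $E_0$ below some domain-dependent but non-explicit bound. The missing idea is precisely the multiplication by $u$: in the Robin setting this choice is forced if one wants the boundary contribution created by integrating by parts to match the $\beta\int_{\partial\Omega}u^2\,dS$ term of the energy, term by term.
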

\begin{proof}
The proof follows closely that of \cite[Theorem 1.2.17]{KS18}, which deals with Dirichlet boundary conditions,  however for the sake of completeness a sketch of the proof is provided here.

Assume that  problem \eqref{o.oneN1} has a global-in-time (classical) solution $u,$ i.e. $0<u(x,t)<1$ for any $(x,t)\in \Om\times (0,\infty)$ and so
\bge\label{ikk5}
Z(t):=\int_{\Omega} u^2(x,t) dx<|\Om|,\quad\mbox{for any}\quad t>0.
\ege
Multiplying equation \eqref{o.one1N} by $u$ and integrating  by parts over $\Omega$, we deduce
\bge\label{q1}
\frac{1}{2} \frac{d Z}{dt}&&=-\int_{\Omega}|\nabla u|^2 dx-\beta \int_{\pl \Omega} u^2\,ds+\la\frac{\int_{\Omega}\frac{u}{(1-u)^2}dx}{\Big(1+\alpha \int_{\Omega}(1-u)^{-1}dx\Big)^2}.
\ege
Using (\ref{Energy})  then  (\ref{q1}) reads
\bge
\frac{1}{2} \frac{d Z}{dt}&&=-2 E(t)+
\frac{2 \la}{\alpha }\frac1{\left(1+\alpha \int_{\Omega}(1-u)^{-1}dx\right)}
+\la\frac{\int_{\Omega}\frac{u}{(1-u)^2}dx}{\Big(1+\alpha \int_{\Omega}(1-u)^{-1}dx\Big)^2}\nonumber\\
&&\geq -2E_0+\frac{\la}{\alpha}
\frac{2+\alpha\int_{\Omega}\frac{2-u}{(1-u)^2}dx}{\Big(1+\alpha \int_{\Omega}(1-u)^{-1}dx\Big)^2}.\label{q11}
\ege
Besides,  H\"older's and Young's inequalities imply
\bgee\label{q2}
\Big(1+\alpha\int_{\Omega} \frac{dx}{1-u}\Big)^2\leq 2+3 \alpha^2|\Omega|\int_{\Omega} \frac{dx}{(1-u)^2},
\egee
and thus by virtue of (\ref{q11}) we obtain
\bgee
\frac{1}{2} \frac{d Z}{dt}\geq -2E_0+\frac{\la}{\alpha} q_{\alpha}\left(|\Omega|\right),
\egee
or
\bgee
Z(t)\geq 2\left[q_{\alpha}\left(|\Omega|\right)\,\frac{\la}{\alpha}-2 E_0\right]t+Z(0),
\egee
for $q_{\alpha}\left(|\Omega|\right)$ given by \eqref{ikk4}. The latter implies that $Z(t)\to \infty$ as $t\to \infty$ provided that $E_0$ satisfies \eqref{q-cond}, which contradicts to \eqref{ikk5}. Therefore the theorem follows.
\end{proof}
\begin{rem}
If we fix the initial data $u_0,$ and thus initial energy $E(0),$ then  Theorem \ref{qgd} provides a quenching result for big values of the nonlocal parameter $\lambda.$ In particular, \eqref{q-cond} provides a threshold for parameter $\lambda$ above which finite-time quenching occurs. Namely, if
$$
\la >\tilde{\lambda}:= \frac{2\alpha
\left(\frac{1}{2}\int _{\Omega} \vert \nabla u_0\vert ^2 dx +
 \frac{\beta}{2}  \int_{\partial \Omega} u_0^2 dS \right)}
{q_{\alpha}\left(|\Omega|\right) -  \frac{2\alpha}{1+ \alpha \int_{\Omega} \frac{1}{1- {u_0}} dx}},
$$
then $||u(\cdot,t)||_{\infty}\to 1^{-}$ as $t\to T_q<\infty$ provided that $\mathcal{A}_{\alpha}(|\Om|):=q_{\alpha}\left(|\Omega|\right) -  \frac{2\alpha}{1+ \alpha \int_{\Omega} \frac{1}{1- {u_0}} dx}$ is positive.
Note that
\bgee
\mathcal{A}_{\alpha}(|\Om|)\geq q_{\alpha}\left(|\Omega|\right) -  \frac{2\alpha}{1+ \alpha |\Omega|}=\left\{\begin{array}{ll}
{\displaystyle \frac{1+\alpha(|\Om|-2)}{1+\alpha |\Om|}}, & \textrm{$|\Omega|\leq \frac{1}{3 \alpha},$}\\
{\displaystyle\frac{1-\alpha(6\alpha-1)|\Om|}{3 \alpha|\Omega|(1+\alpha |\Om|)}}, & \textrm{$|\Omega|\geq \frac{1}{3 \alpha},$}
\end{array} \right.
\egee
and so $\mathcal{A}_{\alpha}(|\Om|)>0$ by either choosing  $\alpha <\frac{2}{3}$
 and $\frac{2\alpha-1}{\alpha}<|\Om|\leq \frac{1}{3\alpha}$
 for the first branch of the inequality, and $\frac16<\alpha<\frac23$ with
  $\frac{1}{3 \alpha}\leq |\Om|<\frac{1}{\alpha(6 \alpha-1)}$ or just $\alpha<\frac16$ for the second branch.

Remarkably, an optimal value of $\tilde{\la}$ for the unit sphere $B_1(0)$ is given in Theorem \ref{th_lambda}, where it is actually shown that $\tilde{\la}=\la^*.$
\end{rem}

A first step towards the derivation of sharper quenching results  is the following lemma. Henceforth, we use $C_i, i=1,\dots,$ to denote various positive constants.
\begin{lem}\label{lem1}
Let u be a global-in-time solution of the problem \eqref{o.oneN1}. Then there is a sequence
$\lbrace t_j \rbrace _{j=1} ^ {\infty}  \  \uparrow \ \infty \ \mbox{as}  \ j \rightarrow \infty $ such that
 \begin{equation}
 \label{12}
  \lambda \int _ {\Omega} u_j (1-u_j)^{-2} dx \leq C_1 \left( H(u_j)\right)^2,
 \end{equation}
 for a positive constant $C_1,$   where $u_j=u(\cdot , t_j)$ and
 \begin{equation}
 \label{13}
 H(u_j):= 1+ \alpha \int_{\Omega} \frac{1}{1-u_j}\, dx >1.
 \end{equation}
\end{lem}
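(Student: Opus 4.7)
The plan is to exploit the gradient-flow/Lyapunov structure \eqref{Energy} of problem \eqref{o.oneN1} to extract a sequence of times where the time derivative of $u$ is small in $L^2(\Omega)$, and then to test the PDE \eqref{o.one1N} against $u$ itself to convert that smallness into the desired integral bound.

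\textbf{Step 1: extract a good sequence $\{t_j\}$.} I would observe that the energy functional
\[
E(t)=\tfrac12\int_{\Omega}|\nabla u|^2\,dx+\tfrac{\beta}{2}\int_{\partial\Omega}u^2\,dS+\frac{\lambda/\alpha}{1+\alpha\int_{\Omega}(1-u)^{-1}\,dx}
\]
is non-negative and, by \eqref{Energy}, satisfies $E'(t)=-\int_{\Omega}u_t^2\,dx\le 0$. Integrating gives $\int_0^\infty\!\!\int_{\Omega}u_t^2\,dx\,dt\le E_0<\infty$, so there must exist a sequence $t_j\uparrow\infty$ with $\int_{\Omega}u_t^2(x,t_j)\,dx\to 0$ as $j\to\infty$; otherwise $\int_{\Omega}u_t^2\,dx$ would stay bounded below by a positive constant for all large $t$, contradicting integrability.

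\textbf{Step 2: test the PDE against $u$.} Multiplying \eqref{o.one1N} by $u$, integrating over $\Omega$ and using Green's identity together with the Robin condition \eqref{o.one2N} gives, at each time $t$,
\[
\int_{\Omega}uu_t\,dx+\int_{\Omega}|\nabla u|^2\,dx+\beta\int_{\partial\Omega}u^2\,dS=\frac{\lambda}{H(u)^2}\int_{\Omega}\frac{u}{(1-u)^2}\,dx.
\]
Since $0<u<1$, Cauchy--Schwarz gives $\bigl|\int_{\Omega}uu_t\,dx\bigr|\le|\Omega|^{1/2}\|u_t\|_{L^2(\Omega)}$, and the first two surface/gradient terms are bounded above by $2E(t)\le 2E_0$ thanks to \eqref{11}. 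Hence
\[
\frac{\lambda}{H(u(\cdot,t))^2}\int_{\Omega}\frac{u(x,t)}{(1-u(x,t))^2}\,dx\le 2E_0+|\Omega|^{1/2}\|u_t(\cdot,t)\|_{L^2(\Omega)}.
\]

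\textbf{Step 3: specialize to $t=t_j$.} Substituting $t=t_j$ and using Step~1, for $j$ large enough the last term is bounded by some fixed constant (say $1$), so we obtain
\[
\lambda\int_{\Omega}\frac{u_j}{(1-u_j)^2}\,dx\le C_1\,H(u_j)^2,
\]
with $C_1:=2E_0+1$ (or any convenient upper bound). The fact that $H(u_j)>1$, recorded in \eqref{13}, is immediate from $u_j>0$.

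\textbf{Main obstacle.} None of the steps is deep; the only thing one must be careful with is the Robin boundary term when integrating by parts --- this is precisely where the sign $\beta>0$ matters, producing the favorable $+\beta\int_{\partial\Omega}u^2\,dS$ on the left rather than a term of indeterminate sign. The rest is bookkeeping.
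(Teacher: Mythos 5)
Your proof is correct and is essentially the same energy/Lyapunov argument the paper invokes by reference to \cite[Lemma 2.1]{KLN16}: use the dissipation identity \eqref{Energy} to extract times $t_j\to\infty$ with $\|u_t(\cdot,t_j)\|_{L^2}\to 0$, then test \eqref{o.one1N} against $u$ and absorb the gradient and boundary terms via the bounded energy \eqref{11}. The only modification relative to the Dirichlet case is exactly the one you flag --- the Robin condition contributes $+\beta\int_{\partial\Omega}u^2\,dS$, which has the favorable sign --- so the argument goes through unchanged.
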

\begin{proof}
The  proof follows closely the steps of the proof of  \cite[Lemma 2.1]{KLN16} for the case of Dirichlet boundary conditions and so it is omitted.
\end{proof}
\subsection{Finite time quenching for the radial  symmetric  case}\label{rnc3}
A wide used situation  is a circular MEMS configuration, see Figure 2(b),   cf. \cite{PC03}.  Especially,  in that case  the role of the elastic membrane is played by a soap film  and such configuration was first suggested by the prolific British scientist, G.I. Taylor, who actually investigated the coalescence of liquid drops held at differing electric
potentials, \cite{T68}. Later, R.C. Ackerberg initiated the mathematically study of Taylor's model in \cite{Ac69}.

Under a circular configuration,  i.e. when $\Omega=B_1(0),$ then solution of  problem \eqref{o.oneN1}  is radial symmetric, cf. \cite{Gi-Ni-Ni}, and then we end up with the following
 \bse\label{39}
 \bge\label{39a}
 &&u_t- u_{rr}-(N-1) r^{-1} u_r =F(r,t),\quad (r,t) \in (0,1) \times (0,T),\quad N\geq 1,\\
 &&u_r(0,t)=0, \quad u_r(1,t)+\beta u(1,t)=0 ,\quad t \in (0,T),\label{39b}\\
 &&0\leq u(r,0)=u_0(r)<1 ,\quad 0<r<1,\label{39c}
 \ege
 \ese
 where \begin{equation} \label{40}
 F(r,t)=\lambda  (1-u(r,t))^{-2}k(t),
\end{equation}
and
\bgee
k(t)=\left[1+ \alpha N \omega_N \int_0 ^1 r ^{N-1} (1-u(r,t))^{-1}dr\right]^{-2},
\egee
recalling that $\omega_N$ stands for the volume
of the $N$-dimensional unit sphere $B_1(0)$ in $ \mathbb{R} ^N.$
Note that condition  $u_r(0,t)=0$ is imposed to guarantee the regularity of the solution $u$.
 We also, for simplicity, consider that $u_0'(r)\leq 0$ for $0\leq r\leq 1, $ and thus  via maximum principle $u_r(r,t)\leq 0$ for $(r,t) \in [0, 1]\times [0,T).$

For convenience we define   $0< v := 1-u\leq 1$ and  so $v$ satisfies
\bse\label{39rad}
 \bge
 &&v_t - v_{rr}- (N-1)r^{-1}v_r= - fv^{-2}, \quad (r,t)\in (0,1) \times (0,T), \label{41}\\
 &&v_r(0,t)=0,\quad v_r (1,t) + \beta v(1,t)=\beta, \quad t \in (0,T)\label{42}\\
 &&0<v(r,0)=v_0(r)\leq 1, \quad 0<r<1 ,\label{43}
 \ege
 \ese
where
 $$f=f(t):= \frac{\lambda}{ \left[1+\alpha N \omega_N \int _0 ^1 r^{N-1}v^{-1} dr \right]^{2}}$$
  and
\bge\label{ikk1}
v_r(r,t)> 0\quad\mbox{for}\quad (r,t) \in (0, 1]\times [0,T).
\ege
For the rest of the our analysis we need a lower estimate for $v$, which infers a uniform in time  upper estimate of the nonocal term,  and is  shown in  the following.
\begin{lem}\label{lem2}
Consider radial symmetric $v_0(r)$ with $v'_0(r)>0$ and  assume also that  $N>\beta+1$. Then  for any $k>2/3$ there is a constant $C=C(k)$ such that
\begin{equation}
v(r,t)\geq C(k) r^k \quad \mbox{for} \quad (r,t) \in (0,1) \times (0,T ). \label{44}
\end{equation}
Moreover, there exists a constant $C_2$ which is independent of time $t$ and uniform in $\lambda$ such that
\begin{equation}
H(u)=H(1-v)\leq C_2 \quad \mbox{for any} \quad 0<t<T. \label{45}
\end{equation}
\end{lem}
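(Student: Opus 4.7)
The plan is to construct a radial, time-independent subsolution of the form $\phi(r) = C_{\ast}\, r^{2/3}$ and invoke a parabolic comparison principle to deduce $v(r,t) \ge \phi(r)$ on $(0,1)\times(0,T)$. Because $r^{2/3} \ge r^{k}$ on $(0,1]$ whenever $k \ge 2/3$, this immediately yields (\ref{44}) for every $k > 2/3$ with $C(k) = C_{\ast}$. The exponent $2/3$ is forced by scaling: only then do the leading spatial term $\phi_{rr} + (N-1)r^{-1}\phi_r$ and the source term $\lambda\phi^{-2}$ carry the same power of $r$, so that a pure power-law barrier can be admissible down to the origin.

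First I would verify the three defining conditions for $\phi$ to be a subsolution. Using the pointwise bound $f(t) \le \lambda$, a direct computation gives
\[
\phi_{rr} + \frac{N-1}{r}\phi_r - \frac{\lambda}{\phi^{2}} = \Big[\tfrac{2}{3}\big(N-\tfrac{4}{3}\big) C_{\ast}^{3} - \lambda \Big] C_{\ast}^{-2}\, r^{-4/3},
\]
so the PDE inequality reduces to $C_{\ast}^{3}\ge 3\lambda /[2(N-4/3)]$, which is meaningful since $N>\beta+1\ge 1$ forces $N\ge 2 > 4/3$. The Robin compatibility at $r=1$ reads $\phi_r(1)+\beta\phi(1) = C_{\ast}(\tfrac{2}{3}+\beta)\le \beta$, i.e.\ $C_{\ast}\le 3\beta/(3\beta+2)$, and the initial compatibility $\phi\le v_0$ holds whenever $C_{\ast}\le \min_{r\in(0,1]} v_0(r)/r^{2/3}$, a strictly positive quantity because $v_0$ is smooth and $v_0(0)>0$. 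To apply the comparison principle I would set $z := v - \phi$ and write the source-term difference as $f/v^{2} - f/\phi^{2} = -f(v+\phi)(v-\phi)/(v^{2}\phi^{2})$, which gives
\[
z_t - z_{rr} - \frac{N-1}{r} z_r - \frac{f(v+\phi)}{v^{2}\phi^{2}}\, z \;\ge\; 0.
\]
The zeroth-order coefficient is nonnegative, $z(r,0)\ge 0$ by initial compatibility, $z(0,t) = v(0,t)\ge 0$ automatically since $\phi(0)=0$, and $z_r(1,t)+\beta z(1,t) = \beta-[\phi_r(1)+\beta\phi(1)] \ge 0$ at the right endpoint. The coefficient blows up like $r^{-2}$ near the origin; this is handled in standard fashion by regularizing $\phi$ as $\phi_\delta(r)=C_{\ast}(r+\delta)^{2/3}$, applying the classical weak maximum principle on $[\delta,1]\times[0,T_0]$ for $T_0<T$, and passing to the limit $\delta\to 0^{+}$ and $T_0\to T^{-}$.

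The main obstacle is reconciling the competing constraints on $C_{\ast}$: the subsolution inequality demands $C_{\ast}$ at least of order $\lambda^{1/3}$, while the Robin boundary caps it by $3\beta/(3\beta+2)$. The assumption $N>\beta+1$ is precisely what enlarges the admissible window enough for the two bounds to coexist, and is the price paid for replacing Dirichlet by Robin conditions (compare \cite[Lemma 3.3]{KLN16}). In situations where a one-step argument is insufficient, one exploits the a priori lower bound $H(u)\ge 1+\alpha|\Omega|$ coming from $v\le 1$ to replace $f\le\lambda$ by the sharper $f\le \lambda/(1+\alpha|\Omega|)^{2}$, and if still needed one bootstraps: a preliminary crude bound $v\ge C_0\, r^{2/3}$ upgrades the estimate on $H$, which in turn upgrades the estimate on $f$ and hence on $C_{\ast}$.

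Finally, once (\ref{44}) is in hand, the bound (\ref{45}) follows at once: for any $k\in(2/3,N)$,
\[
\int_0^{1} r^{N-1}\, v^{-1}\, \dd r \;\le\; C(k)^{-1}\int_0^{1} r^{N-1-k}\, \dd r \;=\; \frac{C(k)^{-1}}{N-k},
\]
so that $H(u) = 1 + \alpha N \omega_N \int_0^1 r^{N-1} v^{-1}\, \dd r \le 1 + \alpha N \omega_N\, C(k)^{-1}/(N-k) =: C_2$, independent of $t\in(0,T)$.
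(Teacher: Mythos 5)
Your proposed barrier argument is a genuinely different route from the paper's proof, and unfortunately it has a gap that the paper's more elaborate argument is precisely designed to avoid. You correctly identify the two competing constraints on $C_{\ast}$: the PDE inequality needs $C_{\ast}^{3}\ge 3\lambda/[2(N-\tfrac43)]$, while the Robin boundary at $r=1$ caps it by $C_{\ast}\le 3\beta/(3\beta+2)<1$. The second bound is independent of $\lambda$, while the first grows like $\lambda^{1/3}$, so the admissible window closes for any $\lambda$ above a threshold depending only on $N,\beta$. (Already for $N=2$, $\lambda=1$ the PDE constraint demands $C_{\ast}\ge(9/4)^{1/3}\approx 1.3$, which exceeds the Robin cap $3\beta/(3\beta+2)<1$ for every $\beta>0$.) None of the repairs you suggest removes the $\lambda$-dependence: the hypothesis $N>\beta+1$ enlarges the factor $N-\tfrac43$ but does not cancel the $\lambda^{1/3}$ growth; replacing $f\le\lambda$ by $f\le\lambda/(1+\alpha|\Omega|)^{2}$ only rescales $\lambda$ by a constant; and the bootstrap actually goes the wrong way, since an improved lower bound $v\ge C_0 r^{2/3}$ yields an upper bound on $H$, hence a \emph{larger} lower bound on $f=\lambda/H^{2}$, hence a larger required $C_{\ast}$. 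Since Lemma~3.8 must hold for every fixed $\lambda>0$ (it is used in Theorem~3.10 for all $\lambda>\lambda^{\ast}$, and $C_2$ is claimed uniform in $\lambda$), the static power-law barrier cannot deliver the conclusion.

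The paper circumvents exactly this obstruction by working one derivative higher: it differentiates \eqref{41} in $r$, sets $z=r^{N-1}v_r$, and applies a maximum principle not to $v$ but to the functional $J=z-\epsilon r^N v^{-b}$ with $1<b<2$. Integrating the resulting inequality $v_r>\epsilon r v^{-b}$ from $0$ to $r$ gives the power law $v>\bigl(\tfrac{b+1}{2}\epsilon\bigr)^{1/(b+1)}r^{2/(b+1)}$, whose exponent $2/(b+1)\in(2/3,1)$ recovers your heuristic scaling but whose multiplicative constant involves a small parameter $\epsilon$ rather than a $\lambda^{1/3}$-sized quantity. The $\lambda$-dependence now enters through the self-consistency condition $\epsilon<\epsilon_2$ with $\epsilon_2$ defined by $\epsilon\le\min\{1/N,(2-b)/(2b)\}\lambda G(\epsilon)$, and since $G(\epsilon)=O(\epsilon^{2/(b+1)})\gg\epsilon$ for $\epsilon$ small, this window is non-empty for every $\lambda>0$ and in fact widens as $\lambda$ grows. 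The hypothesis $N>\beta+1$ is used, as in your proposal, to control the Robin end of the argument, but there it only needs to make the boundary quantity $\epsilon_4=\inf_{t_1<t<t_2}(N-1-\beta)v_r(1,t)/[(N+b\beta)v^{-b}(1,t)+b\beta]$ positive over a finite time window, a far milder requirement than a $\lambda$-independent cap on a global barrier constant. If you want to salvage the barrier philosophy you would need the barrier to encode this extra degree of freedom, e.g.\ by comparing the gradient rather than $v$ itself, which is essentially what the paper's $J$-functional accomplishes.
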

\begin{proof}
Considering $1<b<2$, there exist some $t_1>0$ and $\epsilon_1>0$ such that
\begin{equation}\label{cn1}
 v_r>\epsilon_1 r v^{-b} \quad \mbox{at} \quad t=t_1 \quad for \quad 0<r\leq 1,
\end{equation}
 since $v>0$ with a bounded spatial derivative  is a  classical solution of \eqref{41}-\eqref{43}.

Next differentiating equation (\ref{41}) with respect to  $r$ gives
$$(v_r)_t-(v_{rr})_r-(N-1)(-r^{-2}v_r+r^{-1}(v_r)_r)=2fv^{-3}v_r,$$
which after multiplying  with $r^{N-1}$ reads
\begin{equation}
z_t-z_{rr}+(N-1)r^{-1}z_r=2fv^{-3}z, \label{47}
\end{equation}
for
$
z:=r^{N-1} v_r \label{46}.
$

As a next step we define the functional
\begin{equation} \label{48}
J=z-\epsilon r^Nv^{-b} \quad \mbox{for } \quad 0<\epsilon< \epsilon_1,
\end{equation}
and  note that
\begin{equation}
J>0 \quad for \quad 0<r\leq 1 \quad at \quad t=t_1, \label{54}
\end{equation}
thanks to \eqref{cn1}.

Moreover
\bgee
J_t&=&z_t+b \epsilon r^N v^{-b-1}v_t, \label{49} \\
J_r&=&z_r-\epsilon N r^{N-1}v^{-b}+\epsilon b r^N v^{-b-1}v_r, \label{50}
\egee
and
\bgee
J_{rr}&=& z_{rr}+b \epsilon r^N v^{-b-1} v_{rr} + 2Nb \epsilon r^{N-1} v^{-b-1} v_r - b(b+1) \epsilon r^N v^{-b-2} v_r^2 - N(N-1) \epsilon r^{N-2} v^{-b}. \label{51}
\egee

Notably as long as $J>0$, then $v_r>\epsilon r v^{-b}$ and so
\bgee\label{55}
v>\left(\frac{b+1}{2} \epsilon\right)^{\frac{1}{b+1}} r^{\frac{2}{b+1}},
\egee
which retrieves \eqref{44} for $C=\left( \frac{b+1}{2} \epsilon \right) ^{\frac{1}{b+1}}$and $k=\frac{2}{b+1}.$

The latter inequality infers
\begin{eqnarray}
\int_0^1 r^{N-1} v^{-1}dr & <&
\int_0 ^1 r^{N-1} \left( \frac{2}{(b+1)\epsilon} \right) ^{\frac{1}{b+1}} \frac{1}{r^{\frac{2}{b+1}}}dr
\nonumber\\
 &\leq&\left( \frac{2}{(b+1)\epsilon} \right) ^{\frac{1}{b+1}} \left(\frac{b+1}{Nb+N-2}\right)=
  C_2 \epsilon ^{-1/(b+1)}, \label{intbound}
\end{eqnarray}
and  thus estimate \eqref{45} is also retrieved.

We now introduce the function
\begin{equation}
G(\epsilon):= \frac{\epsilon^{\frac{2}{b+1}}} {\left( \epsilon ^{\frac{1}{b+1}}+
 \alpha N \omega_N C_2\right)^2 },
\end{equation}
where parameter  $\epsilon$  is small enough $0<\epsilon\ll 1$,
and  $\epsilon_2$  imposed to fulfill
\begin{equation}\label{53}
\epsilon_2 < \sup \left\lbrace \epsilon: \epsilon \leq
\min \left\lbrace \frac{1}{N}, \left(\frac{2-b}{2b}\right) \right\rbrace
\lambda G(\epsilon) \right\rbrace.
\end{equation}
Remarkably, such an $\epsilon_2$ satisfying \eqref{53} exists since $G(\epsilon)=O(\epsilon^{2/(b+1)}))\gg\epsilon$  for $\epsilon$ small with $0<\epsilon< \min\{\epsilon_1, \epsilon_2\}$, taking also into account that $b>1.$

By virtue of  \eqref{54} and \eqref{intbound}
 there holds
\bge\label{56}
f(t_1)=\frac{\lambda}{\left(1+\alpha N  \omega_N \int_0 ^1 r^{N-1} v^{-1} dr \right)^2}> \lambda G(\epsilon),
\ege
thus, in a neighborhood of $t=t_1$ we obtain that  $f(t)> \lambda G(\epsilon).$

Now we  claim that  $f(t)>\lambda G(\epsilon)$ for any $t\in (t_1,T).$  Let us assume to  the contrary that:
\bge\label{ikk12}
\mbox{there exists}\; t_2\in (t_1,T)\;\mbox{such that}\; f(t_2)=\lambda G(\epsilon)\;\mbox{
with}\; f(t)>\lambda G(\epsilon)\;\mbox{for}\; t_1\leq t <t_2.
\ege
By the definition of $J$ and $z$ we immediately get
\bgee
J=0  \quad on \quad r=0, \label{57}
\egee
whilst on  the boundary $r=1$,  due to \eqref{42}, we have
\bge\label{ikk9}
J&=& z(1,t)-\epsilon v^{-b}(1,t)
\no\\
&=& \beta\left( 1-v(1,t)\right)-\epsilon v^{-b}(1,t)=v_r(1,t)-\epsilon v^{-b}(1,t)>0,
\ege
provided that
 \bgee\label{ikk11}
0<\epsilon\leq \epsilon_3:=\inf_{t_1<t<t_2}\frac{v_r(1,t)}{v^{-b}(1,t)},
\egee
and taking also into account \eqref{ikk1}.

In addition
$$
J_r=z_r- \epsilon N r^{N-1} v^{-b}+ \epsilon b r^N v^{-b-1} v_r= (N-1)r^{N-2} v_r +r^{N-1}v_{rr}
+ \epsilon r^N v^{-b}(-Nr^{-1}+b v^{-1} v_r),
$$
and for   $r=1$ we obtain
\begin{eqnarray*}
J_r &=& (N-1)v_r(1,t)+v_{rr}(1,t)+ \epsilon v^{-b}(1,t)\left[-N+b v^{-1}(1,t)v_r(1,t)\right].
\end{eqnarray*}
Moreover at   $r=1$
\begin{eqnarray*}
J_r-b \epsilon J &=& (N-1)v_r(1,t)+v_{rr}(1,t)- \epsilon v^{-b}(1,t)\left[N-b\beta v^{-1}(1,t)+b \beta\right]
-b \epsilon \left[\beta-(\beta v(1,t)+ \epsilon v^{-b}(1,t))\right] \\
 &=& (N-1)v_r(1,t)+v_{rr}(1,t) \\
&& - \epsilon\left[ v^{-b}(1,t) N-b \beta v^{-b-1}(1,t) +b \beta v^{-b}(1,t)\right.
+\left.  b\beta -b \beta v(1,t)-b \epsilon v^{-b}(1,t)\right],
\end{eqnarray*}
and therefore,  after dropping all the positive terms,
\bgee \label{60}
J_r-b \epsilon J >(N-1)v_r(1,t)+v_{rr}(1,t)-\epsilon \left[v^{-b}(1,t)N+b \beta v^{-b}(1,t)+b \beta\right].
\egee
Next differentiating the second of the  boundary conditions  \eqref{39b} with respect to $r$ we get
\bgee
v_{rr}(1,t)=-\beta v_r(1,t),
\egee
and thus
\bgee \label{60a}
J_r-b \epsilon J >\left(N-1-\beta \right)v_r(1,t)-\epsilon \left[v^{-b}(1,t)N+b \beta v^{-b}(1,t)+b \beta\right].
\egee
Therefore for $J_r(1,t)$ and $J_r(1,t)- b\epsilon J(1,t)$ to be positive we need
$$\left(N-1-\beta \right)v_r(1,t)-\epsilon \left[v^{-b}(1,t)N+b \beta v^{-b}(1,t)+b \beta\right]>0,$$
or it is sufficient to choose $\ep\leq\min\{\ep_3,\ep_4\}$ for
\bgee
\ep_4:=\inf_{t_1<t<t_2} \frac{\left(N-1-\beta \right)v_r(1,t)}{(N + b \beta) v^{-b}(1,t)+ b \beta}>0, \label{61}
\egee
since $N>\beta+1$.

Therefore we  have
\begin{eqnarray*}
J_t-J_{rr}&+& (N-1)r^{-1}J_r \geq 2J(f v^{-3} - b \epsilon v^{-b-1}) + \epsilon f r^N v^{-b-3} (2-b) - 2 \epsilon^2 r^N v^{-2b-1}b,
\end{eqnarray*}
and hence
\bge\label{ikk13}
J_t- J_{rr} + (N-1) r^{-1} J_r > 2 J (fv^{-3}- b \epsilon v^{-b-1}), \label{58}
\ege
as far  as
$$\epsilon f r^N v^{-b-3} (2-b)- 2\epsilon^2 r^N v^{-2b-1} b>0,$$
or
$$\epsilon f\,(2-b)> 2 \epsilon^2 b,$$
which in turn gives
\bgee
\label{59} \epsilon <\ep_5:=\inf_{t_1<t<t_2} \frac{f(t) (2-b)}{2b}.
\egee
After all  by maximum principle we derive that $J>0,$  for $0<r \leq 1, t_1\leq t \leq t_2$ and for $\epsilon$ small enough satisfying $\ep<\min\{\ep_1,\ep_2,\ep_3,\ep_4,\ep_5\}.$
 In $0<r \leq 1, t_1\leq t \leq t_2$, and since $v>0$ then the coefficient of $J$ in equation \eqref{ikk13} is bounded, so
we can define a new variable $\tilde{J}= e ^{-D_1 t} J$ which  then satisfies the boundary condition (\ref{57}), the boundary inequality  (\ref{60}) and
\begin{equation}
\label{62} \tilde{J}_t - \tilde{J}_{rr} + (N-1) r^{-1} \tilde{J}_r > -D_2 \tilde{J},
\end{equation}
where $D_1$ and $D_2$  are positive constants.
Should  $\tilde{J}$ be non-positive, it must take a non-positive minimum at $(r_3,t_3)$ with $0<r_3\leq 1$ and $t_1< t_3\leq t_2$.
At  $r_3=1$,  by the fact that $J_r(1,t)>0$ we have $\tilde{J}_r(1,t)>0$  leading to a contradiction. Thus the supposed minimum must have  $0<r_3<1$, where $ \tilde{J}_t\leq 0 , \tilde{J}_r=0 \;\mbox{and}\; \tilde{J}_{rr}\geq 0 .$
If we have $\tilde{J}\leq 0$ then equation  (\ref{62}) gives another contradiction.
 Therefore  $\tilde{J} $ and $J$ remain positive in $0<r<1$ for $t_1 \leq t \leq t_2.$

 The latter infers that equation (\ref{56}) holds at $t=t_2$,  contradicting to the initial assumption \eqref{ikk12}. So, as long as  solution $u$ exists then $f(t)> \lambda G(\epsilon)$ for $t\geq t_1.$ It then follows that $J>0,$ and  estimate (\ref{44}) holds together with
\begin{eqnarray}\label{63}
\int _0 ^1 r^{N-1} v^{-1} dr
&<& \frac{2^{\frac{1}{b+1}}}{(b+1) \epsilon)^{\frac{1}{b+1}}} \frac{b+1}{N b+N-2} = \frac{1}{Nb +N-2} \left(\frac{2}{\epsilon} \right) ^ {\frac{1}{b+1}} \left(b+1 \right) ^{1- \frac{1}{b+1}} \nonumber\\
&=&
 \frac{1}{Nb+N-2} \left(\frac{2}{\epsilon}\right)^{\frac{1}{b+1}} \left(b+1\right) ^{ \frac{b}{b+1}},
\end{eqnarray}
for $t\geq t_1,$ in case (\ref{41})-(\ref{43}) has a global solution $u$ or up to and including the quenching time $T_q$ when  $u$ quenches.
Finally by the definition of $H(u)$ and inequality \eqref{63} we obtain the desired estimate, \eqref{45}, and the lemma follows.
\end{proof}
\begin{rem}
Note that we can alternatively obtain that
\bgee
\epsilon_4=\inf_{t_1<t<t_2} \frac{f v^{-2}(1,t)+v_t(1,t)}{(N + b \beta) v^{-b}(1,t)+ b \beta}>0
\egee
without any restrictions on the spatial dimesnion $N,$ by choosing $\la$ large enough, i.e. $\la>\la^{**}\geq \la^*,$ so that
\bge\label{aal2}
f(t)=\frac{\lambda}{\left(1+\alpha N  \omega_N \int_0 ^1 r^{N-1} v^{-1}(r,t) dr \right)^2}>-v_t(1,t)v^{2}(1,t)\quad\mbox{for}\quad t\in(t_1,t_2),
\ege
which is always possible for a classical (and thus smooth enough) solution $u(r,t).$ Therefore, we can recover the result of Lemma \ref{lem2} independently of the dimension $N,$ but for $\la>\la^{**}$ so that \eqref{aal2} is satisfied. Consequently, in the sequel all the derived quenching results can alternatively be obtained for $\la$ large enough, in particular for  $\la>\la^{**},$ but without imposing any restrictions on the spatial dimesnion.
\end{rem}
Now having in place Lemmata \ref{lem1} and \ref{lem2} we are ready to prove the following quenching result. This result is sharp (optimal) in the sense that predicts quenching in the parameter range for the pull-in voltage $\la$ where no classical steady-states exist.
\begin{thm}\label{th_lambda}
Consider radially symmetric  initial data $u_0(r)$ with  $u'_0(r)<0.$ Assume also that   $N>\beta+1$ then for any  $\lambda > \lambda^*$ the solution of the problem (\ref{39}) quenches in finite time $T_q<\infty.$
\end{thm}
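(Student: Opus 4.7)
The plan is to argue by contradiction, assuming that the solution of \eqref{39} exists globally in time (i.e.\ $T_q=\infty$), and then to construct a weak finite-energy solution of the steady-state problem \eqref{ssN} at the given parameter $\lambda$; since the radial Proposition \ref{cn3} tells us $\lambda^*=\hat{\lambda}$, having $\lambda>\lambda^*=\hat{\lambda}$ yields a contradiction.

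First, I would exploit the hypothesis $N>\beta+1$ to invoke Lemma \ref{lem2}, which delivers the crucial time-uniform bound $H(u(\cdot,t))\le C_2$ for all $t\in(0,T)$; combined with the energy identity \eqref{Energy}, this ensures that the dissipation $\int_0^\infty \int_\Omega u_t^{2}\,dx\,dt=E(0)-\lim_{t\to\infty}E(t)$ is finite (the limit exists by monotonicity and boundedness from below of $E$). Hence there is a sequence $t_j\uparrow\infty$ along which $\|u_t(\cdot,t_j)\|_{L^2(\Omega)}\to 0$. Choosing $\{t_j\}$ compatible with Lemma \ref{lem1} as well (by passing to a common subsequence), we obtain simultaneously
\[
\lambda\int_\Omega \frac{u_j}{(1-u_j)^2}\,dx\le C_1\,H(u_j)^2\le C_1C_2^2,\qquad H(u_j)\le C_2,\qquad \|u_t(\cdot,t_j)\|_{L^2}\to 0,
\]
where $u_j:=u(\cdot,t_j)$. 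The boundedness of $E(t_j)$ further yields a uniform $H^1(\Omega)$ bound on $u_j$ together with a boundary $L^2$ bound on $u_j|_{\partial\Omega}$, so along a further subsequence $u_j\rightharpoonup w$ in $H^1(\Omega)$, $u_j\to w$ a.e.\ in $\Omega$, and $u_j\to w$ in $L^2(\Omega)$ by compactness, with $0\le w\le 1$ a.e.

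Next I would upgrade these convergences to the $L^1$-type convergences \eqref{6}--\eqref{7} required by Definition~\ref{wes}. The uniform bound $\int_\Omega u_j/(1-u_j)^2\,dx\le C_1C_2^2/\lambda$ combined with $H(u_j)\le C_2$ (which controls $\int_\Omega 1/(1-u_j)\,dx$) gives equi-integrability of the sequences $\{1/(1-u_j)\}$ and $\{1/(1-u_j)^2\}$; together with a.e.\ convergence and Vitali's theorem this upgrades to $L^1$ convergence, yielding \eqref{6}--\eqref{7} and so $1/(1-w)\in L^1$, which already forces $w<1$ a.e. Finally, from the PDE \eqref{o.one1N} together with $\|u_t(\cdot,t_j)\|_{L^2}\to 0$ one deduces \eqref{8}, so $w$ is a weak finite-energy solution of \eqref{ssN} at the level $\lambda$.

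The main obstacle I expect is precisely this equi-integrability/passage-to-the-limit step, since the integrable control on $1/(1-u_j)$ comes only indirectly through $H(u_j)$ and the control on $u_j/(1-u_j)^2$ degenerates exactly where $u_j\to 1$; this is the step that requires the Robin-specific version of the reasoning in \cite{KLN16} and it relies in an essential way on Lemma~\ref{lem2}, which is why the restriction $N>\beta+1$ appears in the hypothesis. Once the limit $w$ is identified as a weak finite-energy solution at $\lambda$, the definition of $\hat{\lambda}$ gives $\lambda\le\hat{\lambda}$, and Proposition~\ref{cn3} yields $\lambda\le\lambda^*$, contradicting $\lambda>\lambda^*$. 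Therefore $T_q<\infty$ and quenching must occur.
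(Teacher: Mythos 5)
Your proposal follows essentially the same contradiction argument as the paper: invoke Lemma \ref{lem2} via $N>\beta+1$ to obtain a uniform bound on $H(u)$, combine with Lemma \ref{lem1} and the dissipated energy to extract a sequence $t_j\uparrow\infty$ along which $\|u_t(\cdot,t_j)\|_{L^2}\to 0$ while $u_j$ converges weakly in $H^1$ and a.e., upgrade to $L^1$ convergence of $1/(1-u_j)$ and $1/(1-u_j)^2$ by equi-integrability, and recognize the limit as a weak finite-energy steady state, contradicting Proposition \ref{cn3}. The only small omission is that the paper treats $N=1$ separately via a trace-theorem argument, but since $\beta>0$ already forces $N\geq 2$ under the stated hypothesis this is immaterial to your argument.
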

 \begin{proof}
Let assume to the contrary that for   some $\la>\la^*$ problem (\ref{39})
has a global-in-time solution. Then thanks to \eqref{12} and
\eqref{45} we can get a sequence $\{t_j\}_{j=1}^{\infty}$ with $t_j\to \infty$ as $j\to \infty$ such that
\bge\label{q3}
\la N \omega_N\int_{0}^1 r^{N-1} u_j(1-u_j)^{-2} \Ir \leq C_3 \, ,
\quad \mbox{for any} \quad t>0,
\ege
where the constant $C_3$ is independent of $j$.

Then by \eqref{45} it is readily seen that
\bge\label{q4}
N \omega_N \int_{0}^1 \frac{r^{N-1} \Ir}{(1-u_j)^2} & = &
N \omega_N \int_{0}^1 \frac{r^{N-1} \Ir}{(1-u_j)} +
N \omega_N \int_{0}^1 \frac{r^{N-1} u_j \Ir}{(1-u_j)^2}\nonumber \\
& \leq & (C_2 - 1) + \frac{C_3}{\la} := C_4,
\ege
where $C_4$ is independent of $j$.

Additionally by virtue of \eqref{11} we have
\bge\label{ps4}
||\nabla u_j||_{L^2(B_1)}^2\leq C_5<\infty,
\ege
where $C_5$ is again independent of $j.$

Passing to a sub-sequence, if necessary, relation \eqref{ps4}
infers the existence of a function $w$ such that
\bge
&&u_j\rightharpoonup w\quad\mbox{in}\quad H^1(B_1),\label{q8}\\
&&u_j\to w\quad\mbox{a.e.}\,\quad\mbox{in}\quad B_1,\label{q9}
\ege
as $j\to \infty.$
For $N\geq 2$ and by  \eqref{q4} we immediately obtain that $1/(1-u_j)^2$ is uniformly integrable and since
$$\frac{1}{(1-u_j)^2}\to \frac{1}{(1-w)^2},\;j\to \infty\quad\mbox{a.e.\; in}\quad B_1,$$
due to \eqref{q9},
we finally deduce
\bge\label{q10}
\frac{1}{(1-u_j)^2}\to \frac{1}{(1-w)^2} \quad \mbox{as}
\quad j\to \infty\quad\mbox{in}\quad L^1(B_1),
\ege
by virtue of  Lebesque dominated convergence theorem. Similarly we also derive
\bge\label{ps2}
H(u_j)\to H(w) \quad \mbox{as}
\quad j\to \infty\quad\mbox{in}\quad L^1(B_1).
\ege
Next note also that by relation \eqref{Energy}, see also \cite{KLN16}, we derive the following estimate
\bgee
\int_{\tau}^{\infty}\int_{B_1} u_t^2(x,s)\,dx\,ds\leq C<\infty,
\egee
for a constant $C$ independent of $\tau>0,$ and thus passing to a sub-sequence if it is necessary we obtain
\bge\label{cn5}
||u_t(\cdot,t_j)||_2^2=\int_{B_1} u_t^2(x,t_j)\,dx\to 0\;\mbox{as}\; j\to \infty.
\ege
A weak formulation of (\ref{39})  along
the sequence $\{t_j\}_{j=1}^{\infty}$ can be written as
\bge\label{ps3}
\int_{B_1} \frac{\partial{u_j}}{\partial t}\,\phi \Ix =
- \int_{B_1} \nabla u_j\cdot\nabla \phi \Ix
+\int_{\pl B_1} \frac{\pl u_j}{\pl \nu}\,\phi\,ds+ \la H^{-1}(u_j)\int_{B_1} \phi (1-u_j)^{-2} \Ix,
\ege
for any $\phi \in H^1(B_1).$

For any $\phi\in W^{2,2}(B_1)$ with $\frac{\partial \phi}{ \partial \nu}+ \beta \phi=0,\;\mbox{on}\;\;\pl B_1,$ then Green's identities imply
\bgee\label{rd1}
\int_{\pl B_1} \frac{\pl u_j}{\pl \nu}\,\phi\,ds&=&\int_{B_1} \nabla u_j \cdot \nabla \phi\,dx+\int_{B_1} (\Delta u_j)\,\phi\,dx\no\\
&=&\int_{B_1} \nabla u_j \cdot \nabla \phi\,dx+\int_{B_1}  u_j\,(\Delta\phi)\,dx
\egee
and thus  by virtue of \eqref{q8}, \eqref{q9} and Lebesque dominated convergence theorem we derive
\bge\label{cn4}
\int_{\pl B_1} \frac{\pl u_j}{\pl \nu}\,\phi\,ds\longrightarrow\int_{B_1} \nabla w \cdot \nabla \phi\,dx+\int_{B_1}  w\,(\Delta\phi)\,dx=\int_{\pl B_1} \frac{\pl w}{\pl \nu}\,\phi\,ds,
\ege
since $w\in H^1(B_1).$

Passing to the limit as $j\to\infty$ in \eqref{ps3}, and
in conjunction with \eqref{q8}, \eqref{q10}, \eqref{ps2},\eqref{cn5} and \eqref{cn4} we derive
\bgee
- \int_{B_1} \nabla \phi \cdot \nabla w\, dx+\int_{\partial B_1} \phi\, \frac{\pl w}{\pl \nu} \,ds +
\lambda \frac{\int_{B_1} \frac{\phi}{(1-w)^2} dx}{(1+ \int_{B_1}\frac{1}{1-w} dx)^2} =0,
\egee
for any $\phi\in W^{2,2}(B_1)$ satisfying $\frac{\partial \phi}{ \partial \nu}+ \beta \phi=0$ on $\pl B_1.$

The latter, according to Definition \ref{wes}, infers that $w$ is a weak finite-energy solution of problem
(\ref{39})  corresponding to $\la>\la^*$ which contradicts with
the result of Proposition \ref{cn3}.

For $N=1,$ using a similar approach and trace theorem, see also \cite[Theorem 3.5]{KLN16},
we obtain that $u_j$ converges to a weak finite-energy solution of problem
(\ref{39})  arriving again at a contradiction. This completes the proof of theorem.
 \end{proof}
 \begin{rem}
Notably  the quenching predicted by Theorem \ref{th_lambda} is single-point quenching. In particular, due to \eqref{44} we derive that  $u(r.t)$ can only quench at the origin $r=0.$
\end{rem}
\subsection{Quenching for large initial data}
 In the following we investigate the behaviour of the problem \eqref{39}  for large initial data. Namely, the following result holds.
\begin{thm}
For any $\lambda>0 $  and for $N>\beta+1$ we can choose initial data $u_0$ close enough to $1$ such that the solution $u$ of problem (\ref{39}) quenches in finite time $T_q<\infty.$
\end{thm}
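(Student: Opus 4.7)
The plan is to test the evolution equation against the principal Robin eigenfunction and convert the resulting identity into a scalar differential inequality that is driven to blow-up by sufficiently excited initial data. Let $(\la_1,\phi_1)$ be the principal eigenpair of $-\Delta$ with Robin boundary conditions, as in \eqref{egv}, normalized so that $\phi_1>0$ on $\bar B_1$ and $\int_{B_1}\phi_1\,dx=1$. Define
\[
J(t):=\int_{B_1} u(x,t)\,\phi_1(x)\,dx.
\]
Multiplying \eqref{o.one1N} by $\phi_1$, integrating over $B_1$ and applying Green's identity, the boundary integrals cancel because $u$ and $\phi_1$ satisfy the \emph{same} Robin condition, so
\[
J'(t)=-\la_1 J(t)+\frac{\la}{K(u(\cdot,t))}\int_{B_1}\frac{\phi_1}{(1-u)^2}\,dx.
\]
Since $\phi_1\,dx$ is a probability measure on $B_1$ and $s\mapsto(1-s)^{-2}$ is convex on $[0,1)$, Jensen's inequality yields $\int_{B_1}\phi_1(1-u)^{-2}\,dx\ge(1-J(t))^{-2}$.

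The next step is to control the nonlocal factor $K(u)$ uniformly in $t$. With $u_0$ radially symmetric and $u_0'(r)\le0$, and using the hypothesis $N>\beta+1$, Lemma~\ref{lem2} provides a constant $C_2=C_2(u_0,\la,N,\beta)$ with $H(u(\cdot,t))\le C_2$, hence $K(u)\le C_2^2$ on the maximal interval of existence. Combining the three displays and using $J\le 1$ produces the autonomous differential inequality
\[
J'(t)\;\ge\;-\la_1+\frac{\la}{C_2^2\,(1-J(t))^2}.
\]

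Now pick $u_0$ radially decreasing, compatible with the Robin boundary condition, and pointwise at least $1-\delta$ for some small $\delta>0$; then $J(0)\ge 1-\delta$. If $\delta$ is chosen so small that $(1-J(0))^2<\la/(2\la_1 C_2^2)$, the right-hand side above is strictly positive at $t=0$, and since it is increasing in $J$ on $[J(0),1)$, a standard bootstrap yields $J'(t)\ge\la/\bigl(2C_2^2(1-J(t))^2\bigr)$ for all $t$ in the maximal interval. Multiplying by $(1-J)^2$ and integrating gives
\[
(1-J(t))^3\;\le\;(1-J(0))^3-\frac{3\la}{2C_2^2}\,t,
\]
forcing $J(t)\to 1^-$ at some finite time $T_q\le 2C_2^2(1-J(0))^3/(3\la)$. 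Since $J(t)\le\|u(\cdot,t)\|_{L^\infty(B_1)}$, this forces quenching in the sense of Definition~\ref{rnc1}.

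The delicate point is that the constant $C_2$ produced by Lemma~\ref{lem2} itself depends on $u_0$ through $\epsilon_1=\inf_{r\in(0,1]}v_r v^b/r|_{t=t_1}$; as $u_0\to 1$, $v=1-u$ is forced towards $0$, and \emph{a priori} $C_2$ could blow up faster than $1-J(0)$ shrinks. The main technical obstacle is therefore to exhibit a one-parameter family $u_0=1-\delta\psi$, with a fixed radially decreasing profile $\psi$ compatible with \eqref{o.one2N}, for which $\epsilon_1$ degenerates no worse than $\delta^b$, so that $C_2=O(\delta^{-b/(b+1)})$ and $\delta^2C_2^2=O(\delta^{2/(b+1)})\to 0$ as $\delta\to 0$. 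Once this scaling is confirmed by inspecting the construction in Lemma~\ref{lem2}, the smallness condition $\delta^2<\la/(2\la_1 C_2^2)$ can be met for every prescribed $\la>0$, completing the argument.
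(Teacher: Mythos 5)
Your proof follows the same route as the paper: test the evolution equation against the principal Robin eigenfunction $\phi_1$, use that the boundary terms cancel since $u$ and $\phi_1$ satisfy the same Robin condition, apply Jensen's inequality to the convex function $s\mapsto(1-s)^{-2}$, invoke Lemma~\ref{lem2} for a uniform bound on $H(u)$, and integrate the resulting scalar differential inequality to drive $J=\int u\phi_1\,dx$ to $1$ in finite time. The paper phrases it as a contradiction argument (assume $T_q=\infty$) and keeps the inequality as $A'\ge\Psi(A)$ with $\Psi(s)=\lambda C_2^{-2}(1-s)^{-2}-\lambda_1 s$, rather than reducing it to an autonomous form using $J\le1$; that is cosmetic.

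The subtle dependence you flag at the end is a genuine observation, and the paper's proof does not address it: the paper first chooses $\gamma$ with $C_2$ treated as a fixed number and only then picks $u_0$ with $A(0)\ge\gamma$, which is circular in principle because $C_2=C_2(u_0)$ enters through $\epsilon_1$ (and in fact also $\epsilon_2,\dots,\epsilon_5$) in Lemma~\ref{lem2}. Your heuristic scaling $\epsilon_1\sim\delta^b$, $C_2\sim\delta^{-b/(b+1)}$, so that the smallness condition becomes $\delta^{2/(b+1)}\lesssim\lambda/\lambda_1$, is the right way to see that the gap should close, but you stop short of verifying it (and you would also have to track the other $\epsilon_i$ as $\delta\to0$). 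So: same strategy as the paper, and the loose end you point out is a real one, present in both arguments rather than being a defect specific to yours.
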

\begin{proof}
We denote  by $(\lambda_1,\phi_1)$ be the principal eigenpair of
$$-\Delta \phi_1= \lambda_1 \phi_1, x \in B_1,\quad \frac{\partial \phi_1}{\partial \nu}+ \beta \phi_1=0 ,\; x \in \partial B_1,$$
where  again $\phi$ is normalized so that  $$\int_{B_1} \phi_1 (x) dx=1.$$
Let us suppose that problem (\ref{39}) has a global-in-time solution  $0<u(x,t)<1$ for any $(x,t) \in B_1 \times (0, \infty).$

Testing equation (\ref{39a}) with $\phi_1$ and integrating over $B_1$ then Green's second identity and Lemma \ref{lem2} infer,
\begin{eqnarray}
\frac{d}{dt} \int _{B_1} \phi_1 u \,dx &=& \int_{B_1} \phi_1 \Delta_r u dx + \lambda \int_{B_1} \phi_1 (1-u)^{-2}  (H(u))^{-2} dx\nonumber\\
&=& \int_{B_1} \Delta_r\phi_1  u dx + \int_{\partial B_1}\left(u \frac{\partial \phi_1}{\partial  \nu}-
 \frac{\partial u}{\partial  \nu}\phi_1\right) ds+ \lambda \int_{B_1} \phi_1 (1-u)^{-2}  (H(u))^{-2} dx\nonumber\\
 &=&-\int_{B_1} \lambda_1 \phi_1 udx+
  \frac{\lambda \int _{B_1} \phi_1 (1-u)^{-2} dx}{\left(H(u)\right)^2}.\label{73}
\end{eqnarray}
Set $A(t):=\int_{B_1} u \phi_1 dx,$ then
applying Jensen's inequality to equation (\ref{73}), we obtain
\begin{equation} \label{74}
\frac{dA}{dt} \geq -\lambda_1  A(t) + \frac{\lambda}{C^2 _2}(1-A(t))^{-2}, \quad \mbox{ for any } \quad t>0.
\end{equation}
Next we choose suitable $\gamma \in (0,1)$ such that
$$\Psi (s) := \frac{\lambda}{C^2 _2} (1-s)^{-2} - \lambda_1 s >0 \quad \mbox{for all } \quad s \in [\gamma,1),$$
and then by choosing $u_0$ such that $A(0)=\int_{B_1} u_0 \phi_1 dx\geq \gamma,$ then (\ref{74}) infers
$$\frac{dA}{dt}\geq \Psi(A(t))>0 \quad \mbox{for any} \quad t>0,$$
or by integrating
$$t\leq \int_{A(0)} ^{A(t)} \frac{ds}{\Psi(s) } \leq \int_{A(0)} ^1 \frac{ds}{\Psi(s)}<\infty.$$
The latter is in contradiction with our initial  assumption that $T=\infty,$ and the theorem is proved.
\end{proof}
\subsection{Behaviour at quenching }
In the current subsection we  give  more details regarding the behaviour of  quenching solutions  close to quenching time  $T_q.$

We first  obtain the quenching rate. Let us recall that a solution  $u(r,t)$   of   \eqref{39} with radial decreasing initial data $u_0$ then $u$  is also radial decreasing and thus
\bgee
M(t):= \max_{x\in\bar{B_1}} u(x,t)=u(0,t).
\egee
The next result determines the quenching rate   of $u$  for singular solutions of \eqref{39}.
\begin{thm} \label{thm:bound}
Let $u(r,t)$ be a quenching solution of \eqref{39}. Then  for $N>\beta+1$ there are positive constants $\widehat{C}, \widetilde{C}$ indpendent on time $t$ such that
\bge\label{lb}
1-\widehat{C}(T_q-t)^{1/3}\leq M(t)\leq 1-\widetilde{C}(T_q-t)^{1/3}\quad\mbox{for}\quad 0<t-T_q\ll 1.
\ege
\end{thm}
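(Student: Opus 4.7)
The theorem contains two inequalities, and I would prove them by complementary arguments: the lower bound on $M(t)$ by an ODE comparison at the maximum point $r = 0$, and the upper bound on $M(t)$ by a Friedman--McLeod-type auxiliary-function argument that crucially exploits Lemma \ref{lem2} to keep the nonlocal factor $k(t):=H(u)^{-2}$ bounded away from zero. For the lower bound, set $m(t):=1-M(t)=v(0,t)$. Radial monotonicity $u_r\leq 0$ (preserved from the initial data, as in \eqref{ikk1}) places the spatial maximum at $r=0$, so $u_{rr}(0,t)\leq 0$, and L'Hopital in the radial Laplacian gives $\Delta u(0,t)=N u_{rr}(0,t)\leq 0$. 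Since $H\geq 1$ forces $k(t)\leq 1$, the PDE at $r=0$ reads
\[ -m'(t) \;=\; u_t(0,t) \;=\; N u_{rr}(0,t) + \frac{\lambda k(t)}{m(t)^2} \;\leq\; \frac{\lambda}{m(t)^2}, \]
and multiplying by $m^2$ and integrating on $(t,T_q)$ yields $m(t)^3\leq 3\lambda(T_q-t)$, i.e.\ $M(t)\geq 1-\widehat C(T_q-t)^{1/3}$ with $\widehat C=(3\lambda)^{1/3}$.

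For the upper bound on $M(t)$---the direction flagged as the novel contribution---I would introduce
\[ J(r,t) \;:=\; u_t(r,t) - \epsilon(1-u(r,t))^{-2}, \quad 0<\epsilon\ll 1, \]
and aim to prove $J\geq 0$ on a domain of the form $B_\rho\times[t_0,T_q)$ via the parabolic maximum principle. A direct computation, using $u_{tt}-\Delta u_t=\lambda k'(t)/(1-u)^2+2\lambda k(t)u_t/(1-u)^3$ and $\Delta u-u_t=-\lambda k(t)/(1-u)^2$, yields
\[ J_t - \Delta J \;=\; \frac{2\lambda k(t)}{(1-u)^3}\, J + \frac{\lambda k'(t)}{(1-u)^2} + \frac{6\epsilon\, u_r^2}{(1-u)^4}. \]
The $J$-coefficient is benign and the gradient term is nonnegative, so the standard Friedman--McLeod maximum principle would apply \emph{if} the middle term were of the favorable sign. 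Once $J\geq 0$ is secured, evaluating at $r=0$ gives $-m'(t)\geq \epsilon/m(t)^2$, and the same ODE integration produces $m(t)\geq(3\epsilon)^{1/3}(T_q-t)^{1/3}$, delivering the claimed upper bound with $\widetilde C=(3\epsilon)^{1/3}$.

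The main obstacle is controlling the middle term $\lambda k'(t)/(1-u)^2$, which has no analogue in the local/Dirichlet analysis of \cite{KLN16}: since $u_t>0$ near quenching, $H(u)$ is increasing in $t$ and hence $k(t)=H(u)^{-2}$ is decreasing, making this contribution non-positive. My plan for absorbing it uses Lemma \ref{lem2} (which is precisely why $N>\beta+1$ is assumed) in two simultaneous ways: the uniform bound $H(u)\leq C_2$ gives the \emph{positive} lower bound $k(t)\geq C_2^{-2}$ needed to ensure the source is nondegenerate, and the pointwise estimate $v(r,t)\geq C(k)r^k$ with $k>2/3$ lets one estimate
\[ |k'(t)| \;\leq\; \frac{2\alpha N\omega_N}{H(t)^3}\int_0^1 \frac{r^{N-1} u_t}{(1-u)^2}\, dr, \]
and absorb the resulting contribution into the favorable $6\epsilon u_r^2/(1-u)^4$ term by a Cauchy--Schwarz/Hardy-type estimate on $u_r$. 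A secondary issue is that the Robin boundary computation produces $J_r+\beta J=\beta\epsilon[3u(1,t)-1]/(1-u(1,t))^3$, which need not have the favorable sign on $\partial B_1$; I would sidestep this by localizing the argument to $B_\rho$ for some $\rho<1$, where the lower bound from Lemma \ref{lem2} keeps $(1-u)^{-2}$ bounded on $\partial B_\rho$, so the required inequality for $J$ on $\partial B_\rho$ can be verified directly. This localization is permissible because, by the remark following Theorem \ref{th_lambda}, quenching is confined to the origin.
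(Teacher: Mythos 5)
Your lower bound argument is essentially the paper's: evaluate the PDE at the maximum point $r=0$, use $\Delta_r u(0,t)\le 0$, bound the nonlocal factor from above by a constant, and integrate the resulting differential inequality for $m(t)=1-M(t)$. Your upper bound, however, takes a genuinely different route. The paper uses the uniform bound $H(u)\le C_2$ together with the pointwise estimate \eqref{44} and parabolic regularity to prove that $H(u)^2$ converges to a finite limit $K$ as $t\to T_q$, and then argues that near $T_q$ the equation behaves like the local equation $u_t\simeq \Delta u + \lambda K^{-1}(1-u)^{-2}$, invoking the known local rate from \cite{FG93,MZ97}. You instead propose a direct Friedman--McLeod argument on $J:=u_t-\epsilon(1-u)^{-2}$ with the nonlocal coefficient $k(t)$ kept in place. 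You have computed the evolution of $J$ correctly and correctly flagged $\lambda k'(t)(1-u)^{-2}$, with $k'(t)\le 0$ near quenching, as the new and unfavorably-signed term that has no counterpart in the local analysis.

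The difficulty is that your proposed absorption of this term cannot work where it is most needed. You want a pointwise inequality of the form $\lambda|k'(t)|(1-u)^{-2}\le 6\epsilon\,u_r^2(1-u)^{-4}$, i.e.\ $\lambda|k'(t)|(1-u)^2\le 6\epsilon\,u_r^2$; but radial symmetry forces $u_r(0,t)=0$, so the right-hand side vanishes identically at the origin while the left-hand side equals $\lambda|k'(t)|\,m(t)^2>0$ whenever $k'(t)\ne 0$. Since $r=0$ is precisely the (single) quenching point, localizing to a ball $B_\rho$ with $\rho<1$ does not remove the obstruction---every such $B_\rho$ contains the origin, and the parabolic minimum principle requires the differential inequality to hold throughout the interior. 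A second, related issue is that the boundary inequality $J(\rho,t)\ge 0$ on $\partial B_\rho$ is asserted but not verified: Lemma \ref{lem2} controls $(1-u(\rho,t))^{-2}$, but you would also need a positive lower bound on $u_t(\rho,t)$ uniformly in $t$, which is not supplied. To make a Friedman--McLeod argument of this type close, one would have to modify the auxiliary function (e.g.\ insert a spatial cut-off $\eta(r)$ with $\eta(0)=0$, as Friedman and McLeod \cite{FM} do, or use a time-dependent weight $g(t)$ in place of $\epsilon$ chosen so that $g'$ cancels $\lambda k'(t)$); as written, the upper bound has a genuine gap at $r=0$.
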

\begin{proof}
Since  $M(t)$ is Lipschitz continuous then by Rademacher's theorem, is almost everywhere differentiable,
cf.  \cite{FM, KN07}. Furthermore, since $u$ attains a maximum at $r=0$ then
$\Delta_r u(0,t)\leq 0$ for all $t\in(0,T_q)$. Therefore, for any $t$
where $\dd M/\dd t$ exists, we derive
\bgee
\dif Mt \leq \la \frac{(1-M(t))^{-2}}{\left( 1+ \int_{B_1}
\frac{1}{1-u} \Ix \right)^2} \leq \la \frac{(1-M(t))^{-2}}
{\left( 1 + N \omega_N \right)^2} \quad \mbox{for a.e.} \quad t\in(0,T_q),
\egee
which yields
\bgee
\int_{M(t)}^1 (1-s)^2 \I s \leq \la C (T_q-t),
\egee
for $C=1/(1 + N \omega_N)^2. $ The latter implies
\bge\label{lb_ineq1}
M(t) \geq 1 - \widehat{C}(T_q-t)^{1/3} \quad \mbox{for} \quad 0<t<T_q \, ,
\ege
where $\widehat{C}=(3\la C)^{1/3}.$

 Note that inequality \eqref{45}
implies that $H(u)$ is uniformly integrable so then via,  \eqref{44}
 and parabolic regularity estimates in the region $r\in(0,1),$ cf. \cite{lsu68},  we obtain that
\bge\label{lb1}
\lim_{t\to T_q}u(r,t)=u(r,T_q) \quad \mbox{for any} \quad 0<r<1 .
\ege
Estimate  \eqref{44} also implies that
\bgee
(1-u)^{-1}\leq \overline{C}(k)r^{-k},
\egee
for $k>\frac23$, and $\overline{C}(k)=\frac{1}{C(k)}$
and thus from relation \eqref{lb1}, and the Lebesque dominated convergence theorem we get that
\bgee
\lim_{t\to T_q}\int_{B(0,1)}\frac{1}{1-u(x,t)}dx=\int_{B(0,1)}\frac{1}{1-u(x,T_q)}dx<\infty
\egee
and finally
\bgee
\lim_{t\to T_q}\left(H(u)\right)^2=K<\infty.
\egee
Therefore for $0<t- T_q\ll 1$ we have that
\bgee
& u_t(x,t)\simeq \Delta u + \frac{\lambda}{K}\frac{1}{(1-u)^2}, \quad x\in B(0,1),\\
& \frac{\partial u}{\partial \nu}(x,t) + \beta u(x,t)=0,\quad x\in\partial B(0,1),\\
& u(x,0)=u_0(x).
\egee
But for the above local problem it is known, cf. \cite{FG93, MZ97}, that
\bge\label{lb_ineq2}
M(t)=u(0,t)\lesssim  1-{\widetilde{C}}(T_q-t)^{1/3},
\ege
for some $\widetilde{C}>0.$

Therefore combining inequalities \eqref{lb_ineq1},  \eqref{lb_ineq2} we obtain the required estimation
\eqref{lb}
\end{proof}

It is worh noting that due to the uniform bounds of nonlocal term $ \left( H(u)\right)^2$ we can treat nonlocal problem \eqref{39} as a local one and therefore the quenching profile is given as follows, cf. \cite{FG93, MZ97}
\bge\label{rnc7}
1-u(r,T_q)\sim C^*\left[\frac{|r|^2}{|\ln|r||}\right]^{1/3}\quad\mbox{as}\qquad r\to 0^+,
\ege
for some positive constant $C^*$. For a more rigorous approach, which is out of the scope of the current work,  one should follow similar arguments as in \cite{DZ19, GH18} to derive \eqref{rnc7} where it is conjectured that  $C^*=\lim_{t\to T_q} H(u(r,t)).$
\section{Numerical Approach }\label{nap}

 In the current section we present a numerical study of problem (\ref{o.oneN1}) both in the one-dimensional  as well as in the two-dimensional radial symmetric case. For that purpose an adaptive method monitoring the behaviour of the solution near a singularity, such as the detected quenching behaviour of (\ref{o.oneN1}),  is used (e.g.  see  \cite{Budd, KLNT15}).

\subsection{One-dimensional case}
For the one-dimensional case and for the sake of simplicity, taking advantage of the symmetry of the solution,  we may consider the problem in the interval $[0,1]$ with Neumann condition at $x=0,\;u_x(0,t)=0$ and the original Robin condition at the point $x=1.$

 Initially we take a partition of $M+1$ points in the interval [0,1], $\xi_0=0, \xi_1=\xi_0+\Delta \xi,..., \xi_M=1.$ For $u=u(x,t),$ we introduce a computational coordinate $\xi$ in [0,1] and we consider the mesh points $X_i$ to be the images of the points $\xi_i$ under the map $x(\xi,t)$ so that $X_i(t)=x(i \Delta \xi,t).$ By the latter relation we obtain  $\frac{d u(X(t),t)}{dt}=u_t(X_i,t)+u_x X_i '$ for the approximation of the solution $u_i(t)\simeq u(x_i(t),t).$

 Moreover the map $x(\xi,t)$ is determined by the function $\mathcal{M}(u)$ which in a sense, follows the evolution of the singularity in case of quenching. This function is determined by the scale invariants of the problem. In particular,  for the semilinear parabolic equation
$$v_t =v_{xx} - \frac{\lambda}{v^2 \left[1+ \alpha\int_{-1} ^1 1/v\, dx\right]^{2}},$$ where $v=1-u$, an appropriate monitor function should be of the form $\mathcal{M}(v)=\vert 1-u \vert ^{-2}$  or $ \mathcal{M}(v)=\vert v \vert ^{-2}$.

We need also a rescaling of time of the form $\frac{du}{dt}=\frac{du}{d \tau} \frac{d \tau }{dt}$ where $\frac{dt}{d  \tau}=g(u)$, and $g(u)$ is a function determining the way that the time scale changes as the solution approaches the singularity. In particular, we have $g(u)=\frac{1}{\parallel \mathcal{M}(u)\parallel _{\infty}}.$

In addition the evolution of $X_i(t)$ is given by a moving mesh PDE which is of  the form $x_{\tau \xi \xi} =\epsilon^{-1}g(u)(\mathcal{M}(u)x_ \xi)_{\xi}.$ Here $\epsilon$ is a small parameter accounting for the time scale. Thus finally we obtain a system of ODEs for $X_i$ and $u_i.$  The undelying  ODE system takes the form
\begin{eqnarray}
\nonumber
&&\frac{dt}{d\tau}  = g(u),\\
\label{neq2}
&& u_{\tau}- x_{\tau} u_x = g(u)
\begin{pmatrix} u_{xx} + \frac{\lambda}{(1-u)^2 (1+ \alpha \int_0 ^1 \frac{1}{1-u} dx)^2} \end{pmatrix},\\
\nonumber
&& -x_{ \tau \xi \xi}= \frac{g(u)}{\epsilon} (\mathcal{M}(u) x_ {\xi})_{\xi}.
\end{eqnarray}

We  apply  a discretization in space to  derive
\begin{eqnarray*}
u_x(X_i, \tau) \simeq \Delta _x u_i(\tau) &:=&- \frac{u_{i+1}(\tau)- u _{i-1}(\tau)}{X_{i+1}(\tau)- X _{i-1}(\tau)},\\
u_{xx}(X_i, \tau)\simeq \Delta^2 _x u_i(\tau)&:=&
\left(
 \frac{u_{i+1}(\tau)-u_{i}(\tau)}{X_{i+1}(\tau)-X_{i}(\tau)}- \frac{u_{i}(\tau)-u_{i-1}(\tau)}{X_{i}(\tau)-X_{i-1}(\tau)}
 \right)\frac{2} {X_{i+1}(\tau) - X_{i-1}(\tau)}
 ,\\
 x_{\xi \xi}(\xi _i, \tau) \simeq \Delta^2 _{\xi} x_i(\tau)&:=& \frac{X_{i+1}(\tau)-2X_i(\tau)+ X_{i-1}(\tau)}{\delta \xi ^2},\\
(\mathcal{M} (u) x_ {\xi})_{\xi} \simeq \Delta_{\xi} (\mathcal{M}  \Delta_{\xi} x) &:=& - \left(
 \frac{\mathcal{M} _{i+1}+\mathcal{M} _{i}}{2} \frac{x_{i+1}-x_i}{\Delta \xi}- \frac{\mathcal{M} _{i}+\mathcal{M} _{i-1}}{2} \frac{x_i -x_{i-1}}{\Delta \xi}
 \right) \frac{1}{\Delta \xi}.
\end{eqnarray*}
Notably  at the boundary point $X_M=1$ the discretized boundary condition $u_M=u_{M-1}-\beta u_M\left( X_M-X_{M-1}\right)$ has been  used.

The preceding spatial discretization leads to an ODE system of the form
\bge\label{rnc2}
A(\tau, y) \frac{dy}{d \tau}= b (\tau,y),
\ege
with the vector $y \in \mathbb{R}^{2n+1}$ defined as
 $$y = (t(\tau ), u_1(\tau ), u_2(\tau ), . . . , u_M(\tau ),X_1(\tau ),X_2(\tau ), . . . ,X_M(\tau )) ,
= (t(\tau ),\, \textbf{u},\, \textbf{X}), \textbf{u, X} \in  \mathbb{R} ^M,$$
and  $A \in \mathbb{R}^{{2n+1},{2n+1}} .$  System \eqref{rnc2}  has the block form
$$A=\begin{bmatrix} 1 & 0 & 0 \\ 0 & I & -\Delta_xu   \\
 0 &0 & -\Delta_{\xi} ^2\end{bmatrix} , \quad y=\begin{bmatrix}
 t(\tau) \\ u \\ X
 \end{bmatrix}, \quad b=g(u) \begin{bmatrix}
 1 \\ \Delta_x^2 u + \lambda \frac{1}{(1-u)^2 (1+\alpha \textbf{\textit{I}}(u))^2} \\ \Delta _{\xi}( \mathcal{M}  \Delta_{\xi} x)
 \end{bmatrix},
$$
where $\textbf{\textit{I}}(u)$ is an approximation of the integral $\int_0 ^1 \frac{1}{1-u} dx,$ using  Simpsons' method.
For the solution of \eqref{rnc2}  a standard ODE solver, such as the matlab function ``ode15i",  can be  used.

\paragraph{\bf The Local Problem} Initially we present a simulation for the local problem,  (\ref{o.onel}), i.e. problem  (\ref{o.oneN1})  for $\alpha=0$.
 In Figure \ref{figsim1} some numerical experiments presented for the case where a global-in-time solution exists.
In the first of these graphs (top left) we plot the solution against space and time. In the second one (top right) we plot the moving mesh $X(i,t)$ against time, while in the third (bottom left)  a sequence of profiles of the solution ($u(x,t_i)$) for various time steps $t_i$ is presented. Finally in the fourth graph we plot the maximum of the solution $u(0,t)$ against time. The latter plot shows the convergence  towards a steady state. The initial condition here, as well as in the rest of the simulations, was taken to be zero, $u_0(x)=0$.
Also the parameters used here were $\lambda=0.05$,  $\beta=1$, $t\in [0,T_f]$, $T_f=40$, $M=141$.
\begin{figure}[h]\vspace{5cm}
  \centering
 \includegraphics[ bb= 320 10 100 100, scale=.8]{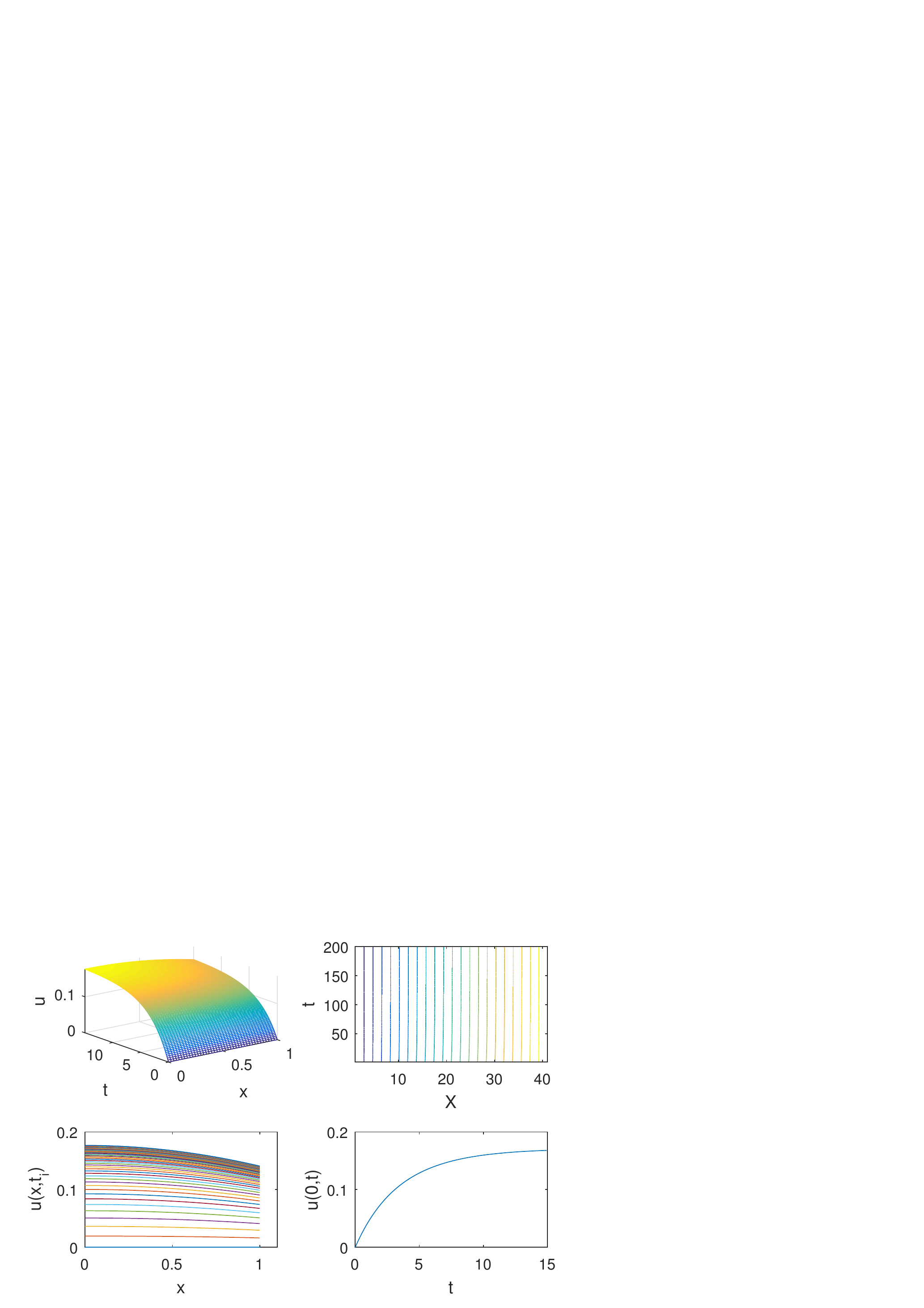}
 \caption{Form of the solution and various profiles of the local problem  for $\lambda=0.05$, $\beta=1$}.
  \label{figsim1}
\end{figure}
 Figure \ref{figsim2} depicts the situation where the solution quenches in finite time. Again in the first of these graphs (top left) we plot the solution against space and time. In the second one (top right) we plot the moving mesh $X(i,t)$ against time. Here the motion of $X_i$'s captures the observed singularity, i.e. the finite-time quenching. In the third (bottom left) a sequence of profiles of the solution ($u(x,t_i)$) for various time steps $t_i$ is presented. We can observe the increasing with time  profiles of the solution.  Finally in the fourth graph we plot the maximum of the solution $u(0,t)$ against time from which the quenching behaviour is revealed. The same parameters  as in Figure \ref{figsim1} are used  but with $\lambda=1$.
\begin{figure}[h]\vspace{4.5cm}
  \centering
  \includegraphics[ bb= 320 10 100 100, scale=.8]{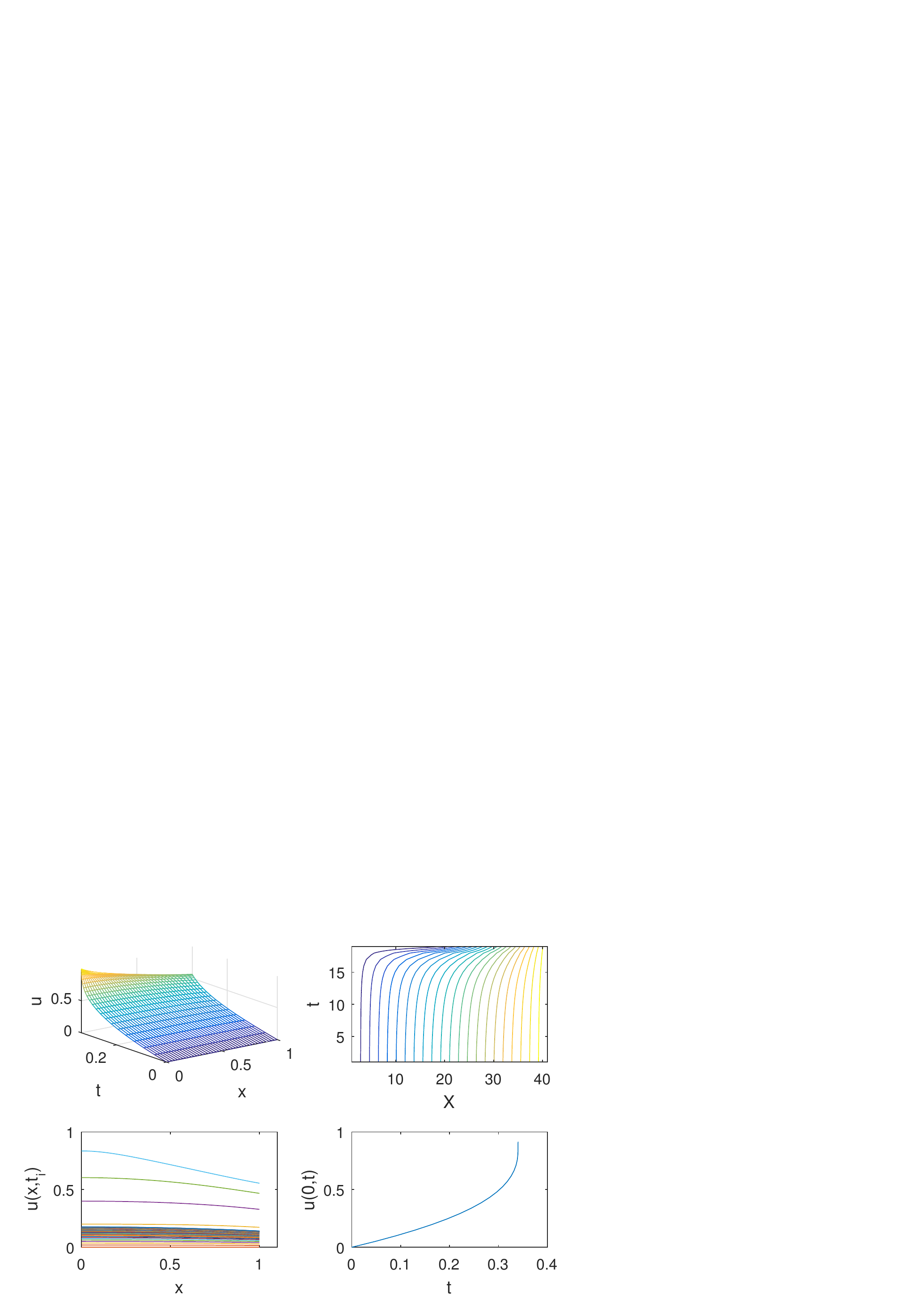}
 \caption{Form of the solution and various profiles of the local problem in the case of quenching for
 $\lambda=1$, and $\beta=1$}.
  \label{figsim2}
\end{figure}
In the next Figure, \ref{figsim3}  we plot the profiles of the solution maximum, $u(0,t)$ against time, for various $\lambda$'s and specifically for $\lambda=.7,\,.8,\,.9,\,1$. We observe that by increasing the value of the parameter $\lambda$  the quenching time decreases as it is expected.
\begin{figure}[h]\vspace{-8cm}
  \centering
 \includegraphics[width=.8\textwidth]{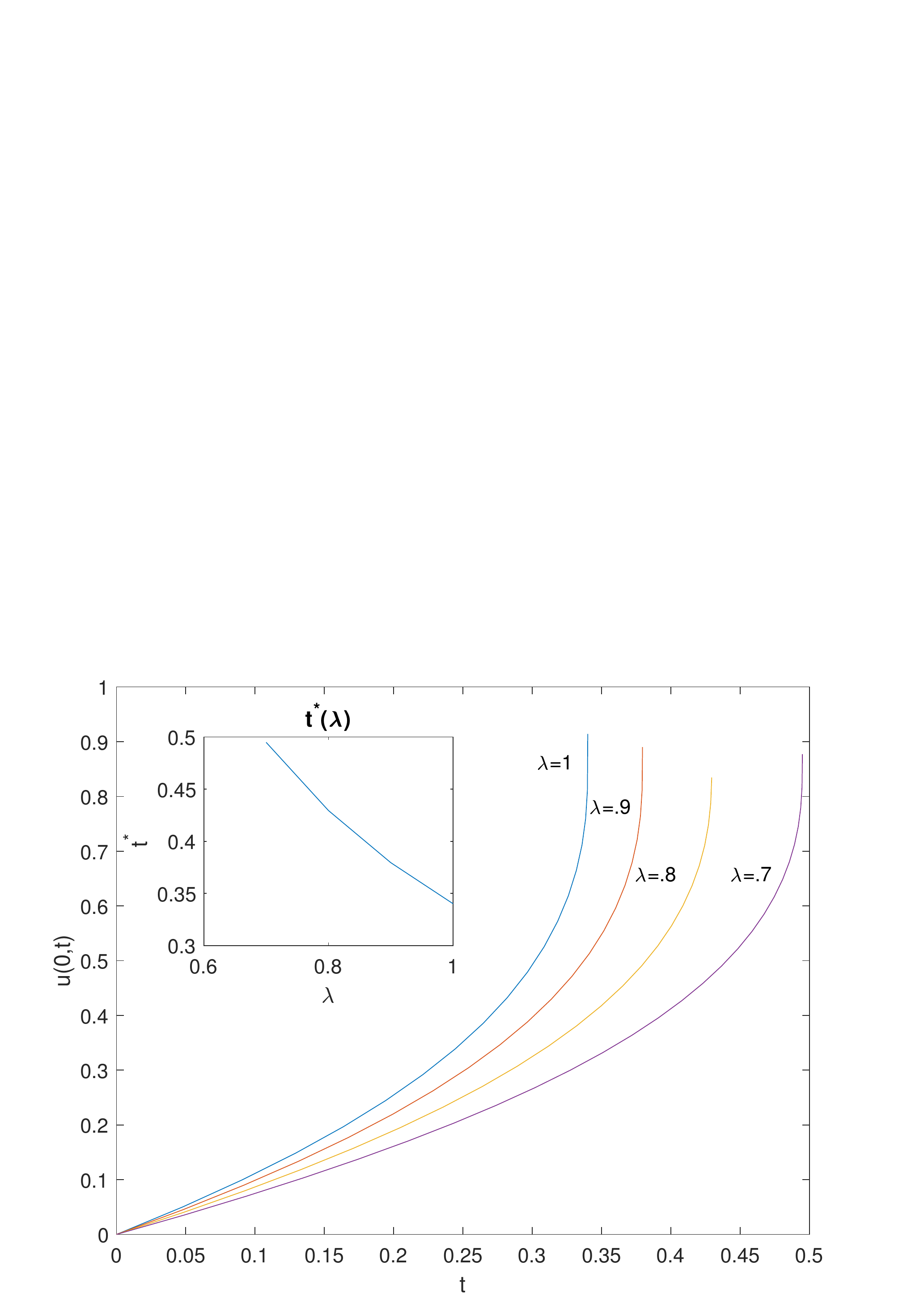}
 \caption{Form of the solution maximum against time for  various values of the parameter $\lambda$ for the local problem for $\beta=1$}.
  \label{figsim3}
\end{figure}

\paragraph{\bf The Non-Local Problem} A similar set of simulations is presented for the case that
$\alpha=1$
 while the rest of the parameters, unless otherwise stated, are kept the same as in the experiment of Figure \ref{figsim1}.
 In Figure \ref{figsim4} and for $\lambda=0.5$ the convergence of the solution towards a steady state is depicted.
\begin{figure}[h]\vspace{6cm}
  \centering
 \includegraphics[ bb= 320 10 100 100, scale=.8]{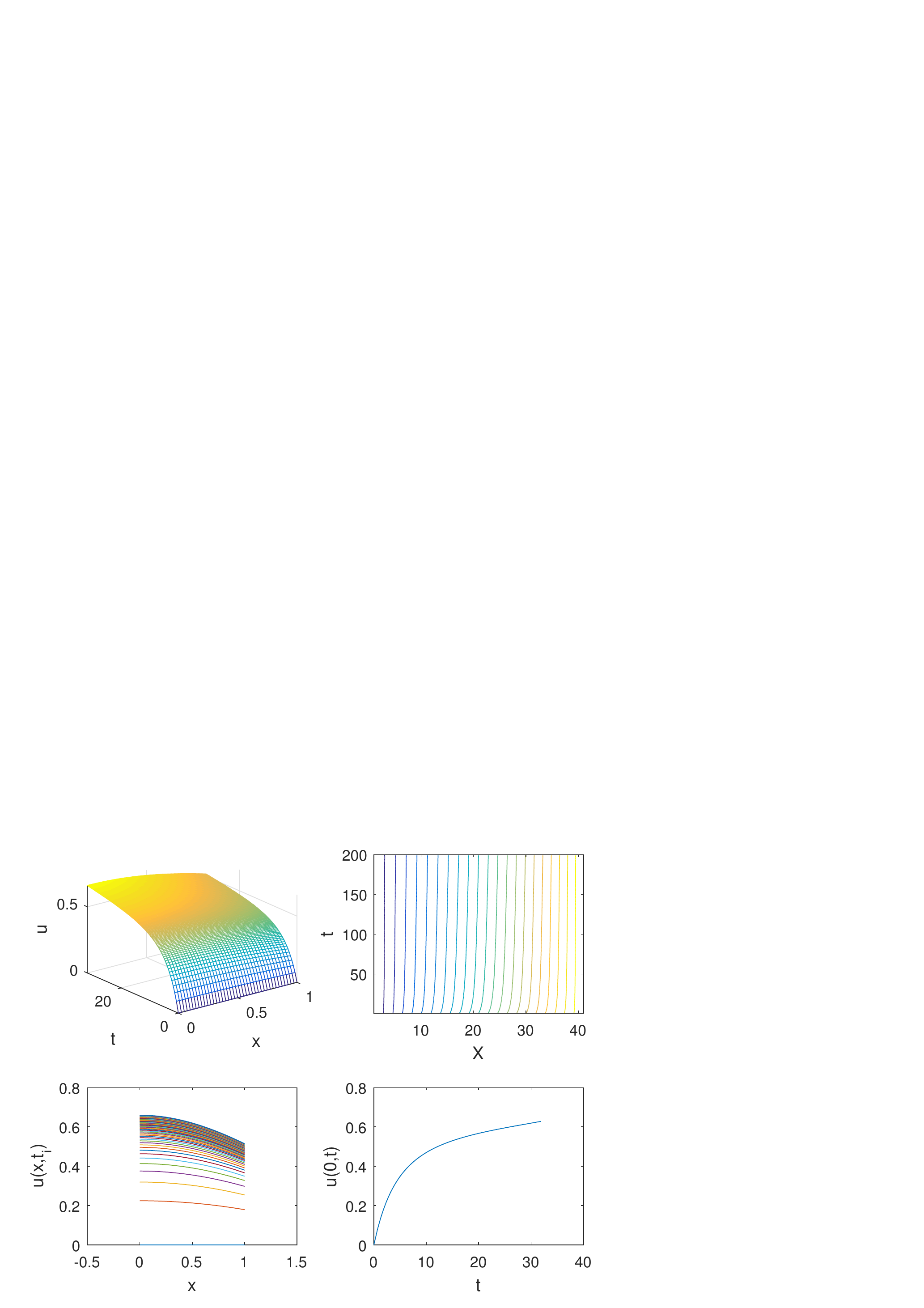}
 \caption{Form of the solution and various profiles of the nonlocal problem,  for
   $\la=0.5$, $\alpha=1$, $\beta=1$}.
  \label{figsim4}
\end{figure}
 In a similar set of graphs, see Figure \ref{figsim5} and for $\lambda=3$, we present
 the quenching behaviour of the solution.
\begin{figure}[h]\vspace{5cm}
  \centering
   \includegraphics[ bb= 320 10 100 100, scale=.8]{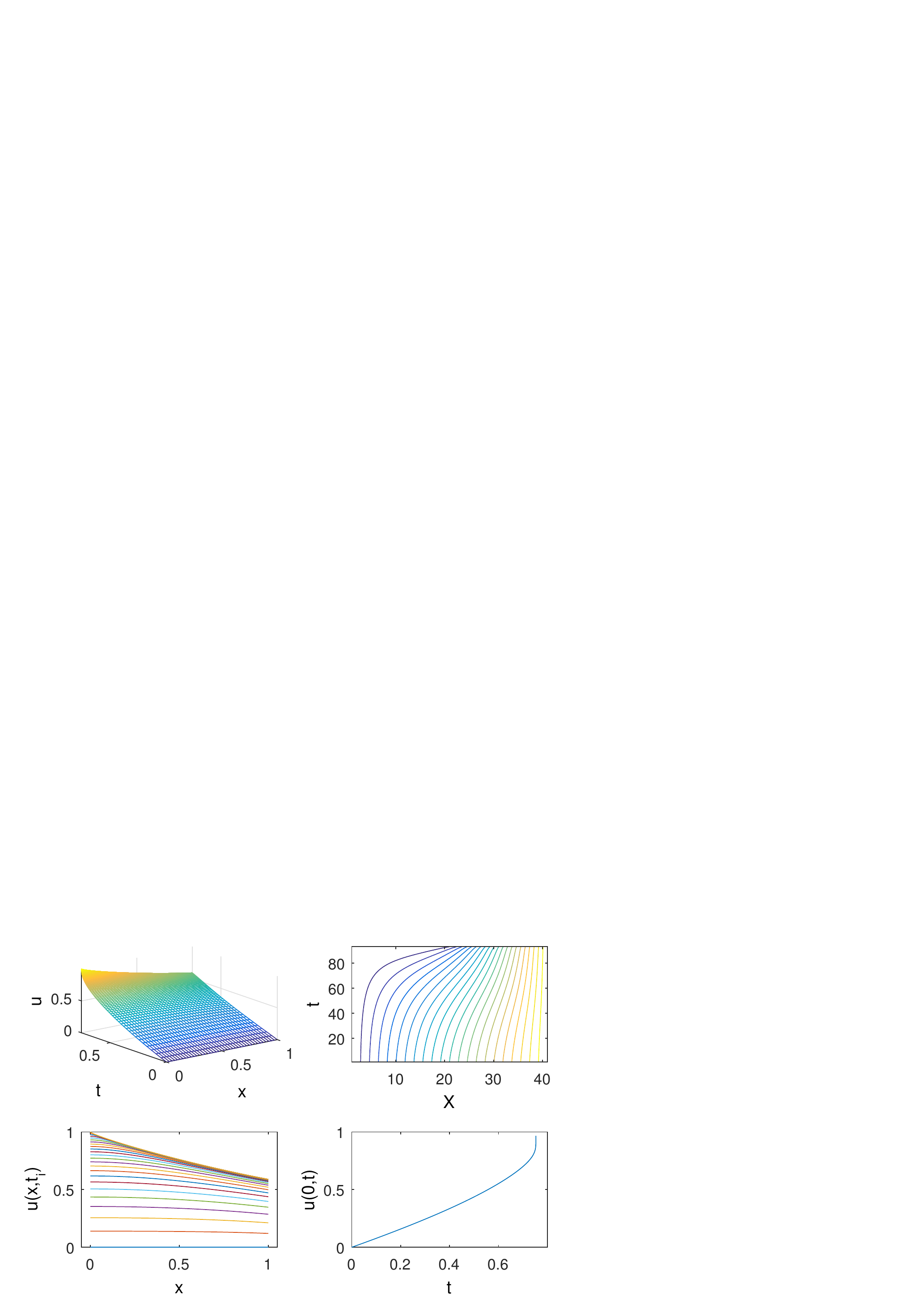}
 \caption{Form of the solution and various profiles of the nonlocal problem
  for  $\la=3$, $\alpha=1$,  $\beta=1$}.
  \label{figsim5}
\end{figure}
Moreover in Figure \ref{figsim6} we can observe the evolution of the quenching time
as  the value of the parameter $\lambda$ varies, something cannot be seen via our theoretical results. In particular,by  increasing the parameter $\lambda$ results in a decreasing  of quenching time. Here $\lambda=2.5,\,3,\,3.5,\,4.$
\begin{figure}[h]\vspace{-8cm}
  \centering
  \includegraphics[width=0.8\textwidth]{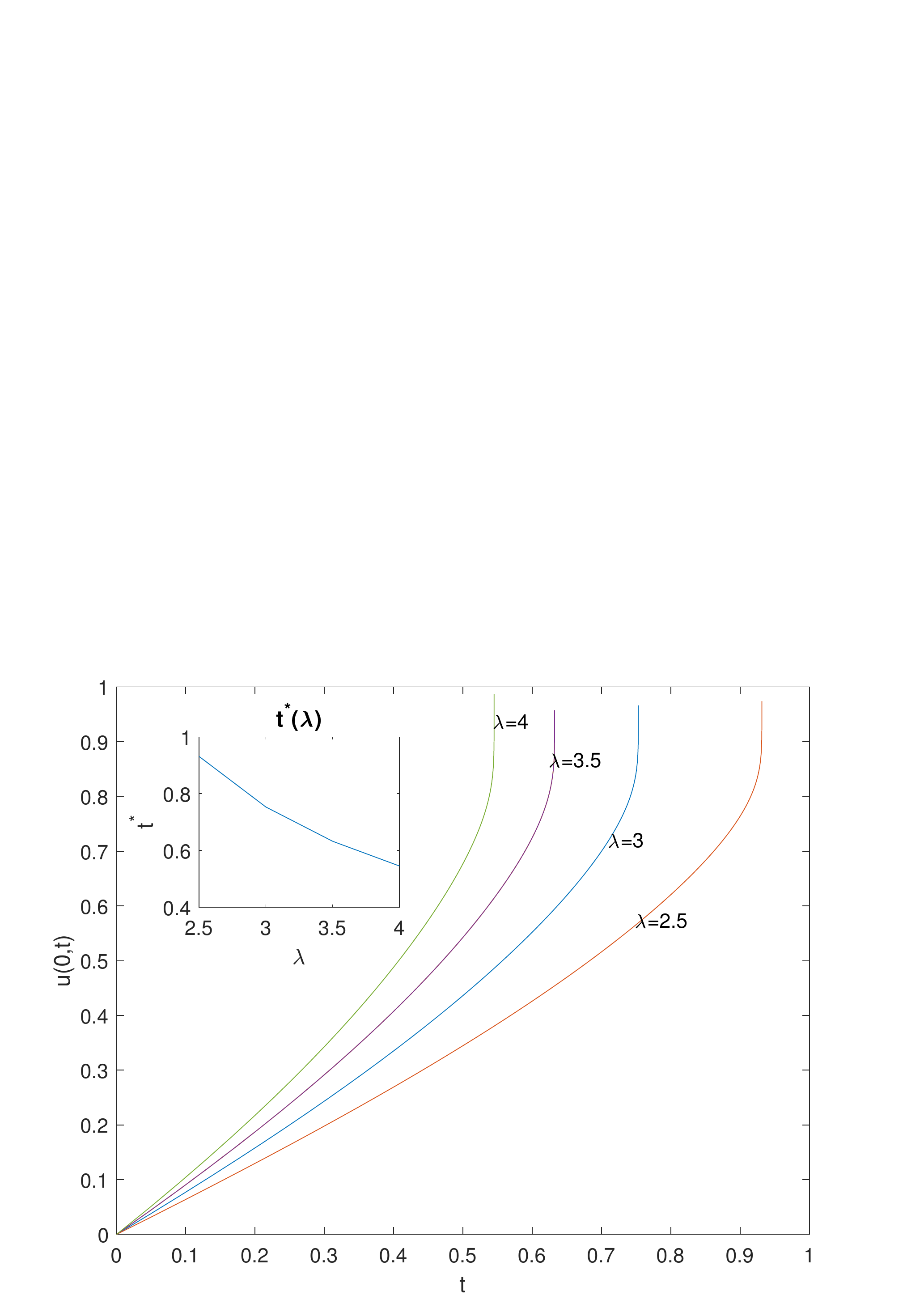}
 \caption{Form of the solution maximum against time for  various values of the parameter $\lambda$
  and with  $\alpha=1$, $\beta=1$}.
  \label{figsim6}
\end{figure}
 Next in Figure  \ref{figsim7}(a) we plot a series of profiles for the maximum of the solution as the parameter $\alpha$ varies. Again such a behaviour cannot be unveiled via our analystical results in subsections \ref{rnc4} and \ref{rnc3}. It is easily seen that by decreasing $\alpha$ the quenching time decreases too. The parameter $\alpha$ decreases from $1$ to the value $0$ whilst the parameter $\lambda$ is kept constant and equal to $\lambda=2$.
 
\begin{figure}[htb]\vspace{-8cm}
   \begin{minipage}{0.48\textwidth}
     \centering
     \includegraphics[width=.9\linewidth]{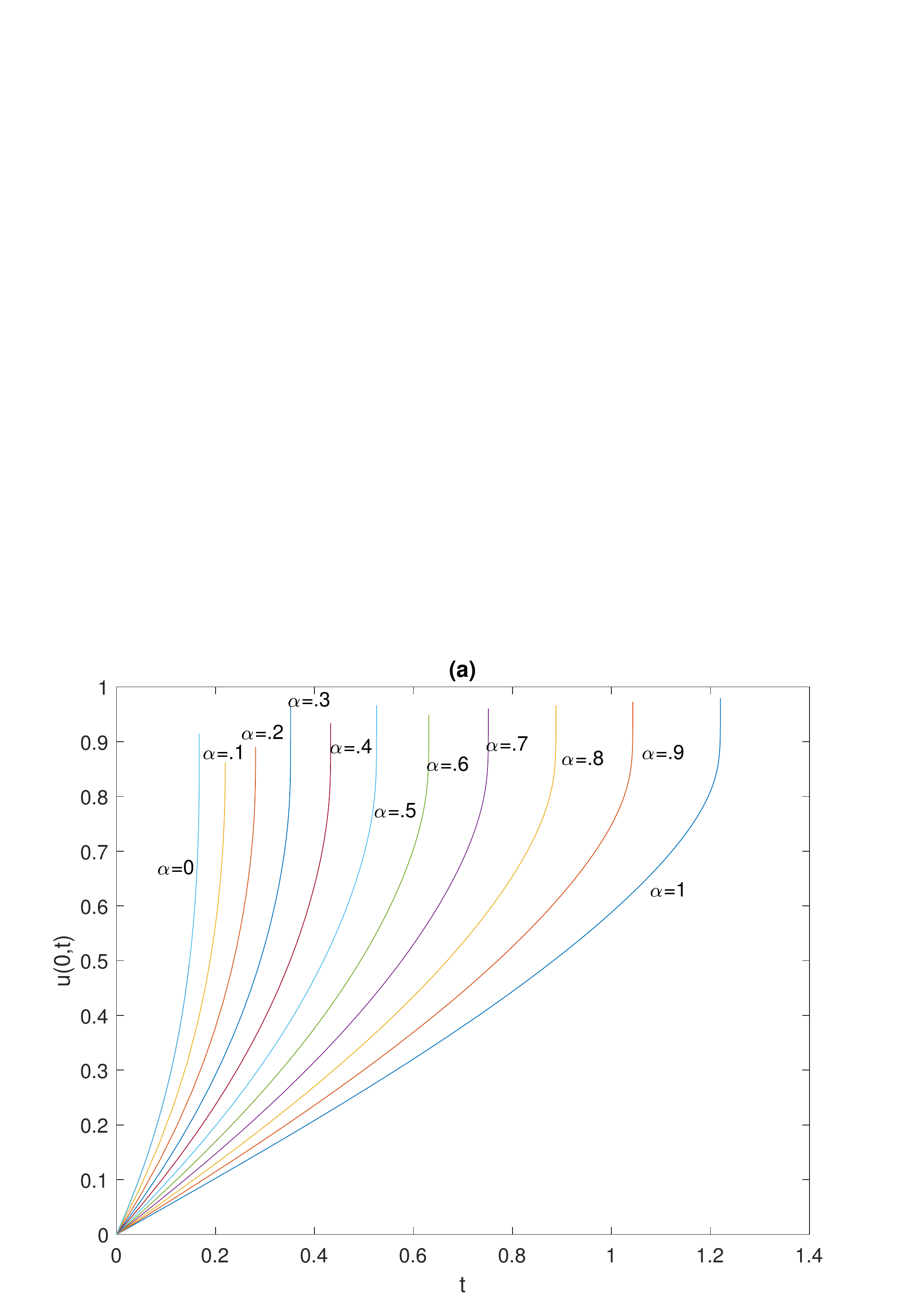}
   \end{minipage}
   \begin{minipage}{0.48\textwidth}
     \centering
     \includegraphics[width=1.3\linewidth]{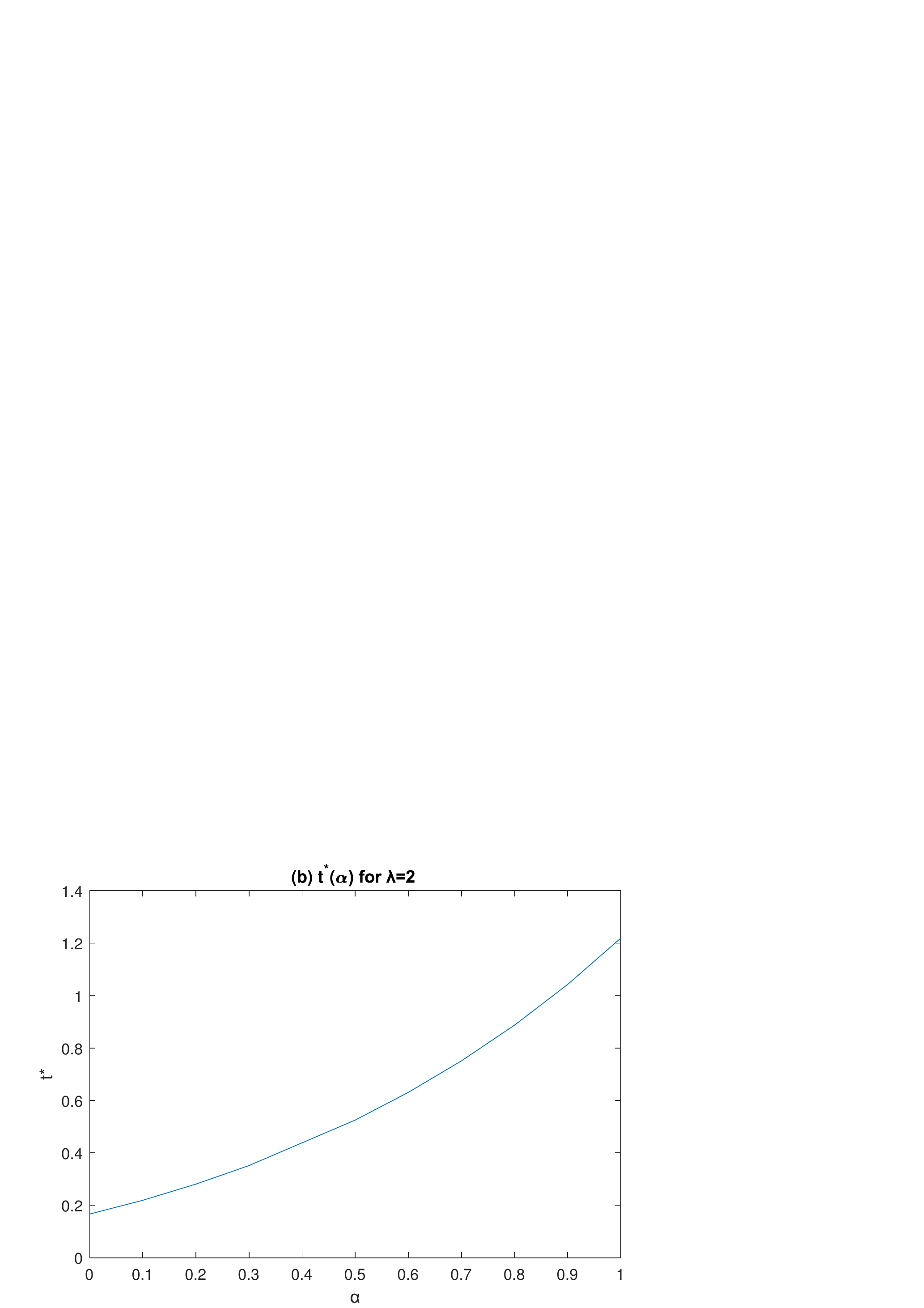}\vspace{4.5cm}
   \end{minipage} \vspace{-4cm}
   \caption{
   (a)  Form of the solution maximum against time for  various values of the parameter $\alpha$ for $\lambda=2$
   and $\beta=1$.
 (b)   Variation of the quenching time of the nonlocal problem  with respect to the parameter $\alpha$.
 }\label{figsim7}
\end{figure}
The effect of the boundary parameter $\beta$ is unveiled  by Figure  \ref{figsim8}(a), a fact cannot be easily seen by our theoretical results in section \ref{rnc5}. Indeed, it is seen that by increasing $\beta$ a long-time behaviour resembles the one of the Dirichlet problem is derived.
 The variation of the quenching time $t^*$ of the nonlocal problem is depicted in a series of plots in Figures \ref{figsim7}(b) and
 \ref{figsim8}(b). In the first of them, Figure \ref{figsim7}(b), we present a plot of $t^*(\alpha)$ while in the second \ref{figsim8}(b),  a plot of $t^*(\beta)$. In both cases was taken $\lambda=2$.
\begin{figure}[htb]\vspace{-8cm}
   \begin{minipage}{0.48\textwidth}
     \centering
     \includegraphics[width=1.2\linewidth]{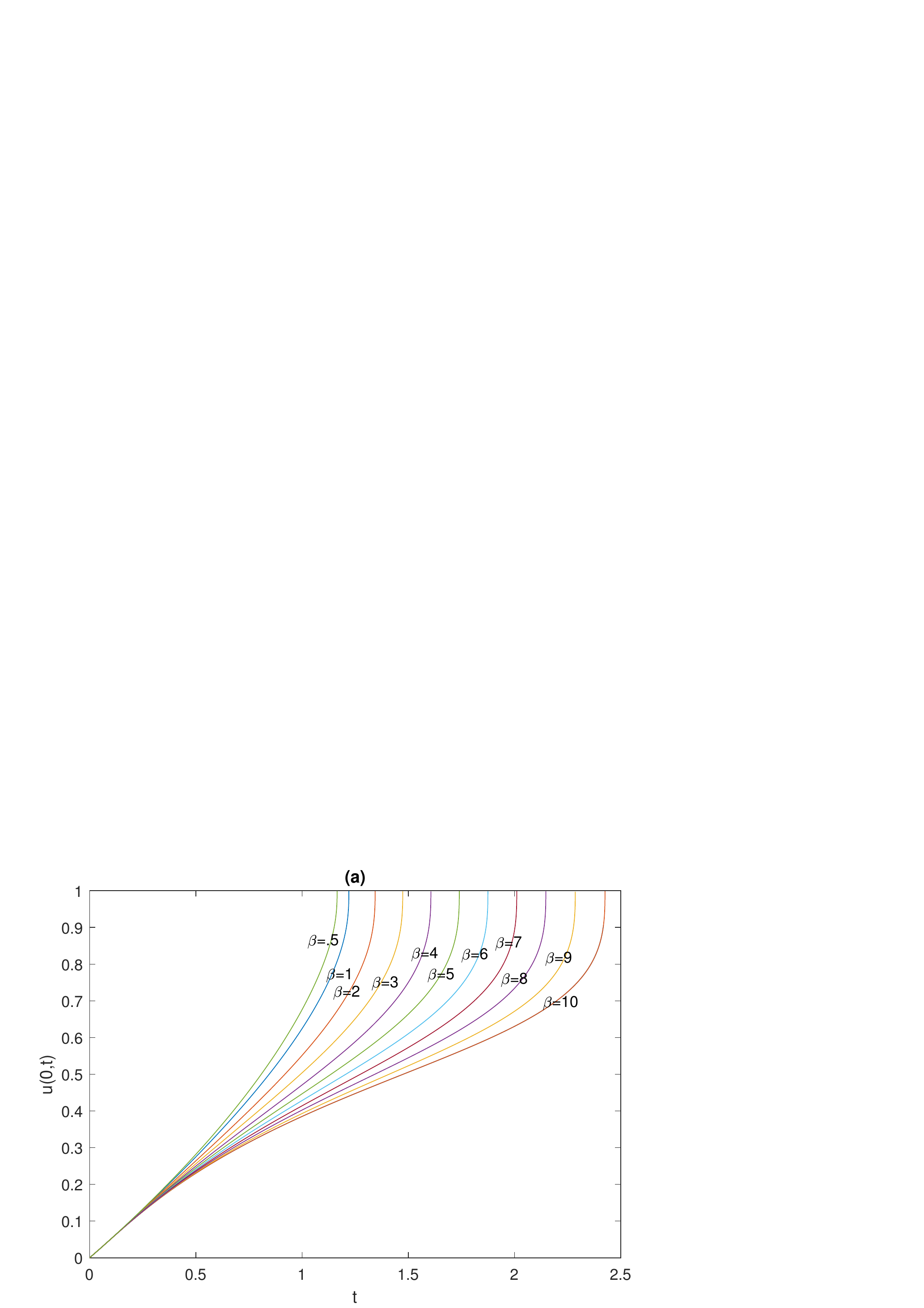}\vspace{1cm}
   \end{minipage}
   \begin{minipage}{0.48\textwidth}
     \centering
     \includegraphics[width=1.2\linewidth]{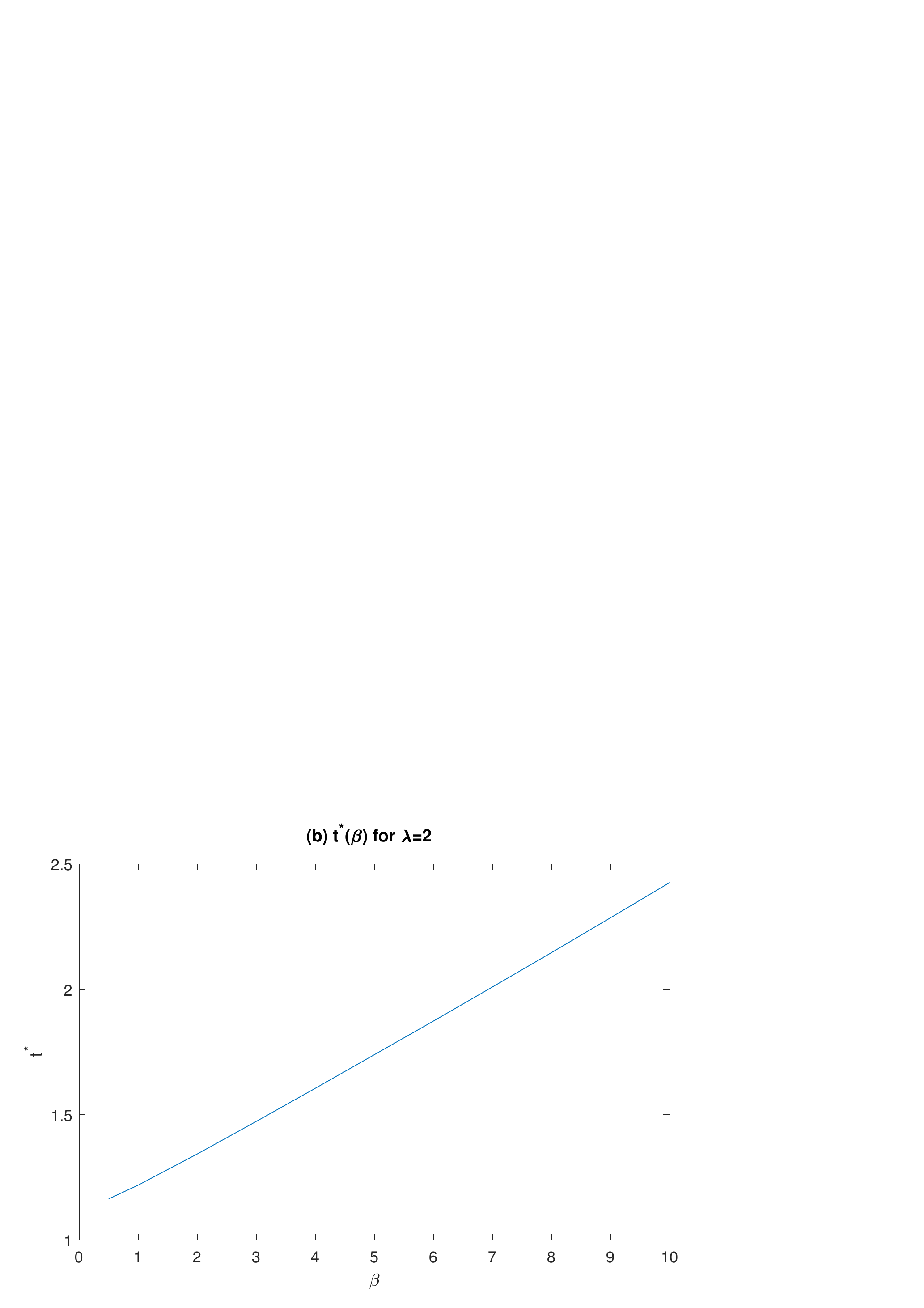}
   \end{minipage}
   \caption{(a) Form of the solution maximum against time for  various values of the parameter $\beta$
  for $\lambda=2$  and $\alpha=1$. (b)
  Variation of the quenching time  of the nonlocal problem with respect to the parameter $\beta$.}\label{figsim8}
\end{figure}
\subsection{The Radial Symmetric Case} It has been already pointed out that the  $2-$dimensional problem in the radially symmetric case is very interesting from the point of view of applications and thus we choose to provide a numerical treatment for it in  the current subsection. For this purpose the   aforementioned
adaptive  numerical scheme and specifically equation (\ref{neq2}) can be modified accordingly with
 $ u_{xx}+(N-1) r^{-1} u_x$  used in place of $u_{xx}$.

Initially we solve the local problem, i.e. problem  \eqref{39} for $\alpha=0$ and the results are presented in Figure
\ref{figrad1}. Here we take  $\beta=1$, $\lambda=0.05$ and we observe that the solution converges towards a steady state.
 \begin{figure}[h]
  \centering
  \vspace{6cm}
 \includegraphics[bb= 320 10 100 100, scale=.8]{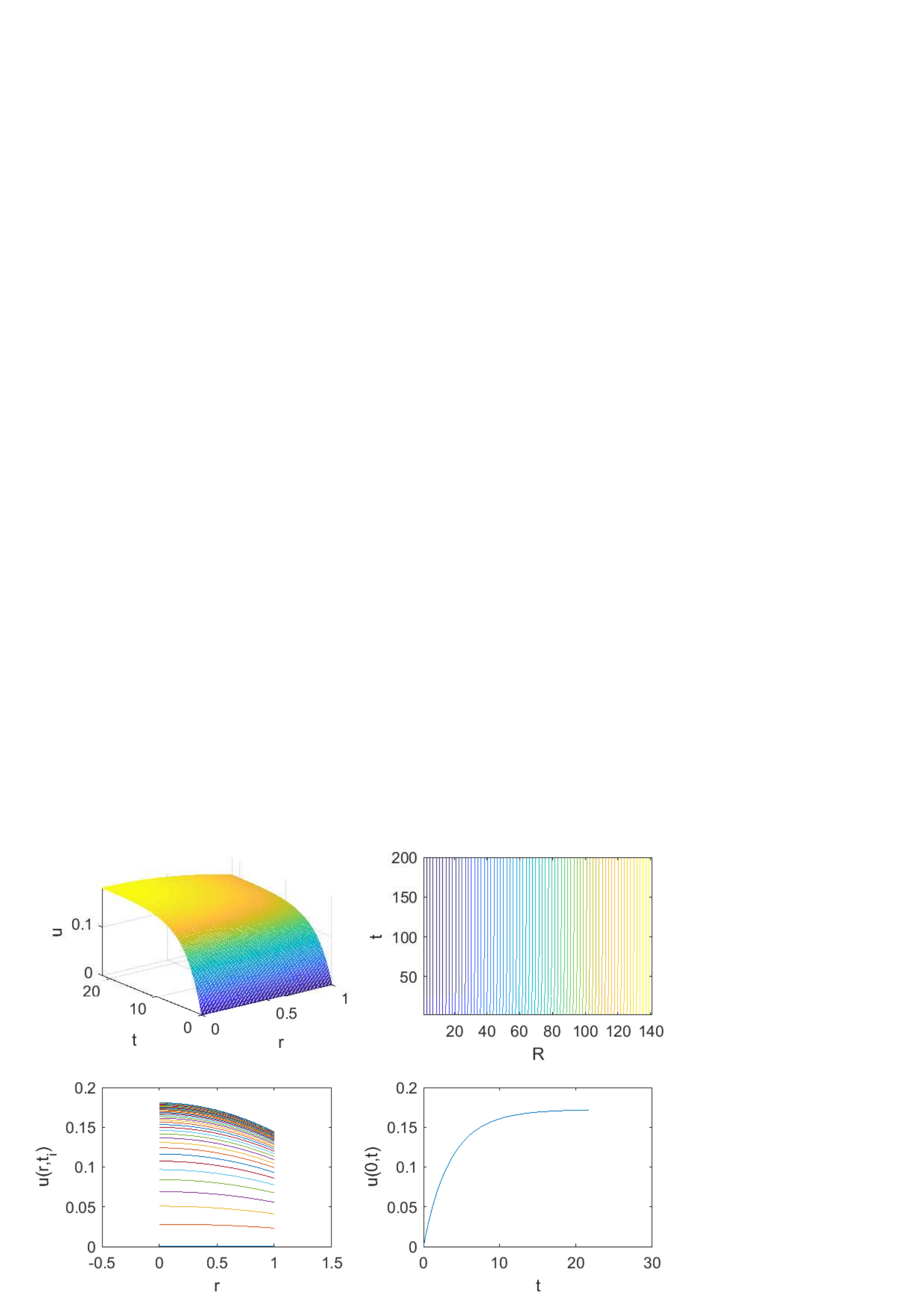}
 \caption{Form of the solution and various profiles of the local problem for the radial symmetric case
 for $\lambda=0.05$ and $\alpha=0$, $\beta=1$.}
  \label{figrad1}
\end{figure}
 In Figure \ref{figrad2} we present an analogous  simulation for the nonlocal problem.  In that case we take
 $\alpha=1, \beta=1$, and $\lambda=0.2$ and we derive that the solution quenches in finite time.
 \begin{figure}[h]
  \centering
 \includegraphics[bb= 300 230 250 580, scale=.8]{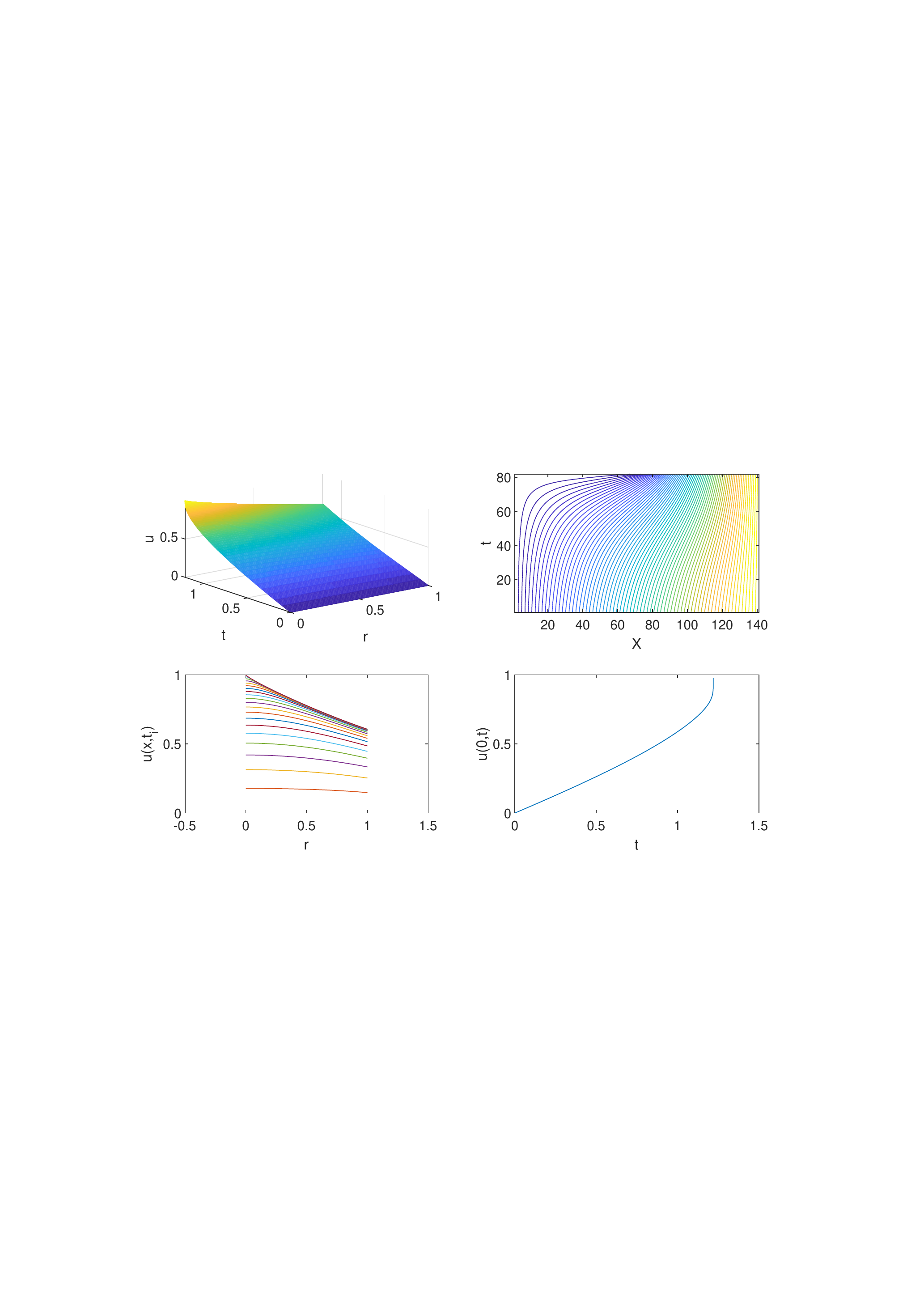}
 \vspace{-1.5cm}
 \caption{Form of the solution and various profiles of the nonlocal problem for the radial symmetric case for $\lambda=0.2$ and $\alpha=1$, $\beta=1$.}
  \label{figrad2}
\end{figure}

\section{Discussion}\label{dsc}

  In the current work we investigate a nonlocal parabolic problem with Robin boundary conditions associated with the operation of some idealized MEMS device. In the first part we deliver  a thorough  investigation of  the  associated steady-state problem
 and we derive some estimates of the {\it pull-in voltage},  which is the controlling parameter of the model. In particular, and for the  $N-$dimensional case , $N>1, $  in order to derive sharp estimates for the {\it pull-in voltage}  we had to show,  as  a very interesting  by-product,
 a Poho\v{z}aev's type identity for  Robin boundary conditions.  To the best of our knowledge such a result  has not been available in the literature.

 In the second part of this work, existence and  uniqueness results  together with long time behaviour of  time-dependent problem are discussed. In particular, we focus  on the investigation of  the phenomenon of  quenching  (i.e. the so called {\it touching down} in the context of MEMS literature). We first examine the quenching behaviour on a general domain, whilst later  in order to derive  an optimal quenching result we restrict ourselves to the radially symmetric case.
	
   Finally we close our investigation by the implementation of an adaptive numerical method, \cite{Budd}, for the solution of the time-dependent problem. We actually perform  a series of numerical experiments verifying the obtained analytical results as well as revealing qualitative features of nonlocal problem \eqref{o.oneN1} do not arise from our analytical approach. Additionally, some further numerical experiments are performed to determine the quenching profile of the solution in the radially symmetric case.
   
\section*{Acknowledgments}
The authors would like to thank the anonymous referees for the carefull reading of the manuscript. Actually, their fruitful comments and suggestions  improved substantially the final form of this work.




\begin{thebibliography}{1}

\bibitem{Ac69} R.C. Ackerberg, {\it On a nonlinear differential equation of electrohydrodynamics}, Proc. Roy. Soc. A \textbf{312} (1969) 129--140.

\bibitem{AKH11} M. Al-Refai, N.I. Kavallaris, M. A. Hajji, {\it Monotone iterative sequences for nonlocal elliptic problems}, Euro. Jnl. Appl. Mathematics \textbf{22(6)}, 533--552.


\bibitem{Am76} H. Amann, Fixed point equations and nonlinear eigenvalue problems in ordered Banach
spaces, SIAM Rev. \textbf{18} (1976), 620–-709.

 \bibitem{b80} C. Bandle, {\it Isoperimetric inequalities and applications}, Monographs and  Studies in Mathematics, 7., Pitman, Boston-London, 1980.

\bibitem{Budd} C. J. Budd, J. F. Williams,
{\it How to adaptively resolve evolutionary singularities
in differential equations with symmetry},
J. Eng. Math. {\textbf 66} (2010) 217--236.

\bibitem{CD99} E.K. Chan \& R.W. Dutton, \textit{Effects of Capacitors, Resistors and Residual Change on the Static and Dynamic Performance of Electrostatically Actuated Devices}, Proceedings of SPIE, {\textbf 3680}, (1999), 120--130.

\bibitem {DKN20} O. Drosinou, N.I. Kavallaris and C.V. Nikolopoulos, {\em  Impacts of noise on quenching of some models arising in MEMS technology},  {\bf arXiv:2012.10922v1}.

\bibitem{DZ19}  G. K. Duong \& H. Zaag, {\it Profile of a touch-down solution to a nonlocal MEMS model}, Math. Models Methods Appl. Sci. {\bf  29 (7)} (2019) 1279--1348.

\bibitem{EGG10}
P. Esposito, N. Ghoussoub, Y. Guo,
{\it Mathematical analysis of partial differential equations
modeling electrostatic MEMS},
Courant Lecture Notes in Mathematics, 20. Courant Institute of Mathematical Sciences,
New York, American Mathematical Society, Providence, RI, 2010.

\bibitem{Ev}
L. C. Evans, {\it Partial Differential Equations}, Second Edition, American Mathematical Society, 2010.

\bibitem{FG93} S. Filippas \& J-S.  Guo, {\it Quenching profiles for one-dimensional semilinear heat equations}, Quart. Appl. Math. {\bf  51}  (1993) 713--729.

\bibitem{FM} A. Friedman, B. McLeod,
{\it Blow-up of positive solutions of semilinear heat equations},
Indiana Univ. Math. J. \textbf{34} (1985) 425--447.

\bibitem{Gi-Ni-Ni}
Gidas, B., Ni, Wei Ming \&  Nirenberg, L.,
{\it Symmetry and related properties via the maximum principle,}
Comm. Math. Phys. \textbf{68} (1979), no. 3, 209--243.

\bibitem{G10}  Y. Guo,  {\em Dynamical solutions of singular wave equations modeling electrostatic MEMS}, SIAM
J. Appl. Dyn. Syst., {\bf 9} (2010), 1135--1163.

\bibitem{Guo91} J-S. Guo,
{\it On a quenching problem with the Robin boundary condition},
Nonlinear Analysis: Theory, Methods \& Applications,
\textbf{17}9, (1991), 803--809.


\bibitem{G14} J.-S. Guo, {\it Recent developments on a nonlocal problem arising in the micro-electromechanical system}, Tamkang Jour. Mathematics \textbf{45(3)}, (2014), 229--241.

\bibitem{GH18}  J.-S. Guo \&  B. Hu {\it Quenching rate for a nonlocal problem arising in the micro-electro mechanical system},  J. Differential Equations \textbf{264} (2018), no. 5, 3285--3311.

\bibitem{GHW08} J.-S. Guo, B. Hu \& C.-J. Wang,
{\it A nonlocal quenching problem arising in micro-electro mechanical systems}, Quart. Appl. Math. \textbf{67} (2009) 725--734.


\bibitem{GN12}
J.-S. Guo \& N.I. Kavallaris,
\textit{On a nonlocal parabolic problem arising in electromechanical MEMS control. }
Disc. Cont. Dynam. Systems \textbf{32} (2012) 1723--1746.

\bibitem{GKWY20}
J.-S. Guo,  N.I. Kavallaris,  C.-Y. Yu \&  C.-Y. Yu  \textit{Bifurcation diagram of a Robin boundary value problem arising in MEMS},  {\bf arXiv:2007.03977v1.}

\bibitem{G08} Y. Guo,
{\it On the partial differential equations of electrostatic MEMS devices III: refined
touchdown behavior},
J. Diff. Eqns. \textbf{244} (2008) 2277--2309.


\bibitem{YG-ZP-MJW06}
Y. Guo, Z. Pan, M.J. Ward,
{\it Touchdown and pull-in voltage behavior of a MEMS device
with varying dielectric properties},
SIAM J.Appl. Math. \textbf{166} (2006) 309--338.

\bibitem{H11} K-M. Hui,
{\it The existence and dynamic properties of a parabolic nonlocal MEMS equation}, Nonlinear Analysis: Theory, Methods \& Applications \textbf{74} (2011) 298--316.

\bibitem{KThesis2000}  N.I. Kavallaris,
Blow-up and global existence of solutions of some nonlocal problems arising in Ohmic heating process,
Ph.D Thesis, National Technical University of Athens (2000) (in Greek).

\bibitem{NK04} N. I. Kavallaris, Asymptotic behaviour and blow-up for a nonlinear diffusion problem
with a nonlocal source term, Proc. Edinb. Math. Soc. {\bf 47}, (2004) 375–-395.

\bibitem{KN07} N.I. Kavallaris \& T. Nadzieja,
{\it On the blow-up of the nonlocal thermistor problem},
Proc. Edin. Math. Soc. \textbf{50}, (2007), 389--409.

\bibitem{KMS08} N.I. Kavallaris, T. Miyasita, T. Suzuki,
Touchdown and related problems in electrostatic MEMS device equation,
Nonlinear Diff. Eqns. Appl. \textbf{15} (2008), 363--385.

\bibitem{KLNT11} N. I. Kavallaris, A. A. Lacey, C. V. Nikolopoulos, D. E. Tzanetis,
A hyperbolic nonlocal problem modelling MEMS technology,
Rocky Mountain J. Math. \textbf{41} (2011), 505--534.

\bibitem{KLNT15} N. I. Kavallaris, A. A. Lacey, C. V. Nikolopoulos, D. E. Tzanetis,
On the quenching behaviour of a semilinear wave equation modelling MEMS technology,
Discrete and Continuous Dynamical Systems - Series A, \textbf{35}(3), (2015),  1009-1037.

\bibitem{KLN16} N. I. Kavallaris, A. A. Lacey, C. V. Nikolopoulos,
On the quenching of a nonlocal parabolic problem arising in electrostatic MEMS control,
Nonlinear Analysis, \textbf{ 138}, (2016),  189--206.

\bibitem{K16}  N. I. Kavallaris,
{\em Quenching solutions of a stochastic parabolic problem arising in electrostatic MEMS control},
Math. Methods Appl. Sci. 41 (2018), no. \textbf{3}, 1074–1082.

\bibitem{KS18} N.I. Kavallaris \& T. Suzuki, {\it Non-Local Partial Differential Equations for Engineering and Biology:
Mathematical Modeling and Analysis, Mathematics for Industry}, {\bf Vol. 31} Springer Nature 2018.

\bibitem{lsu68}  O. Lady\v{z}enskaja, V.A. Solonnikov \& N.N. Ural'ceva, {\it Linear and Quasi-Linear Equations of Parabolic Type}, Amer. Math. Soc. Providence, R.I. 1968.

\bibitem{L1} A.A. Lacey, \textit{Thermal runaway in a nonlocal
problem modelling Ohmic heating: Part II : General proof of blow-up and
asymptotics of runaway}, Euro Jl. Appl. Maths. \textbf{6}, (1995), 201--224 .

\bibitem{L89} H.A. Levine, \textit{Quenching, nonquenching,
and beyond quenching for solution of some parabolic equations},
Ann. Mat. Pura Appl. \textbf{155} (1989), 243--260.

\bibitem{MZ97} F. Merle \& H. Zaag, {\it Reconnection of vortex with the boundary and finite time
quenching}, Nonlinearity \textbf{10} (1997) 1497--1550.

\bibitem{Pao92} C.V. Pao, \textit{Nonlinear Parabolic and Elliptic Equations}, Springer 1992.

\bibitem{JAP-DHB02}
J.A. Pelesko, D.H. Bernstein,
Modeling MEMS and NEMS,
Chapman Hall and CRC Press, 2002.

\bibitem{PC03}
J.A. Pelesko and X.Y.Chen, \textit{Electrostatic deflections of circular elastic
membranes}, J. Electrostatics \textbf{57} (2003), 1--12.


\bibitem{PT01a}
J.A. Pelesko and A.A. Triolo, \textit{Non-local problems in
MEMS device control}, J. Engrg. Math., \textbf{41} (2001), 345--366.

\bibitem{Pelesko2} J.A. Pelesko,
\textit{Mathematical Modeling of Electrostatic MEMS
with Taylored Dielectric Properties} SIAM Journal of Applied Mathematics,
\textbf{ 62}, 3 (2002) pp. 888--908.

\bibitem{Poh} S. I. Poho\v{z}aev, {\it On the eigenfunctions of the equation $\Delta u +\lambda f(u) = 0$}, Dokl. Akad. Nauk SSSR, \textbf{165} (1965), 36--39.

\bibitem{QS} Quittner, P. \& Souplet, P.
 {\it Superlinear parabolic problems. Blow-up, global existence \& steady states.}
 Birkh\"{a}user Adv. Texts Basler Lehrb\"{u}cher. Birkh\"{a}user 2007.

\bibitem{SC97a}
J.J. Seeger \& S.B. Crary, {\it Stabilization of electrostatically actuated mechanical devices}, Proceedings of the 1997 International Conference on Solid-State Sensors and Actuators, (1997), 1133--1336 .

\bibitem{SC97b}
J.J. Seeger \& S.B. Crary, {\it Analysis and simulation of MOS capacitor feedback for stabilizing  electrostatically actuated ,echanical devices}, Second International Conference on the Simulation and Design of Microsystems and Microstructures-MICROSIM97, (1997), 199--208.

\bibitem{T68} G.I. Taylor, {\it The coalescence of closely spaced drops when they are at different electric potentials},
Proc. Roy. Soc. A, \textbf{306} (1968) 423--434.

\bibitem{y} M. Younis,
MEMS Linear and Nonlinear Statics and Dynamics,
Springer, New York, 2011.

\end{thebibliography}
\end{document}